\newtheorem{theorem}{Theorem}
\definecolor{darkgreen}{rgb}{0,0.4,0}
\definecolor{BrickRed}{rgb}{0.65,0.08,0}
\def\hours{\n=\time \divide\n 60
	\m=-\n \multiply\m 60 \advance\m \time
	\twodigits\n:\twodigits\m}
\def\twodigits#1{\ifnum #1<10 0\fi \number#1}
\date{\today \quad
	\textit{Time}: \hours \quad 
	\textit{Version:} 33
	\\
	Research supported in part by an NSF grant (Focus Research Group DMS-1854398), and the American Institute of Mathematics.
}
\numberwithin{equation}{section}
\title{On correlation of the 3-fold divisor function with itself}
\author[D. T. Nguyen]{David T. Nguyen}
\address{Previous Addres: American Institute of Mathematics, 600 E. Brokaw Rd., San Jose, CA 95112, USA.
}
\email{dtn@aimath.org}
\address{Current Address: Department of Mathematics and Statistics, Queen's University, Jeffery Hall, 48 University Ave, Kingston, Ontario, K7L-3N6, Canada.}
\email{d.nguyen@queensu.ca}
\newtheorem{thm}{Theorem}
\newtheorem{cor}{Corollary}
\newtheorem{remark}{Remark}
\newtheorem{conjecture}{Conjecture}
\newtheorem{lem}{Lemma}
\newtheorem{prop}{Proposition}
\theoremstyle{definition}
\numberwithin{equation}{section}
\begin{document}

\maketitle

\begin{abstract}
	Let $\zeta^k(s) = \sum_{n=1}^\infty \tau_k(n) n^{-s}, \Re s > 1$.
	We present three conditional results on the ternary additive correlation sum $$\sum_{n\le X} \tau_3(n) \tau_3(n+h),\quad (h\ge 1),$$ and give numerical verifications of our method. The first is a conditional proof for the full main term of the above correlation sum for any composite shift $1 \le h \le X^{2/3}$, on assuming an averaged level of distribution for the three-fold divisor function $\tau_3(n)$ in arithmetic progressions to level two-thirds. The second is a conditional derivation for the leading order main term asymptotics of this correlation sum, also valid for any composite shift $1 \le h \le X^{2/3}$. The third result gives a complete expansion of the polynomial for the full main term for the special case $h=1$ from both our method and from the delta-method, showing that our answers match. 
	
	Our method is essentially elementary, especially for the $h=1$ case, uses congruences, and, as alluded to earlier, gives the same answer as in prior prediction of Conrey and Gonek \cite{ConreyGonnek2002} (Duke Math. J. \textbf{107} (3) pp. 577-604, 2002), previously computed by Ng and Thom \cite{NgThom2019} (Funct. Approx. Comment. Math. \textbf{60}(1): 97-142, 2019), and unpublished heuristic probabilistic arguments of Tao \cite{TaoBlog}. Our procedure is general and works to give the full main term with a power-saving error term for any correlations of the form $\sum_{n\le X} \tau_k(n) f(n+h)$, to any composite shift $h$, and for a wide class of arithmetic function $f(n)$.
\end{abstract}

\setcounter{tocdepth}{1}
\tableofcontents

\section{Introduction and statements of results}

For $k\ge 1$ let
\begin{equation}
	\zeta^k(s) = \sum_{n=1}^\infty \frac{\tau_k(n)}{n^s},\ (\Re s > 1).
\end{equation}
The additive correlation sums
\begin{equation} \label{eq:DklXh}
	D_{k,\ell} (X,h)
	=
	\sum_{ n\le X}
	\tau_k(n) \tau_\ell(n+h)
\end{equation}
of the $k$-fold divisor functions $\tau_k(n)$	are instrumental in the study of moments of $L$-functions, dating back to 1918 from G. Hardy and J. Littlewood in their pioneering work on the Second moment of the magnitude of the  Riemann zeta function on the vertical line with real part one-half, corresponding to the case $k=\ell=2$. Despite its importance, no one to this day has been able to rigorously prove even an asymptotic formula for this correlation when both $k$ and $\ell$ are three or larger, though it is widely believed (see, e.g., \cite[Conjecture 1.1]{NgThom2019}, \cite[Conjecture 1]{TaoBlog}, \cite[Conjecture 3]{ConreyGonnek2002}, and \cite[Conjecture 1.1 (ii)]{MatomakiRadziwillTao2019}), that
\begin{equation} \label{eq:CorrelationSum}
	\sum_{n \le X} \tau_3(n) \tau_3(n+1)
	\sim 
	\frac{1}{4}
	\prod_p
	\left(
	1- \frac{4}{p^2}
	+ \frac{4}{p^3}
	- \frac{1}{p^4}
	\right) 
	X \log^4 X,
\end{equation} 
as $X\to \infty$. More generally, the \textit{additive divisor correlation problem} asks for an asymptotic of the form
\begin{equation} \label{eq:correlationProblem}
	\sum_{n \le X} \tau_\ell(n) \tau_k(n+1)
	= 
	M_{\ell, k}(X) + E_{\ell, k}(X),
\end{equation}
where $M_{\ell,k}(X)$ is a main term of order exactly $X (\log X)^{\ell + k - 2}$ and $E_{\ell, k}(X)$ is an error term of order strictly smaller than $M_{\ell, k}(X)$. In Table \ref{table:1} we summarize results on the error term $E_{\ell,k}(X)$ for various $\ell$ and $k$. 

\begin{table}[h!]
	\caption{Progress on the error term $E_{\ell,k}(X)$ in the asymptotic $\sum_{n \le X} \tau_\ell(n) \tau_k(n+1) = M_{\ell, k}(X) + E_{\ell, k}(X)$, as $X\to \infty$, where $E_{\ell, k}(X)$ is of order strictly smaller than $X (\log X)^{\ell + k - 2}$.}
	\label{table:1}
	\begin{tabular}{ l l l l}
		\hline \hline
		$\ell$ & $k$ & References & $E_{\ell,k}(X)$\\
		\hline \hline
		$2$& $2$ & Ingham \cite[(8.5) p. 205]{Ingham1927} (1927) & $\ll X \log X$ \\
		& & Estermann \cite[p. 173]{Estermann1931} (1931) & $\ll X^{11/12} (\log X)^{17/6}$\\
		& & Heath-Brown \cite[Theorem 2, p. 387]{HeathBrown1979} (1979) & $\ll X^{5/6+\epsilon}$\\
		& & Deshouillers \& Iwaniec \cite[Theorem, p. 2]{DeshouillersIwaniec1982} (1982) & $\ll X^{2/3+\epsilon}$\\
		\hline
		$2$ & $3$ & Hooley \cite[Theorem 1, p. 412]{Hooley1957} (1957) & $\ll X (\log X \log \log X)^2$\\
		& & Friedlander \& Iwaniec \cite[p. 320]{FriedlanderIwanice1985} (1985) & $\ll X^{1-\delta}\ (\delta >0)$\\
		& & Heath-Brown \cite[Theorem 3, p. 32]{HeathBrown1986} (1986) & $\ll X^{1-\frac{1}{102}+\epsilon}$\\
		& & Bykovskii, Vinogradov \cite[p. 3004]{BykovskiiVinogradov1987} (1987) & $\ll X^{8/9 + \epsilon}$  \\
		\hline
		$2$ & $\ge 4$ & Linnik 
		\cite[Teopema 3, p. 961]{Linnik1958}
		\cite{Linnik1958} (1958)  & $\ll X (\log X)^{k - 1} (\log \log X)^4$
		\\
		& & Bredikhin \cite[Teopema, p. 778]{Bredikhin1963} (1963) & $\ll X (\log X)^{k - 1} (\log \log X)^4$\\
		& & Motohashi \cite[Theorem 1, p. 43]{Motohashi1980} (1980) & $\ll X (\log \log X)^{c(k)} (\log X)^{-1}$\\
		& & Fouvry, Tenenbaum \cite[Theoreme 1, p. 44]{FouvryTenenbaum1985} (1985) & $\ll X \exp{(-c(k) (\log X)^{1/2})}$\\
		& & Bykovskii, Vinogradov \cite[p. 3004]{BykovskiiVinogradov1987} (1987) & $\ll X^{1 - \frac{1}{2k} + \epsilon}$ \\
		& & Drappeau \cite[Theorem 1.5, p. 687]{Drappeau2017} (2017) & $\ll X^{1-\delta/k}\ (\delta > 0)$\\
		& & Topacogullari \cite[Theorem 1.1, p. 7682]{Topacogullari2018} (2018) & $\ll X^{1-\frac{4}{15k-9} + \epsilon} + X^{1- \frac{1}{57} + \epsilon}$
		\\
		\hline \hline
		$3$ & $3$ & Open--no unconditional bound on $E_{\ell,k}(X)$ is known.& \\
		\hline \hline 
	\end{tabular}
\end{table}

An approach to the shifted convolution $\tau_k(n) \tau_\ell(n+h)$ is through what is called a ``level of distribution". It is a folklore conjecture that $\tau_k(n)$ all have a level of distribution up to $1-\epsilon$, for any $\epsilon>0$. Some known level, or exponent, of distribution for $\tau_k(n)$ was summarized in \cite[Table 1, p. 33]{Nguyen2021}. One of the purposes of this paper is to provide a conditional proof for the full asymptotic expansion for \eqref{eq:CorrelationSum}, on assuming the following upper bound for the averaged level of distribution of $\tau_3(n)$ in arithmetic progressions up to level $2/3$ for $k=\ell=3$, and to indicate the barrier in the additive divisor correlation problem. This obstacle is summarized in the following

\begin{conjecture} \label{conj:LevelofDistribution}
	Let $\epsilon>0$. Then, for any $k\ge 1$, we have, uniformly in $1\le h\le X^{\frac{k-1}{k}}$, the upper bound
	\begin{equation} \label{eq:412}
		\sum_{q \le X^{\frac{k-1}{k}}}
		\left|
		\sum_{\substack{n\le X\\ n \equiv h (\bmod q)}} \tau_k(n)
		- \frac{1}{\varphi\left( \frac{q}{(h, q)} \right)}
		\sum_{\substack{n\le X\\ \left(n, \frac{q}{(h, q)} \right) =1}} \tau_k(n)
		\right|
		\ll_\epsilon
		X^{\frac{1}{2} + \epsilon},
	\end{equation}
	as $X\to \infty$, where the implied constant is independent of $h$ and only depends on $\epsilon$.
\end{conjecture}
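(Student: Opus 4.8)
We outline how one might approach \eqref{eq:412} and indicate the point at which it goes beyond what is currently known. The estimate is a Bombieri--Vinogradov-type theorem for $\tau_k$ with level of distribution $\theta=(k-1)/k$, and the natural plan is to combine a combinatorial decomposition of $\tau_k$ with the dispersion method and bounds for hyper-Kloosterman sums. First I would dispose of the non-coprime case by a standard reduction, so that we may assume $(h,q)=1$ and the main term is $\varphi(q)^{-1}\sum_{n\le X,\,(n,q)=1}\tau_k(n)$. Then I would expand $\tau_k(n)=\sum_{n=m_1\cdots m_k}1$, cut each $m_i$ into dyadic ranges $m_i\asymp M_i$ with $\prod_i M_i\asymp X$, and --- via a Heath-Brown-type identity, or for $k=3$ simply via $\tau_3=\mathbf 1\ast\mathbf 1\ast\mathbf 1$ --- sort the resulting sums into ``Type I'' pieces, in which one variable can be isolated as a long, unweighted smooth sum, and ``Type II/III'' pieces, the genuinely bilinear (or trilinear) ones.

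In the Type I pieces one detects the congruence $m_1\cdots m_k\equiv h\pmod q$ with additive characters, applies Poisson summation in the long variable, and is left with incomplete hyper-Kloosterman sums $\mathrm{Kl}_k(\,\cdot\,;q)$; Deligne's bound $|\mathrm{Kl}_k(a;p)|\le k\,p^{(k-1)/2}$ for $p\nmid a$, extended to composite $q$ by multiplicativity with a loss of $\tau(q)^{O_k(1)}$ at prime powers, then yields a contribution of the required size $X^{1/2+\epsilon}$ after the short variables and $q\le X^{(k-1)/k}$ are summed trivially. The crux is the Type II/III range: there Deligne's pointwise bound is used precisely at the threshold and is no longer enough, so one must harvest extra cancellation from the average over $q$ together with a second bilinear variable. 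Here I would run the dispersion method of Deshouillers--Iwaniec --- open the square, separate the modulus, and reduce to bilinear forms $\sum_{q\sim Q}\sum_{m}\sum_{n}\alpha_m\beta_n\,\mathrm{Kl}_k(mn;q)$. For $k=2$ these are bilinear forms in ordinary Kloosterman sums, which the Kuznetsov trace formula and the spectral large sieve bring under control; this is precisely what carries $\tau_2$ past level $1/2$ to level $2/3$ and beyond. For $k=3$ one would need the analogous input --- either a Kuznetsov-type formula on $\mathrm{GL}_3$ with effective error terms, or, via the factorization $\tau_3=\mathbf 1\ast\tau_2$, good bounds for $\tau_2$-weighted bilinear forms in $\mathrm{GL}_2$ Kloosterman sums that remain uniform in the additional average over $q$.

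The main obstacle is exactly this last step when $k\ge 3$. With only individual bounds for $\mathrm{Kl}_3$ one cannot pass level $1/2$, and the square-root-type cancellation needed in $\sum_{q\sim Q}\mathrm{Kl}_3(\,\cdot\,;q)$ and in the attached bilinear forms --- equivalently, sufficiently strong control of the relevant averages of $\mathrm{GL}_3$ Fourier coefficients --- is simply not available at the strength that reaching level $2/3$ uniformly in $h$ would require; the best results presently known for $\tau_3$, even on average, fall well short of level $2/3$. This is the barrier in the additive divisor correlation problem that Conjecture~\ref{conj:LevelofDistribution} is meant to isolate, and it is why \eqref{eq:412} is assumed here rather than proved: a genuine advance in the analytic theory of automorphic forms on $\mathrm{GL}_3$ (or an elementary device circumventing it) appears to be needed.
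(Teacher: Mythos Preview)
The statement you were asked to prove is labelled \emph{Conjecture} in the paper and is not proved there at all; the paper explicitly assumes it as a hypothesis in order to derive the main results on $D_{3,3}(X,1)$, and supports it only with numerical evidence (Figures~\ref{figure:D22X1}, \ref{figure:D22X1LogLogPlot}, \ref{figure:deltamethod} and the surrounding discussion). So there is no ``paper's own proof'' to compare against.

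Your write-up is therefore not a proof either, and to your credit you say so clearly: you outline the natural Bombieri--Vinogradov-style attack (combinatorial decomposition of $\tau_k$, Poisson in the long variable, Deligne bounds for hyper-Kloosterman sums, dispersion for the bilinear pieces) and correctly locate the obstruction at the Type~II/III range for $k\ge 3$, where the required cancellation in averages of $\mathrm{Kl}_3$ over moduli is not available. This is an accurate diagnosis of why \eqref{eq:412} remains conjectural, and it matches the role the conjecture plays in the paper (the author calls it ``the barrier in the correlation problem'' and an Elliott--Halberstam-type statement for $\tau_k$). But as a proof attempt it has a genuine gap by design: the bilinear/trilinear estimate you would need in the dispersion step for $k=3$ at level $2/3$ is simply asserted to be out of reach, not supplied. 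In short, your proposal is a correct heuristic explanation of the conjecture's status, not a proof, and the paper offers nothing stronger.
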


\begin{remark}
	Numerical evidence for this conjectural upper bound is provided in the last section, where we numerically determine an upper bound for the exponent of the error term and also the size of the implied constant for the two error terms $E_{3,3}(X,1)$ and $E_{2,2}(X,1)$.
\end{remark}

Our first result gives the full main term for the shifted convolution $D_{3,3}(X,1)$, on assuming a special case of this conjecture.

\setcounter{thm}{0}
\begin{thm} \label{thm:D33XhFullPoly}
	Assume Conjecture \ref{conj:LevelofDistribution} for $k=3$. 
	Let $D_{3,3}(X,h)$ be defined as in \eqref{eq:DklXh}.
	Let $\epsilon>0$. We have, for any composite shift $1 \le h \le X^{2/3}$,
	\begin{equation} \label{eq:D33b}
		D_{3,3}(X,h) 
		= M_{3,3}(X,h) +E_{3,3}(X,h),
		\quad
		(\text{as } X\to \infty),
	\end{equation}
	where
	\begin{align} \label{eq:M33}
		\quad \quad 
		&M_{3,3}(X,h)
		\\ & \quad =
		3\underset{\substack{s=1\\ w_1=w_2=0}}{\mathrm{Res}}
		\left(
		\frac{X^{\frac{1}{3}(w_1+2w_2+3s)}}{s w_1w_2}
		\zeta^3(s)
		\zeta(w_1+w_2+1)
		\zeta(w_2+1)
		A_1(s,w_1,w_2)
		\right)
		\\& \quad  \quad 
		- 3
		\underset{\substack{s=1\\ w_2=1, w_1=0}}{\mathrm{Res}}
		\left(
		\frac{X^{ \frac{1}{3}(w_1+2w_2+s)}}{s w_1w_2}
		\zeta^3(s)
		\zeta(w_1+w_2+1-s)
		\zeta(w_2+1-s)
		A_2(s,w_1,w_2)
		\right)
		\\& \quad \quad \quad 
		+
		\underset{\substack{s=1\\ w_1=w_2=1}}{\mathrm{Res}}
		\left(
		\frac{X^{\frac{1}{3} (w_1+w_2+s)}}{s w_1w_2}
		\zeta^3(s)
		\zeta(w_1+1-s)
		\zeta(w_2+1-s)
		A_3(s,w_1,w_2)
		\right)
		\\& \quad \quad \quad \quad 
		+ O(X^{0.897}),
	\end{align}
	with
	\begin{align} \label{eq:A1}
		\quad\quad
		&A_1(s,w_1,w_2)
		=
		\prod_p
		\left(1-\frac{1}{p^{w_1+w_2+1}}\right)
		\left(1-\frac{1}{p^{w_2+1}}\right)
		\\ & \quad
		\times
		\left(
		1
		+
		\frac{\left(1-\frac{1}{p^s}\right)^3}{1-\frac{1}{p}}
		\left(
		\frac{1}{p^{w_1+w_2+1}-1}
		+
		\frac{1}{p^{w_2+1}-1}
		+
		\frac{1}{(p^{w_1+w_2+1}-1)(p^{w_2+1}-1)}
		\right)
		\right),
	\end{align}
	\begin{align} \label{eq:A2}
		A_2&(s,w_1,w_2)
		=
		\prod_p
		\left(1-\frac{1}{p^{w_1+w_2+1-s}}\right)
		\left(1-\frac{1}{p^{w_2+1-s}}\right)
		\\ &
		\times
		\left(
		1
		+
		\frac{\left(1-\frac{1}{p^s}\right)^3}{1-\frac{1}{p}}
		\left(
		\frac{1}{p^{w_1+w_2+1-s}-1}
		+
		\frac{1}{p^{w_2+1-s}-1}
		+
		\frac{1}{(p^{w_1+w_2+1-s}-1)(p^{w_2+1-s}-1)}
		\right)
		\right),
	\end{align}
	and
	\begin{align} \label{eq:A3}
		\quad \quad
		A_3&(s,w_1,w_2)
		=
		\prod_p
		\left(1-\frac{1}{p^{w_1+1-s}}\right)
		\left(1-\frac{1}{p^{w_2+1-s}}\right)
		\\ &
		\times
		\left(
		1
		+
		\frac{\left(1-\frac{1}{p^s}\right)^3}{1-\frac{1}{p}}
		\left(
		\frac{1}{p^{w_1+1-s}-1}
		+
		\frac{1}{p^{w_2+1-s}-1}
		+
		\frac{1}{(p^{w_1+1-s}-1)(p^{w_2+1-s}-1)}
		\right)
		\right),
	\end{align}
	and the error term satisfies
	\begin{equation}
		E_{3,3}(X,1)
		\ll_\epsilon
		X^{\frac{1}{2}+\epsilon}.
	\end{equation}
	The functions $\zeta^3(s) A_1(s,w_1,w_2)$, $\zeta^3(s) A_2(s,w_1,w_2)$, and $\zeta^3(s) A_3(s,w_1,w_2)$ are analytic in the wider regions 
	\begin{align} \label{eq:niceregions}
		&\Re(s) > 1/2,\ \Re(w_2) > -1/2,\ \text{and } \Re(w_1) > -1/2 - \Re(w_2);
		\\&
		\Re(s) > 1/2,\ \Re(w_2) > \Re(s) - 1/2,\ \text{and } \Re(w_1) > \Re(s) - \Re(w_2) - 1/2;
		\\&
		\Re(s), \Re(w_1), \Re(w_2) > 1/2,
	\end{align} 
	respectively.
\end{thm}

\begin{remark}
	Our method applies equally to correlations between the von Mangoldt function $\Lambda(n)$ and $\tau_k(n)$ of the form
	\begin{equation} \label{eq:Elliott–Halberstam}
		P_k(X,h)=
		\sum_{n\le X}
		\tau_k(n)
		\Lambda (n+h).
	\end{equation}
	In particular, by assuming the Elliott-Halberstam Conjecture for $\Lambda(n)$, the full main-term for the prime correlation \eqref{eq:Elliott–Halberstam} can be derived and numerically tested, similar to the case for $D_{3,3}(X,1)$ and $D_{2,2}(X,1)$ demonstrated here. In this sense, Conjecture \ref{conj:LevelofDistribution} can be seen as an Elliott-Halberstam Conjecture, but for the $k$-fold divisor function $\tau_k(n)$.
\end{remark}

\begin{remark}
	The error term in \eqref{eq:M33} could likely be improved by using smooth weights. However, due to the regions \eqref{eq:niceregions} of analyticity of the Euler factors $A_i$, the best error term for the main term \eqref{eq:M33} we seem to get from our method is $O(X^{2/3 + \epsilon})$.
\end{remark}

We give a numerical verification of our prediction \eqref{eq:D33b}, which also seems to suggest squareroot cancellation in the error term. This, in particular, gives the first quantitative confirmation of any prediction on the additive correlation sum $D_{3,3}(X,1)$, as the coefficients of these polynomials are not too easy to compute. The result is

\begin{cor} \label{cor:mypolynomialM33}
	Let $M_{3,3}(X,1)$ be defined by \eqref{eq:M33}.
	Then, we have, with at least sixty-eight digits accuracy in the coefficients,
	\begin{align} \label{eq:m3u1bc}
		&M_{3,3}(X,1)
		=
		X \left(0.054444679154884094580751878529861703282699438750338984412069100
		\right.
		\\& \left. \quad\quad\quad\quad\quad\quad\quad\quad
		88090 66227780631551394813609558909414229584839437008 \log^4 X
		\right.
		\\& \left.
		+ 0.710113929053644747553958926673505372958197119463757504939845715359739 \log ^3 X
		\right.
		\\& \left.
		+ 2.02119605787987777943324240784753809467091508369917789267040603543881 \log ^2 X
		\right.
		\\& \left.
		+ 0.677863310832980388541571083062733656003222322704135348688102425159897 \log X
		\right.
		\\& \left.
		+ 0.287236647746619417221664617814645950166036274397222249618913907447198\right)
		+ O(X^{0.897}).
	\end{align}
\end{cor}

Corollary \ref{cor:mypolynomialM33} is derived from the main term in Theorem \ref{thm:D33XhFullPoly} with the help of Mathematica\footnote{Mathematica files available at \href{https://aimath.org/~dtn/papers/correlations/}{https://aimath.org/$\sim$dtn/papers/correlations/}} to carry out the residues computations. The coefficients of \eqref{eq:m3u1bc} can be computed to any degree of accuracy--see the proof of Corollary \ref{cor:mypolynomialM33} in the Appendix \ref{section:appendix} for more.

A numerical computation provided by B. Conrey shows that, for $X=10^9$, the data
\begin{equation}
	\sum_{n \le 10^9}
	\tau_3(n) \tau_3(n+1)
	= \boldsymbol{17, 243, 3}58, 889, 275
\end{equation}
compares extremely well with the prediction \eqref{eq:m3u1bc}
\begin{equation}
	[ M_{3,1}(10^9,1) ]
	=
	\boldsymbol{17, 243, 3}95, 216, 318,
\end{equation}
with the first 6 of 14 digits match exactly, which is almost half the number of digits. A graphical comparison between the data $D_{3,3}(X,1)$ and our prediction $M_{3,3}(X,1)$ is provided in Figure \ref{figure:allThree}, showing great alignment. 
\begin{figure}
	\includegraphics[scale=0.9]{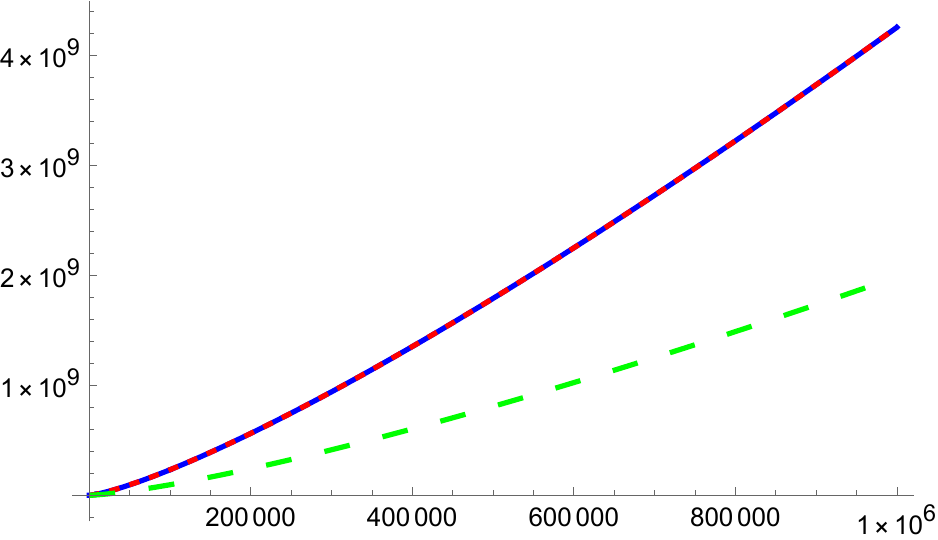}
	\caption{A plot of the three functions $D_{3,3}(X,1)$ in \eqref{eq:DklXh} (solid blue), $M_{3,3}(X,1)$ in \eqref{eq:m3u1bc} (dotted red), and $\frac{1}{4}
		\prod_p
		\left(
		1- \frac{4}{p^2}
		+ \frac{4}{p^3}
		- \frac{1}{p^4}
		\right) 
		X \log^4 X$ (green large dash), for $X \le 10^6$.}
	\label{figure:allThree}
\end{figure}
In Figure \ref{figure:deltamethod}, a plot of the error term $E_{3,3}(X,1) = D_{3,3}(X,1) -  M_{3,3}(X,1)$ is shown, for $X \le 10^6$.
\begin{figure}
	\includegraphics[scale=0.7]{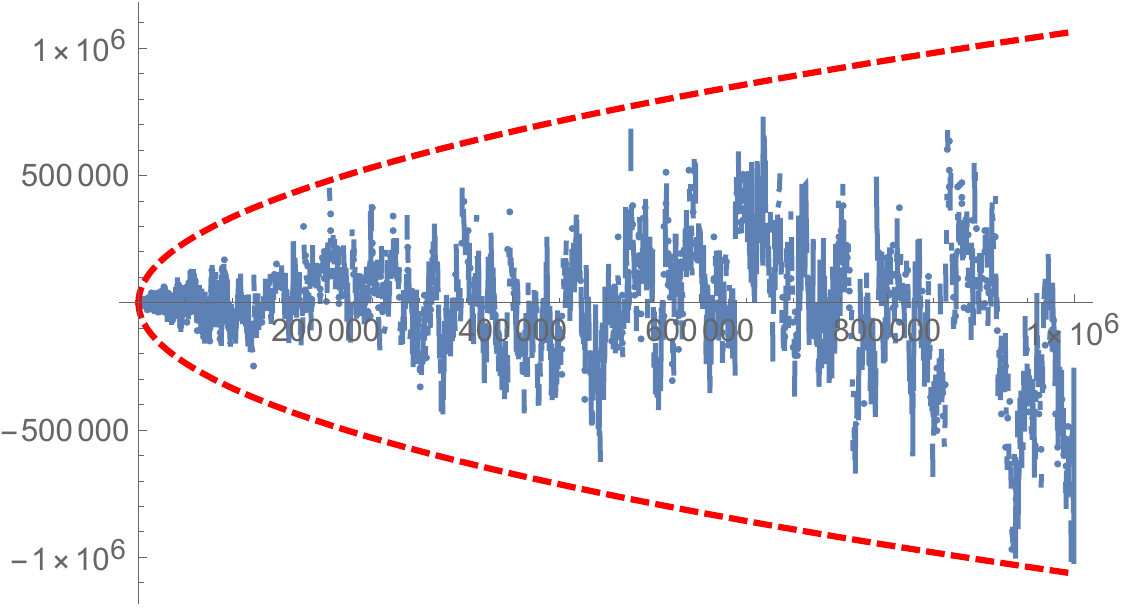}
	\caption{A plot of the error term $E_{3,3}(X,1)$ in \eqref{eq:D33b} in solid blue, and the bounds $\pm 1050 X^{0.51}$ in dashed red, for $X$ up to a million.}
	\label{figure:deltamethod}
\end{figure}

We work out in our next result the leading order main term in $M_{k,\ell}(X,h)$ for any $k,\ell$ and composite shift $h$, and verify, for the special case $k=\ell=3$ and any composite shift $h$, that our answer matches previous computations of Ng and Thom \cite{NgThom2019} and Tao \cite{TaoBlog}. 

\begin{cor} \label{cor:correlation}
	Assume Conjecture \ref{conj:LevelofDistribution} for all $\ell$. 
	Let $D_{k,\ell}(X,h)$ be defined as in \eqref{eq:DklXh}.
	We have, for any $k,\ell\ge 2$ and composite shift $1 \le h \le X^{(\ell-1)/\ell}$,
	\begin{equation} \label{eq:CorrelationSumforhPrime}
		D_{k,\ell}(X,h)
		\sim 
		\frac{C_{k,\ell}
			f_{k,\ell}(h)}{(k-1)! (\ell-1)!}
		X (\log X)^{k+\ell-2},
	\end{equation}
	where 
	\begin{equation}
		C_{k,\ell}
		= 
		\prod_p
		\left(
		\left(
		1 - \frac{1}{p}
		\right)^{k-1}
		+ 
		\left(
		1 - \frac{1}{p}
		\right)^{\ell-1}
		-
		\left(
		1 - \frac{1}{p}
		\right)^{k+\ell-2}
		\right),
	\end{equation}
	and $f_{k,\ell}(h)$ is given by equation \eqref{eq:fklh} below.
	
	In particular, for $k=\ell=3$ and any $1  \le h \le X^{2/3}$, we have
	\begin{equation} \label{eq:leadingTermAsymptotic}
		\sum_{n \le X} \tau_3(n) \tau_3(n+h)
		\sim 
		\frac{1}{4}
		\prod_p
		\left(
		1- \frac{4}{p^2}
		+ \frac{4}{p^3}
		- \frac{1}{p^4}
		\right) 
		f_{3,3}(h)
		X \log^4 X,
	\end{equation} 
	where
	\begin{align} \label{eq:f33}
		f_{3,3}(h)
		=
		&\prod_{p \mid h}
		\left(
		-\nu_p(h)^2 (p-1)^2 (p+1)+p^{\nu_p(h)+2}+4 p^{\nu_p(h)+3}
		\right.
		\\ & \left. \quad
		+p^{\nu_p(h)+4}
		+\nu_p(h) \left(-4 p^3+6 p-2\right)
		-4 p^3-5 p^2+4 p-1
		\right)
		\\ &  \quad \quad
		/
		\left(
		p^{\nu_p(h)}(p-1)^2 \left(p^2+2 p-1\right)
		\right),
	\end{align}
	with $\nu_p(h)$ the highest power of $p$ that divides $h$.
\end{cor}

We expect that our answers \eqref{eq:CorrelationSumforhPrime} also agree for all $k,\ell$ and composite shifts $h$. We are unable to show that uniformly at the moment, but we give an algorithm to check it case by case.

\begin{remark}
	The conditional asymptotic \eqref{eq:leadingTermAsymptotic} confirms a recent Conjecture in \cite[Conjecture, page 35]{NgThom2019} for $k=\ell=3$ and $1\le h\le X^{2/3}$.
\end{remark}

Corollary \ref{cor:correlation} above is derived from assuming Conjecture \ref{conj:LevelofDistribution} together with the following unconditional
\setcounter{thm}{1}
\begin{thm} \label{thm:2b}
	For $k, \ell \ge 1$ and $h$ any composite number, we have
\begin{align} \label{eq:h=prime}
	\sum_{\ell_1 \le X^{1/k}}
	&\sum_{\ell_2 \le \frac{X^{(k-1)/k}}{\ell_1}}
	\cdots
	\sum_{\substack{\ell_{k-1} \le \frac{X^{(k-1)/k}}{\ell_1 \cdots \ell_{k-2}}}} 
	\sum_{\ell_k \le \frac{X}{\ell_1 \cdots \ell_{k-1}}}
	\frac{1}{\varphi\left(q_1\right)}
	\underset{\substack{s=1}}{\mathrm{Res}}
	\left(
	\frac{(X/\delta)^s}{s}
	\sum_{\left( n, q_1 = 1 \right)}
	\frac{\tau_\ell (n \delta )}{n^s}
	\right)
	\\& \sim
	\frac{C_{k,\ell} f_{k,\ell}(h) }{k! (\ell-1)!}
	X \log^{k+\ell-2} X,
	\quad
	(X\to \infty),
\end{align}
where $q = \ell_1 \cdots \ell_{k-1}$, $\delta = (h, q)$, and $q_1 = q/\delta$.
\end{thm}
We give an elementary proof, essentially, for \eqref{eq:h=prime} for the special case $k=\ell=3$ and $h=1$ in Section \ref{section:h=1}. For the general situation $k, \ell \ge 1$ and $h>1$, it turns out to be more robust to use generating functions, which we do in Section \ref{section:compositeh}.

For comparison with our method, in Section \ref{section:ConreyGonek}, we explicitly work out all the main terms in full details from a previously conjectured formula of Conrey and Gonek \cite[Conjecture 3]{ConreyGonnek2002} for the specific case $k=3$ and $h=1$, showing complete agreement in our answers to at least 68 digits down to the constant term. This is
\setcounter{thm}{2}
\begin{thm} \label{thm:deltamethodb}
	Let $\epsilon>0$. Let $m_3(X,1)$ be defined via the delta method by \eqref{eq:m3prime}. Then, we have, as $X\to \infty$, with at least 71 digits accuracy in the coefficients,
	\begin{align} \label{eq:m3u1b}
		&m_3(X,1)
		\\&=
		0.05444467915488409458075187852986170328269943875033898441206910088090
		\\& \qquad
		66227780631551394813609558909414229584839437008 X \log ^4(X)
		\\&\quad
		+0.710113929053644747553958926673505372958197119463757504939845715359
		\\& \qquad \quad
		739076661971842253983213149206 X \log ^3(X)
		\\&\quad \quad +2.0211960578798777794332424078475380946709150836991778926704060354
		\\& \qquad \quad \quad
		3880 548628848354775122568369734 X \log ^2(X)
		\\&\quad \quad \quad
		+0.67786331083298038854157108306273365600322232270413534868810242
		\\& \qquad \quad \quad \quad
		515989 727867201461267995359769 X \log (X)
		\\& \quad \quad \quad \quad
		+ 0.287236647746619417221664617814645950166036274397222249618913
		\\& \qquad \quad \quad \quad \quad
		90744731 664345218868780687078219 X
		+ O(X^\epsilon).
	\end{align}
\end{thm}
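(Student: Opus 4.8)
The plan is to derive $m_3(X,1)$ from the Conrey--Gonek conjectural formula \cite[Conjecture 3]{ConreyGonnek2002} specialized to $k=\ell=3$ and $h=1$, then extract an explicit polynomial in $\log X$ with numerically evaluated coefficients. Recall that the Conrey--Gonek prediction expresses the main term of $D_{k,\ell}(X,h)$ as a contour integral (or equivalently a residue) built from a product of shifted zeta factors together with an arithmetic factor $a_{k,\ell}(h)$ encoding the local densities at each prime. Concretely, for the diagonal case the main term is a $2$-dimensional (here really a $1$-dimensional after using the shift symmetry) integral of the form
\begin{equation}
 m_3(X,1) = \frac{1}{(2\pi i)^2}\oint\oint \frac{G(z,w)\, X^{z+w}}{z\,w\,(z+w)} \, dz\, dw,
\end{equation}
where $G(z,w)$ packages $\zeta$-factors of the shape $\prod_{i,j}\zeta(1+z_i+w_j)$ times the singular series $\mathfrak{S}_h(z,w)$ evaluated at $h=1$; I would first recall this formula precisely and reduce it to a residue at the poles coming from the zeta factors.

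The first concrete step is to compute the arithmetic factor. For $h=1$ the singular series is an absolutely convergent Euler product, and at $(z,w)=(0,0)$ it collapses to $\prod_p\bigl(1 - 4/p^2 + 4/p^3 - 1/p^4\bigr)$, matching the constant in \eqref{eq:CorrelationSum}; more generally one needs its Taylor expansion in $z,w$ near the origin to enough order to capture all five coefficients of the degree-$4$ polynomial. I would write the local factor at $p$ as a rational function of $p^{-z}, p^{-w}$, factor out the polar part $\prod_{i,j}(1-p^{-1-z_i-w_j})^{-1}$, and verify that the remaining product converges; then I would expand $\log$ of each local factor to fourth order and sum over primes, which reduces the computation to values of $\sum_p$-type prime sums expressible via $\log\zeta$ and its derivatives at integer points $\ge 2$ (hence rapidly convergent and amenable to high-precision numerics). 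The second step is the contour/residue calculus: substitute $X^{z+w}$, note that the only relevant singularities producing terms of size $X(\log X)^j$ come from the fourth-order pole structure of the product of four $\zeta(1+\cdot)$ factors colliding with the $1/(z+w)$ factor, and compute the residue by expanding every zeta factor as $\zeta(1+s) = 1/s + \gamma + \dots$ and the arithmetic factor by the Taylor data from step one. Collecting powers of $\log X$ then yields the five coefficients, and the claimed $O(X^\epsilon)$ absorbs the lower-order polar contributions (from poles at shifted points) that the conjecture predicts to be of size $X^{1/2+\epsilon}$ or smaller — in fact for the statement as phrased one simply reads off the polynomial part.

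The main obstacle I anticipate is purely computational bookkeeping rather than conceptual: the arithmetic factor's Taylor expansion to fourth order in two variables has many terms, and tracking them through the residue computation while maintaining $70+$ digit precision requires organizing the prime sums carefully (in terms of the Stieltjes-type constants and $\zeta^{(m)}(n)$ for $n=2,3,4,\dots$) and controlling truncation error in the Euler products. A secondary subtlety is making sure the normalization of the Conrey--Gonek integral (the precise power of $X$, the combinatorial factor $1/k!$ or $1/4$ analogues, and the exact set of $\zeta$-factors in $G$) matches the convention under which \eqref{eq:CorrelationSum} was stated, since an off-by-constant here would be invisible at leading order but would corrupt the lower coefficients. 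Once those are pinned down, comparing \eqref{eq:m3u1b} against the independently derived \eqref{eq:m3u1bc} from Corollary \ref{cor:mypolynomialM33} serves as a strong internal consistency check, and agreement to the stated number of digits is the verification that the delta-method/Conrey--Gonek route and the elementary congruence route produce the same polynomial.
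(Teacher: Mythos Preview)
Your overall plan---compute a two-variable residue involving shifted zeta factors and an arithmetic Euler product, Taylor-expand the Euler product near the pole, and evaluate the resulting prime sums to high precision---is exactly the shape of the paper's argument. But your description does not actually engage with the specific formula \eqref{eq:m3prime} that \emph{defines} $m_3(X,1)$ in the statement, and several details you wrote down do not match it.

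The definition \eqref{eq:m3prime} gives the \emph{derivative} $m_3'(u,1)$ as a sum over $q$ of the \emph{square} of a one-variable residue $\mathrm{Res}_{s=0}\,\zeta^3(s+1)\,G_3(s+1,q)\,(u/q)^s$, with $G_3$ as in \eqref{eq:G3}. The key step in the paper's proof---absent from your outline---is to open this square as a product of two residues in independent variables $s,w$, and then bring the $q$-sum inside to obtain
\[
m_3'(u,1)\;=\;\underset{s=0,\ w=0}{\mathrm{Res}}\ \zeta^3(s+1)\,\zeta^3(w+1)\,u^{s+w}\,A(s,w),
\qquad
A(s,w)=\sum_{q\ge 1}\frac{\mu(q)}{q^{2+s+w}}\,G_3(s+1,q)\,G_3(w+1,q),
\]
which is an explicit Euler product since $\mu$ and $G_3(\cdot,q)$ are multiplicative in $q$. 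One then needs $A(0,0)$ and the partials $A^{(1,0)},A^{(1,1)},A^{(2,0)},A^{(2,1)},A^{(2,2)}$ at $(0,0)$, each computed from a short list of prime sums and products (six of them, in Lemma~\ref{lemma:sixEstimates}) evaluated via the prime zeta function. Integrating the resulting degree-$4$ polynomial in $\log u$ yields \eqref{eq:m3u1b}; the $O(X^\epsilon)$ is simply the discarded integration constant, not a contribution from ``shifted poles''.

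Two concrete mismatches to fix: the pole structure here is $\zeta^3(s+1)\,\zeta^3(w+1)$, i.e.\ a triple pole in each variable, not ``four $\zeta(1+\cdot)$ factors''; and the integral you wrote, with kernel $1/(zw(z+w))$ and factors $\prod_{i,j}\zeta(1+z_i+w_j)$, is a different incarnation of the moment recipe and is not what \eqref{eq:m3prime} says. If you work from that generic form instead, you owe a separate argument that it coincides with \eqref{eq:m3prime}, which is not the content of this theorem.
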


In the last Section \ref{section:D22Xh}, we provide further numerical evidence for Conjecture \ref{conj:LevelofDistribution} for the case $k=2$. More precisely, we refine an unconditional result of Heath-Brown \cite[Theorem 2]{HeathBrown1979} on the shifted correlation $D_{2,2}(X,h)$ of the usual divisor function, giving
\setcounter{thm}{3}
\begin{thm} 
	Let $\epsilon>0$. We have, uniformly for all $1\le h\le X^{1/2}$, the asymptotic equality
	\begin{equation} \label{eq:D22Xh}
		\sum_{n\le X}
		\tau(n) \tau(n+h)
		=
		M_{2,2}(X,h)
		+ 
		E_{2,2}(X,h),
	\end{equation}
	where
	\begin{equation}
		M_{2,2}(X,h)
		=X \left(
		c_2(h) \log^2 X
		+ c_1(h) \log X
		+ c_0(h)
		\right),
	\end{equation}
	with
	\begin{equation}
		c_2(h)
		=
		\frac{6}{\pi^2} \sum_{d \mid h} \frac{1}{d},
	\end{equation}
	\begin{equation}
		c_1(h)
		=
		(4 \gamma - 2) f_h(1,0)
		+
		2 f_h^{(0,1)}(1,0)+f_h^{(1,0)}(1,0),
	\end{equation}
	and
	\begin{align}
		c_0(h)
		&=
		2 \left(-f_h^{(0,1)}(1,0)+\gamma  \left(2 f_h^{(0,1)}(1,0)+f_h^{(1,0)}(1,0)-f_h(1,0)\right)+f_h^{(1,1)}(1,0)+2 \gamma ^2 f_h(1,0)\right)
		\\&  \quad
		+ 
		f_h^{(1,0)}(1,0)
		+2 (\gamma -1) f_h(1,0),
	\end{align}
	with the constants $f_h$, $f_h^{(0,1)}$, $f_h^{(1,0)}$, and $f_h^{(1,1)}$ at $(1,0)$ depending only on $h$ given in Lemmas \ref{lemma:fh10} and \ref{lemma:fhDerivatives}, and with the error term satisfying
	\begin{equation}
		E_{2,2}(X,h)
		\ll_\epsilon
		X^{5/6+\epsilon}.
	\end{equation}
\end{thm}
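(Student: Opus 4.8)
The plan is to follow the classical hyperbola-method treatment of the binary additive divisor problem, as in Heath-Brown's argument, but to carry the dependence on the shift $h$ explicitly through every step. First I would open one of the two divisor factors by writing $\tau(n)=\sum_{de=n}1$, so that
\[
\sum_{n\le X}\tau(n)\tau(n+h)=\sum_{d\le X}\ \sum_{e\le X/d}\tau(de+h).
\]
Since $\tau(de+h)$ is symmetric in $d$ and $e$, the Dirichlet hyperbola split at $X^{1/2}$ gives
\[
\sum_{n\le X}\tau(n)\tau(n+h)=2\sum_{d\le X^{1/2}}\ \sum_{e\le X/d}\tau(de+h)\ -\ \sum_{d,e\le X^{1/2}}\tau(de+h),
\]
and, discarding the $O_\epsilon(X^{1/2+\epsilon})$ contribution of the terms with $de+h\le h$, every inner sum becomes $\sum_{m\le z,\ m\equiv h\,(\mathrm{mod}\,d)}\tau(m)$ for some $z\asymp X$, i.e.\ the divisor function along an arithmetic progression whose modulus is the outer variable $d$ and whose residue class is $h\bmod d$. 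The hypothesis $h\le X^{1/2}$ ensures $de+h\asymp de$ throughout, so $n+h$ stays comparable to $n$ and the main-term polynomial keeps the stated degree-$2$ shape.

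Second, I would invoke the standard asymptotic for the divisor function in arithmetic progressions: uniformly for $q\le z$ and \emph{any} residue class $a$,
\[
\sum_{\substack{m\le z\\ m\equiv a\,(\mathrm{mod}\,q)}}\tau(m)=z\,\mathcal P_{a,q}(\log z)+O_\epsilon\big((z^{1/3}+(z/q)^{1/2})\,z^\epsilon\big),
\]
where $\mathcal P_{a,q}$ is an explicit linear polynomial whose coefficients are multiplicative functions of $q$ and of $g:=(a,q)$; this is where the hyperbola method combined with Weil's bound for Kloosterman sums enters. Applying this with $z\asymp X$, $q=d$, $a=h$ and summing the error over $d\le X^{1/2}$ yields $\sum_{d\le X^{1/2}}(X^{1/3}+(X/d)^{1/2})X^\epsilon\ll X^{5/6+\epsilon}$, which — the overlap sum being handled identically — gives the claimed error bound. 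Because the divisor-in-AP error is uniform in the residue class, there is no loss in $h$.

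Third, and this is the bookkeeping heart of the statement, I would assemble the main term by summing $X\,\mathcal P_{h,d}(\log(X/d))$ over $d\le X^{1/2}$. Organizing the sum by $g=(h,d)$, writing $d=gd'$ with $d'\le X^{1/2}/g$ and factoring $\tau(gd')$ through the divisors of $g$, the $\gcd$-twisted local factors reassemble into the two-variable Dirichlet series $f_h(s_1,s_2)$ of Lemmas \ref{lemma:fh10} and \ref{lemma:fhDerivatives}. The $d$-sum up to $X^{1/2}$ of a $1/d$-weighted $\log(X/d)$ is precisely what promotes the degree in $\log X$ from $1$ to $2$, and a partial-summation (equivalently, a double-residue) computation against $f_h$ and its partial derivatives at $(1,0)$, together with the Euler constant arising from $\int\log$, outputs the coefficients $c_2(h)=\tfrac6{\pi^2}\sum_{d\mid h}\tfrac1d$, $c_1(h)$ and $c_0(h)$ in the stated closed forms; the leading coefficient serves as a consistency check against Ingham's classical asymptotic.

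I expect the main obstacle to be twofold. First, securing the divisor-in-AP estimate with an error term that is genuinely uniform in both the modulus $q$ and the residue class: the $z^{1/3}$ term is the bottleneck and is exactly what forces the exponent $5/6$ here rather than the $2/3$ of Deshouillers--Iwaniec, which would require the spectral theory of sums of Kloosterman sums. Second, the purely algebraic task of tracking the local factors cleanly enough that the summed main term collapses to the compact expressions for $c_2,c_1,c_0$ — in particular, verifying that the boundary contributions produced by truncating the $d$-sum at $X^{1/2}$ cancel against the overlap sum $\sum_{d,e\le X^{1/2}}\tau(de+h)$ and are otherwise absorbed into the $O_\epsilon(X^{5/6+\epsilon})$ error, uniformly for $1\le h\le X^{1/2}$.
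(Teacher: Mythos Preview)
Your proposal is correct and follows essentially the same route as the paper. Both open one copy of $\tau(n)$ via the hyperbola split at $X^{1/2}$ (the paper phrases this as the $k=2$ case of Hooley's identity, Lemma~\ref{lemma:keyDecompositionk}), both replace the resulting inner sum $\sum_{m\le z,\,m\equiv h\,(d)}\tau(m)$ by its main term using the Selberg--Hooley--Heath-Brown level-of-distribution input (the paper's Theorem~\ref{lemma:tau2/3230}, which is exactly your $z^{1/3+\epsilon}$ error), and both then sum the resulting local polynomial over $d\le X^{1/2}$ to produce $M_{2,2}(X,h)$ in terms of $f_h$ and its partial derivatives at $(1,0)$. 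The only cosmetic difference is that the paper packages the outer $d$-sum from the outset as a two-variable Perron integral and reads off the coefficients $c_i(h)$ as residues at $(s,w)=(1,0)$ and $(s,w)=(1,1)$ (your ``overlap'' piece), whereas you describe the same extraction in the language of partial summation; the two computations are formally equivalent and yield the same $X^{5/6+\epsilon}$ error.
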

As a consequence of this result, we obtain the following
\begin{cor} \label{cor:3b}
	We have, for any $\epsilon>0$, with at least 148 digits accuracy in the coefficients,
	\begin{align}
		&M_{2,2}(X,1)
		=
		X \left(\frac{6}{\pi ^2}\log ^2(X)
		\right.
		\\& \left. 
		+1.5737449203324910789070569280484417010544014980534581993991047787172106559673
		\right.
		\\& \left. \quad
		1173018329789033856157663793482022187619702084359231966550508901828044158 \log (X)
		\right.
		\\& \left. 
		-0.5243838319228249988207213304174247109766097340170991428485246582967458363611
		\right.
		\\& \left. \quad
		4606090215515124475866524185215534024889460792901985996741204565400064583\right)
		+ O(X^\epsilon).
	\end{align}
\end{cor}
For example, our $M_{2,2}(X, 1)$ given above for the main term of $D_{2,2}(X, 1)$ for $X=20, 220, 000$ yields
	\begin{equation}
		M_{2,2}(20.22 \times 10^6, 1)
		\approx
		\textbf{4, 003, 240}, 490,
	\end{equation}
	which is just 25 parts-per-billion of the answer
	\begin{equation} \label{eq:2022}
		\sum_{n\le 20, 220, 000}
		\tau(n) \tau(n+1)
		= \textbf{4, 003, 240}, 588;
	\end{equation}
	whereas the corresponding leading order asymptotic
	\begin{equation}
		\frac{6}{\pi^2}
		(20, 220, 000)
		\log^2 (20, 220, 000)
		\approx
		3, 478, 542, 795
	\end{equation}
is far from \eqref{eq:2022}. 

A graph of the error term $E_{2,2}(X,1)$ is plotted in Figure \ref{figure:D22X1}. In Figure \ref{figure:D22X1LogLogPlot}, a log-log-plot of this error term is shown, numerically suggesting that this error is bounded by $|E_{2,2}(X,1)| \le 7 X^{0.51}$, which is in favor of the conjectural bound \eqref{eq:412}.
\begin{figure}
	\includegraphics[scale=1]{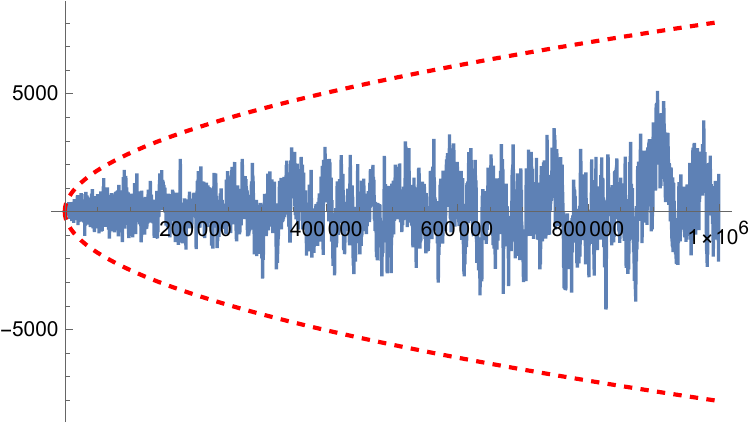}
	\caption{A plot of the error term $E_{2,2}(X,1)$ in solid blue, and $\pm 7 X^{0.51}$ in dashed red, for $X$ up to one million.}
	\label{figure:D22X1}
\end{figure}

\begin{remark}
	Unconditional lower bounds for the additive divisor sum $D_{k,\ell}(X,h)$ have been sharpened from Ng and Thom \cite{NgThom2019} by Andrade and Smith \cite{AndradeSmith2019}, who approximate, in our notation, the general divisor function $\tau_k(n)$ by partial divisor functions
	\begin{equation} \label{eq:partialdivisorfunctions}
		\tau_\ell(n,A)
		=
		\sum_{q\mid n: q\le n^A}
		\tau_{\ell-1}(q)
	\end{equation}
	parametrized by $A \in (0,1]$.
\end{remark}

\begin{remark}
	A similar quantity to the left side of \eqref{eq:412} was investigated for a special set of moduli $d=rq$ in \cite[Theorem 1, p. 35]{Nguyen2021} using the method of \cite{Zhang2014} with $d < X^{\frac{1}{2}+ \frac{1}{584}}$ for a fixed residue class $n\equiv h (d)$. This is one approach towards bounding this error term $E_{\ell, k}(X)$--maybe a weaker form of \eqref{eq:412} is sufficient for certain applications.
\end{remark}
\begin{remark}
	It would be interesting to also sum over $h$ and investigate the variance of divisor sums, such as
	\begin{equation}
		\sum_{h\le H} 
		\left|
		\sum_{n\le X} \tau_3(n) \tau_3(n+h)
		-
		M_{3,3}(X,h)
		\right|^2,
	\end{equation}
	with $M_{3,3}(X,h)$ given more precisely by \eqref{eq:1156} below and with $H=X^c$ for various ranges of $c \in (0,1]$. An analogous variance, but of the $k$-fold divisor function in arithmetic progressions, was studied by the author in \cite{Nguyen2022}.
\end{remark}

In summary, we collect in Table \ref{table:2} the conditional and unconditional results of this paper and where to find their proofs.
\begin{table}[h!]
	\caption{Summary of results and their proofs}
	\label{table:2}
	\begin{tabular}{ l l l l}
		\hline \hline
		Conditional results 
		& \vline\
		Proves in
		& \vline\ \vline \quad
		Unconditional results
		& \vline \
		Proves in
		\\ \hline \hline
		Theorem \ref{thm:D33XhFullPoly}
		& \vline\
		Section \ref{sectiion:D33XhFullPoly}
		& \vline\ \vline\quad
		Theorem \ref{thm:2b}
		& \vline\
		Section \ref{section:compositeh}
		\\
		Corollary \ref{cor:correlation}
		& \vline\
		Sections \ref{section:h=1}
		and \ref{sec:hcomposite}
		& \vline\ \vline\quad
		Theorem \ref{thm:deltamethodb}
		& \vline \
		Section \ref{section:ConreyGonek}
		\\
		Proposition \ref{prop:Sigma1111}
		& \vline \
		Section \ref{section:sigma11}
		&\vline\ \vline\quad
		Theorem \ref{theorem:D22Xh}
		& \vline\
		Section \ref{section:D22Xh}
		\\ 
		& \vline \
		&\vline\ \vline\quad
		Corollary \ref{cor:mypolynomialM33}
				& \vline\
				Appendix Mathematica
		\\
		& \vline \
		&\vline\ \vline\quad
		Corollary \ref{cor:3b}
		& \vline\
		Section \ref{section:D22Xh}
		\\
		& \vline \
		&\vline\ \vline\quad
				Proposition \ref{prop:1} 
		& \vline \ 
		Section \ref{sectiion:D33XhFullPoly}
		\\
		& \vline \
		&\vline\ \vline\quad
				Proposition \ref{prop:sigma21sigma31} 
		& \vline \ 
		Section \ref{section:4.2}
		\\
		\hline \hline
	\end{tabular}
\end{table}

\section{Lemmata}

We start by first generalizing a combinatorial Lemma of Hooley \cite[Lemma 4, p. 405]{Hooley1957} for $\tau_k(n)$.

\begin{lem}
	\label{lemma:keyDecompositionk}
	For any $n\le X$, we have
	\begin{equation} \label{eq:keyDecompositionk}
		\tau_k(n) = k \Sigma_k(n) 
		+
		O(E(n)),
	\end{equation}
	where
	\begin{align} 
		\Sigma_k(n)
		= 
		\sum_{\substack{\ell_1 \ell_2 \cdots \ell_k =n\\ \ell_1\ell_2 \cdots \ell_{k-1} \le X^{(k-1)/k};\ \ell_1 \le X^{1/k}}} 1
	\end{align}
	and
	\begin{equation}
		E(n)
		=
		\sum_{\substack{\ell_1 \ell_2 \cdots \ell_k =n\\ \ell_1\ell_2 \cdots \ell_{k-1} \le X^{(k-1)/k};\ \ell_k \le X^{1/k}}} 1.
	\end{equation}
\end{lem}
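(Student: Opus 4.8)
The plan is to generalize Hooley's argument for $\tau_3$ (his Lemma 4) to arbitrary $k$. Write $\tau_k(n)=\sum_{\ell_1\cdots\ell_k=n}1$, a sum over \emph{ordered} $k$-tuples of positive integers with product $n$. The first point is the elementary observation that, since $\ell_1\cdots\ell_k=n\le X$, the non-decreasing rearrangement $\ell_{(1)}\le\cdots\le\ell_{(k)}$ satisfies $\ell_{(1)}\cdots\ell_{(j)}\le n^{j/k}\le X^{j/k}$ for each $j<k$; in particular $\ell_{(1)}\le X^{1/k}$ and $\ell_{(1)}\cdots\ell_{(k-1)}\le X^{(k-1)/k}$. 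Thus every ordered tuple, once its smallest entry is moved to the first coordinate and its largest entry to the last, lies in the region defining $\Sigma_k(n)$, so $\Sigma_k(n)$ already "sees" every factorization of $n$. The content of the lemma is to account for (a) the multiplicity with which it sees each one — which should be exactly $k$ — and (b) the factorizations for which the reordering is degenerate or lands on the boundary, which is where $E(n)$ enters.

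I would carry this out by a Hooley-style reflection (iterated Dirichlet hyperbola method). First, designate in each ordered tuple $(\ell_1,\dots,\ell_k)$ the coordinate $m$ holding the minimal entry (smallest index in case of a tie); since $\ell_m\le X^{1/k}$ this designation always exists. Summing over the $k$ choices of $m$ and using invariance of the divisor count under permuting the other $k-1$ coordinates, those tuples in which, after sliding $\ell_m$ to the front, the product of the first $k-1$ coordinates is $\le X^{(k-1)/k}$ contribute exactly $k\,\Sigma_k(n)$; tuples with a repeated minimal entry are an $O_k(1)$ overcount of a set of size at most $E(n)$. The remaining tuples are those for which, after the reordering, $\ell_1\cdots\ell_{k-1}>X^{(k-1)/k}$, i.e. $\ell_k<nX^{-(k-1)/k}\le X^{1/k}$; a second reflection, now on the coordinate carrying the \emph{largest} entry (which is $\ge n^{1/k}\ge nX^{-(k-1)/k}$), matches these against the tuples in which the last coordinate is pinched into the window $\big[nX^{-(k-1)/k},X^{1/k}\big]$, and — using that $\ell_1\cdots\ell_{k-1}\le X^{(k-1)/k}\iff\ell_k\ge nX^{-(k-1)/k}$ — these are precisely the tuples counted by $E(n)$.

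The hard part, I expect, is the bookkeeping of the error contributions: one must check that every degenerate configuration produced along the way (repeated extremal entries, partial products landing exactly on a threshold $X^{j/k}$, and the discrepancy between the two reflections) is dominated by the single clean quantity $E(n)$ with an implied constant depending only on $k$, rather than by a proliferation of auxiliary counts, and simultaneously that the main term really carries the constant $k$ and not $k!$ or $k(k-1)$. It may be cleanest to organize the whole argument as an induction on $k$, peeling off one factor at a time and invoking the two-variable identity $\tau_2(m)=2\#\{d\mid m: d\le T\}-\#\{d\mid m: mT^{-1}\le d\le T\}$ at the innermost step, taking care that the threshold at each stage is the $X$-adic one ($T=X^{1/k}$, and later $X^{(k-1)/k}$ divided by the factors already fixed) rather than the $m$-adic $\sqrt m$. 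Finally, a little care is needed at the boundary (entries or partial products equal to a power of $X$) and in distinguishing the regimes $n\ll X^{1-\delta}$ and $n$ close to $X$, since the window defining $E(n)$ degenerates as $n\to X$.
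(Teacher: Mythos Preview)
Your plan takes a genuinely different route from the paper. The paper's entire proof is one displayed identity: it writes down inclusion--exclusion on the events $\{\ell_i>X^{1/k}\}$,
\[
\sum_{\substack{\ell_1\cdots\ell_k=n\\ \ell_1,\dots,\ell_k\le X^{1/k}}}1
=\sum_{j=0}^{k}(-1)^{j}\sum_{1\le i_1<\cdots<i_j\le k}\ \sum_{\substack{\ell_1\cdots\ell_k=n\\ \ell_{i_1},\dots,\ell_{i_j}>X^{1/k}}}1,
\]
and asserts the lemma follows, with no reflections and no induction. So the paper is, if anything, terser than you are; your proposal is much closer in spirit to Hooley's original $k=3$ argument than to what the paper actually writes.

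That said, your write-up has a real muddle at the pivot step. You first \emph{define} $m$ as the unique coordinate holding the minimum (with a tie-break), and in the next breath speak of ``summing over the $k$ choices of $m$''. These are incompatible: a uniquely determined $m$ contributes one term per tuple, not $k$. What the argument actually needs is the opposite order of quantifiers: let $m$ range freely over $\{1,\dots,k\}$ with no minimality imposed, observe that each of the $k$ resulting counts equals $\Sigma_k(n)$ by symmetry, and \emph{then} use existence of a small coordinate to show every tuple is hit at least once, with the overcount absorbed by $E(n)$. Your second reflection has the same defect: after ``sliding $\ell_m$ to the front'' you have not said which coordinate occupies position $k$, so ``the product of the first $k-1$ coordinates is $\le X^{(k-1)/k}$'' is not yet a well-defined condition on the original tuple. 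Until you fix both designations (one small coordinate to position $1$, one large coordinate to position $k$), you cannot read off why the leading constant is $k$ rather than $k-1$ or $k(k-1)$; indeed a careless count here produces $k(k-1)$, and you then have to divide back out, which is exactly the bookkeeping you flag as ``the hard part''. The inclusion--exclusion route sidesteps all of this.
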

\begin{proof}
	This follows from the identity
	\begin{align}
		\sum_{\substack{\ell_1 \cdots \ell_k = n\\ \ell_1, \cdots, \ell_k \le X^{1/k}}}
		&=
		\sum_{\substack{\ell_1 \cdots \ell_k = n}} 1
		-
		\sum_{1 \le i \le k}
		\sum_{\substack{\ell_1 \cdots \ell_k = n\\ \ell_i > X^{1/k}}} 1
		+
		\sum_{1 \le i_1 < i_2 \le k}
		\sum_{\substack{\ell_1 \cdots \ell_k = n\\ \ell_{i_1}, \ell_{i_2} > X^{1/k}}} 1
		+ 
		\cdots
		\\&
		+
		(-1)^j
		\sum_{1 \le i_{1} < \cdots < i_{j} \le k}
		\sum_{\substack{\ell_1 \cdots \ell_k = n\\ \ell_{i_1}, \cdots, \ell_{i_j} > X^{1/k}}} 1
		+ \cdots
		+
		(-1)^k
		\sum_{\substack{\ell_1 \cdots \ell_k = n\\ \ell_{1}, \cdots, \ell_{k} > X^{1/k}}} 1.
	\end{align}
\end{proof}

\begin{lem} \label{lemma:tauk(nh)}
	For any $h\ge 1$, we have
	\begin{equation} \label{eq:tau_k(nh)}
		\sum_{n=1}^\infty
		\frac{\tau_k(nh)}{n^s}
		=
		\zeta^k(s)
		A_h(s),\ (\sigma >1),
	\end{equation}
	where
	\begin{equation} \label{eq:A_h}
		A_h(s)
		=
		\prod_{p\mid h}
		\left(1-\dfrac{1}{p^s}\right)^{k}
		\binom{k+\nu_p(h) - 1}{k-1} {}_2 F_1(1,k+\nu_p(h); 1 + \nu_p(h) ; p^{-s}),
	\end{equation}
	where ${}_2 F_1$ is a hypergeometric function.
\end{lem}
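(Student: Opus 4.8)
The plan is to establish the Dirichlet series identity \eqref{eq:tau_k(nh)} by working prime-by-prime, exploiting the (quasi-)multiplicativity of $\tau_k(nh)$ as a function of $n$. First I would note that for a fixed $h$ with prime factorization $h = \prod_p p^{\nu_p(h)}$, the function $n \mapsto \tau_k(nh)$ is multiplicative when restricted to $n$ coprime to $h$ in the obvious sense, so the Dirichlet series factors as an Euler product $\sum_{n\ge 1}\tau_k(nh)n^{-s} = \prod_p \bigl(\sum_{a\ge 0}\tau_k(p^{a+\nu_p(h)})p^{-as}\bigr)$, where the inner sum runs only over $a\ge 0$ and the exponent of $p$ in the argument is shifted by $\nu_p(h)$ (which is $0$ for $p\nmid h$). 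For primes $p\nmid h$ this recovers the usual local factor $\sum_{a\ge 0}\tau_k(p^a)p^{-as} = (1-p^{-s})^{-k}$, giving the $\zeta^k(s)$ out front; for $p\mid h$ we must compare the shifted local factor to $(1-p^{-s})^{-k}$, and the ratio is precisely what should define the local factor of $A_h(s)$.

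Next I would carry out the local computation at a prime $p\mid h$ with $\nu := \nu_p(h)$. Writing $x = p^{-s}$ and using $\tau_k(p^m) = \binom{m+k-1}{k-1}$, the task is to evaluate $\sum_{a\ge 0}\binom{a+\nu+k-1}{k-1}x^a$ and show it equals $(1-x)^{-k}\cdot(1-x)^k \binom{k+\nu-1}{k-1}\,{}_2F_1(1,k+\nu;1+\nu;x)$, i.e. simply that $\sum_{a\ge 0}\binom{a+\nu+k-1}{k-1}x^a = \binom{k+\nu-1}{k-1}\,{}_2F_1(1,k+\nu;1+\nu;x)$. This is a routine hypergeometric identification: one expands ${}_2F_1(1,k+\nu;1+\nu;x) = \sum_{a\ge 0}\frac{(1)_a (k+\nu)_a}{(1+\nu)_a}\frac{x^a}{a!} = \sum_{a\ge 0}\frac{(k+\nu)_a}{(1+\nu)_a}x^a$, and then checks that $\binom{k+\nu-1}{k-1}\frac{(k+\nu)_a}{(1+\nu)_a} = \binom{a+\nu+k-1}{k-1}$ by writing both sides as ratios of factorials (Gamma functions) and simplifying. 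Multiplying the Euler product over $p\mid h$ by the extracted factor $(1-p^{-s})^k$ — which is exactly compensating for the $(1-p^{-s})^{-k}$ already absorbed into $\zeta^k(s)$ at those primes — yields the stated formula for $A_h(s)$.

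Finally I would address convergence: the manipulation is valid for $\sigma = \Re s > 1$ since $\tau_k(nh) \ll_h \tau_k(n) n^\epsilon \ll n^\epsilon$ and the Euler product converges absolutely there, while $A_h(s)$ is a finite product (over the finitely many primes dividing $h$) of functions holomorphic for $\sigma > 0$, so no subtlety arises. The only mildly delicate point — and the step I would be most careful about — is getting the hypergeometric bookkeeping exactly right, in particular matching the normalization constant $\binom{k+\nu_p(h)-1}{k-1}$ and the shifted parameters $(1,k+\nu_p(h);1+\nu_p(h))$ rather than some off-by-one variant; everything else is a direct consequence of multiplicativity and the binomial formula for $\tau_k$ on prime powers. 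It may be cleanest to present the identity in the equivalent closed form $\sum_{a\ge 0}\binom{a+\nu+k-1}{k-1}x^a = (1-x)^{-k-\nu}\cdot(\text{polynomial in }x)$ is \emph{not} what we want; rather the generating function $\sum_{a\ge 0}\binom{a+\nu+k-1}{k-1}x^a$ does not telescope to a rational function of low degree, which is exactly why the hypergeometric ${}_2F_1$ notation is the natural vehicle here.
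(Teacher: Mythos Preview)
Your proposal is correct and follows essentially the same approach as the paper: factor the Dirichlet series into an Euler product via multiplicativity, separate the primes $p\nmid h$ (which contribute $\zeta^k(s)$) from the primes $p\mid h$, and identify the shifted local sum $\sum_{j\ge 0}\binom{k+j+\nu_p(h)-1}{k-1}p^{-js}$ with $\binom{k+\nu_p(h)-1}{k-1}\,{}_2F_1(1,k+\nu_p(h);1+\nu_p(h);p^{-s})$. In fact you give more detail than the paper, which simply invokes ``a hypergeometric relation'' for this last step, whereas you verify it explicitly via the Pochhammer expansion.
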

\begin{proof}
	By multiplicativity and Euler products, we have
	\begin{align}
		\sum_{n=1}^\infty
		\frac{\tau_k(nh)}{n^s}
		&=
		\prod_{p\mid h}
		\left(
		\sum_{j=0}^\infty
		\frac{\tau_k(p^{j+\nu_p(h)})}{p^{js}}
		\right)
		\prod_{p\nmid h}
		\left(
		\sum_{j=0}^\infty
		\frac{\tau_k(p^{j})}{p^{js}}
		\right)
		\\&=
		\prod_{p\mid h}
		\frac{\displaystyle\sum_{j=0}^\infty
			\binom{k+j+\nu_p(h)-1}{k-1}
			\frac{1}{p^{js}}}{
			\displaystyle\sum_{j=0}^\infty
			\frac{\tau_k(p^{j})}{p^{js}}}
		\prod_{p}
		\left(
		\sum_{j=0}^\infty
		\frac{\tau_k(p^{j})}{p^{js}}
		\right).
	\end{align}
	By a hypergeometric relation, we have
	\begin{equation}
		\sum_{j=0}^\infty
		\binom{k+j+\nu_p(h)-1}{k-1}
		\frac{1}{p^{js}}
		=
		\binom{k+\nu_p(h) - 1}{k-1} 
		{}_2 F_1(1,k+\nu_p(h); 1 + \nu_p(h) ; p^{-s}).
	\end{equation}
	This, together with
	\begin{equation}
		\prod_{p}
		\left(
		\sum_{j=0}^\infty
		\frac{\tau_k(p^{j})}{p^{js}}
		\right)
		=
		\zeta^k(s),
	\end{equation}
	give \eqref{eq:tau_k(nh)}.
\end{proof}

\begin{lem} \label{lemma:tauknhcoprime}
	For any $h \ge 1$, we have
	\begin{equation}
		\sum_{(n,h)=1}
		\frac{\tau_k(n)}{n^s}
		=
		\zeta^k(s)
		\prod_{p \mid h}
		\left(1-\frac{1}{p^s}\right)^k,\ (\sigma>1).
	\end{equation}
\end{lem}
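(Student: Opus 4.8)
The final statement to prove is Lemma~\ref{lemma:tauknhcoprime}, which asserts that $\sum_{(n,h)=1}\tau_k(n)n^{-s}=\zeta^k(s)\prod_{p\mid h}(1-p^{-s})^k$ for $\sigma>1$.

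The plan is to exploit multiplicativity and the Euler product expansion of $\zeta^k(s)$, exactly as in the proof of Lemma~\ref{lemma:tauk(nh)}. First I would recall that $\tau_k$ is multiplicative and that $\sum_{j\ge 0}\tau_k(p^j)p^{-js}=(1-p^{-s})^{-k}$ for each prime $p$, so that $\zeta^k(s)=\prod_p(1-p^{-s})^{-k}$ for $\sigma>1$. The coprimality condition $(n,h)=1$ simply removes from the Dirichlet series every term supported on a prime dividing $h$; since the series is absolutely convergent for $\sigma>1$, it factors as an Euler product over primes, and restricting to $(n,h)=1$ means we only keep the local factors at primes $p\nmid h$. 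Concretely,
\begin{equation}
	\sum_{(n,h)=1}\frac{\tau_k(n)}{n^s}
	=\prod_{p\nmid h}\left(\sum_{j=0}^\infty\frac{\tau_k(p^j)}{p^{js}}\right)
	=\prod_{p\nmid h}\left(1-\frac{1}{p^s}\right)^{-k}.
\end{equation}

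To reach the claimed form I would then multiply and divide by the missing local factors at the primes $p\mid h$: writing
\begin{equation}
	\prod_{p\nmid h}\left(1-\frac{1}{p^s}\right)^{-k}
	=\prod_{p}\left(1-\frac{1}{p^s}\right)^{-k}\cdot\prod_{p\mid h}\left(1-\frac{1}{p^s}\right)^{k}
	=\zeta^k(s)\prod_{p\mid h}\left(1-\frac{1}{p^s}\right)^{k},
\end{equation}
which is precisely the asserted identity. This is a one-line consequence once absolute convergence justifies the rearrangement into an Euler product; the argument is essentially the same bookkeeping as in Lemma~\ref{lemma:tauk(nh)} but simpler, since no hypergeometric series appears (the coprimality condition deletes local factors rather than shifting their exponents).

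There is no real obstacle here: the only point requiring (routine) care is justifying the interchange of sum and product, which follows from absolute convergence of $\sum_n\tau_k(n)n^{-\sigma}$ for $\sigma>1$ — a standard fact, since $\tau_k(n)\ll_\varepsilon n^\varepsilon$. Everything else is formal manipulation of Euler products.
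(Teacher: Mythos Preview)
Your proof is correct and follows essentially the same approach as the paper: both pass to the Euler product, drop the local factors at primes dividing $h$, and then reinstate them to recover $\zeta^k(s)$ times the finite correction factor. The paper's version is simply the one-line compression of what you wrote.
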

\begin{proof}
	Going to Euler products gives
	\begin{equation}
		\sum_{(n,h)=1}
		\frac{\tau_k(n)}{n^s}
		=
		\prod_{p \nmid h}
		\sum_{j=0}^\infty
		\frac{\tau_k(p^j)}{p^{js}}
		= \zeta^k(s)
		\prod_{p \mid h}
		\left(1-\frac{1}{p^s}\right)^k.
	\end{equation}
\end{proof}

\section{Full main term for $D_{3,3}(X,h)$: Proof of Theorem \ref{thm:D33XhFullPoly}} \label{sectiion:D33XhFullPoly}

We start with Hooley's identity \eqref{eq:keyDecompositionk} specializing to $k=3$.

\begin{lem}
	\label{lemma:keyDecomposition}
	For any $n\le X$, we have
	\begin{equation} \label{eq:keyDecomposition}
		\tau_3(n) = 3 \Sigma_1(n) - 3 \Sigma_2(n) + \Sigma_3(n),
	\end{equation}
	where
	\begin{align} 
		\Sigma_1(n) \label{eq:Sigma1}
		&= \sum_{\substack{\ell_1 \ell_2 \ell_3 =n\\ \ell_1\ell_2 \le X^{2/3};\ \ell_1 \le X^{1/3}}} 1,\\
		\Sigma_2(n) \label{eq:Sigma2}
		&= \sum_{\substack{\ell_1 \ell_2 \ell_3 =n\\ \ell_1\ell_2 \le X^{2/3};\ \ell_1, \ell_3 \le X^{1/3}}} 1,\\
		\Sigma_3(n)  \label{eq:Sigma3}
		&= \sum_{\substack{\ell_1 \ell_2 \ell_3 =n\\ \ell_1, \ell_2 , \ell_3\le X^{1/3}}} 1.
	\end{align}
\end{lem}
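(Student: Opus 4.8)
The plan is to prove this exact combinatorial identity by combining a short inclusion–exclusion (essentially the $k=3$ case of the identity used for Lemma \ref{lemma:keyDecompositionk}) with one cyclic‑shift bijection. Throughout I read $\tau_3(n)=\#\{(\ell_1,\ell_2,\ell_3)\in\N^3:\ell_1\ell_2\ell_3=n\}$ as a count of \emph{ordered} factorizations, and I treat $\Sigma_1(n),\Sigma_2(n),\Sigma_3(n)$ as cardinalities of sets of such ordered triples. Two elementary facts are used repeatedly: for $n\le X$, any triple with $\ell_1\ell_2\ell_3=n$ cannot have all three coordinates $>X^{1/3}$ (their product would exceed $X\ge n$), and permuting the three coordinates is a bijection on the set of ordered triples with product $n$.

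First I would expand $\Sigma_3(n)$, the number of triples with all coordinates $\le X^{1/3}$, by inclusion–exclusion on the complementary events $\{\ell_i>X^{1/3}\}$. Writing $N_j(n)$ for the number of triples with a prescribed set of $j$ coordinates all $>X^{1/3}$ — this does not depend on which $j$ coordinates, by the permutation symmetry — one gets
\[
\Sigma_3(n)=\tau_3(n)-3N_1(n)+3N_2(n)-N_3(n).
\]
Since $N_3(n)=0$ for $n\le X$, this rearranges to $\tau_3(n)=\Sigma_3(n)+3\bigl(N_1(n)-N_2(n)\bigr)$. Taking $N_1(n)=\#\{\ell_1\ell_2\ell_3=n:\ \ell_1>X^{1/3}\}$ and $N_2(n)=\#\{\ell_1\ell_2\ell_3=n:\ \ell_1>X^{1/3},\ \ell_2>X^{1/3}\}$, the difference telescopes to
\[
N_1(n)-N_2(n)=\#\{(\ell_1,\ell_2,\ell_3):\ \ell_1\ell_2\ell_3=n,\ \ell_1>X^{1/3},\ \ell_2\le X^{1/3}\}.
\]

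Next I would match this with $\Sigma_1(n)-\Sigma_2(n)$. Straight from the definitions, $\Sigma_1(n)-\Sigma_2(n)=\#\{(\ell_1,\ell_2,\ell_3):\ \ell_1\ell_2\ell_3=n,\ \ell_1\le X^{1/3},\ \ell_1\ell_2\le X^{2/3},\ \ell_3>X^{1/3}\}$. The crux is that the cyclic shift $(\ell_1,\ell_2,\ell_3)\mapsto(\ell_2,\ell_3,\ell_1)$ restricts to a bijection from $\{\ell_1>X^{1/3},\ \ell_2\le X^{1/3}\}$ onto $\{\ell_1\le X^{1/3},\ \ell_3>X^{1/3},\ \ell_1\ell_2\le X^{2/3}\}$: the first two conditions transform transparently, and in the image the partial product becomes $\ell_2\ell_3=n/\ell_1<X/X^{1/3}=X^{2/3}$ because $\ell_1>X^{1/3}$; conversely the preimage $(m_1,m_2,m_3)\mapsto(m_3,m_1,m_2)$ maps the target set back into the source set, with $m_3>X^{1/3}$ again forcing $m_1m_2=n/m_3<X^{2/3}$. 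Hence $N_1(n)-N_2(n)=\Sigma_1(n)-\Sigma_2(n)$, and substituting into $\tau_3(n)=\Sigma_3(n)+3\bigl(N_1(n)-N_2(n)\bigr)$ gives $\tau_3(n)=3\Sigma_1(n)-3\Sigma_2(n)+\Sigma_3(n)$, as claimed.

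The only genuine subtlety — and the step I would write most carefully — is reconciling the \emph{asymmetric} definitions of $\Sigma_1,\Sigma_2$ (which single out the coordinate $\ell_1$ and use the ordered partial product $\ell_1\ell_2$) with the symmetric counts $N_1,N_2$ produced by inclusion–exclusion; the cyclic bijection is exactly what does this, and one must verify it is onto, not merely into, invoking $n\le X$ in both directions. As an independent check one may instead verify the identity orbit by orbit under the $S_3$-action permuting coordinates: for a fixed multiset $\{a\le b\le c\}$ with $abc=n\le X$, split into cases according to how many of $a,b,c$ exceed $X^{1/3}$ (at most two can), and confirm that the weights $3\cdot\mathbf{1}_{\Sigma_1}-3\cdot\mathbf{1}_{\Sigma_2}+\mathbf{1}_{\Sigma_3}$ sum over the orbit to its size, using only the elementary bounds $a\le n^{1/3}\le X^{1/3}$ and $ab\le n^{2/3}\le X^{2/3}$.
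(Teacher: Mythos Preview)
Your proof is correct and follows essentially the same route as the paper: both derive the identity from the inclusion--exclusion expansion stated in the proof of Lemma~\ref{lemma:keyDecompositionk}, specialized to $k=3$ (where the triple-overlap term vanishes because $n\le X$). The paper simply asserts that Lemma~\ref{lemma:keyDecomposition} is this specialization, whereas you supply the missing step---the cyclic shift identifying $N_1-N_2$ with $\Sigma_1-\Sigma_2$---which is precisely what is needed to pass from the symmetric counts produced by inclusion--exclusion to the asymmetric sums $\Sigma_1,\Sigma_2$ in the statement.
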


Substituting \eqref{eq:keyDecomposition} in for $\tau_3(n)$ in $D_{3,3}(X,h)$, we have
\begin{align} \label{eq:Sigma123}
	&D_{3,3}(X,h)
	\\&= 3 \sum_{n\le X} \tau_3(n+h) \Sigma_1(n)
	- 3 \sum_{n\le X} \tau_3(n+h) \Sigma_2(n)
	+ \sum_{n\le X} \tau_3(n+h) \Sigma_3(n)\\
	&= 3 \Sigma_{11}(X) - 3 \Sigma_{21}(X) + \Sigma_{31}(X),
\end{align}
say. Interchanging the order of summations in $\Sigma_{11}(X)$, we have
\begin{align}
	\Sigma_{11}(X)
	&= \sum_{\ell_1 \le X^{1/3}}
	\sum_{\ell_2 \le \frac{X^{2/3}}{\ell_1}}
	\sum_{\ell_3 \le \frac{X}{\ell_1 \ell_2}}
	\tau_3(\ell_1 \ell_2 \ell_3 + h).
\end{align}
Making a change of variables in the $\ell_3$ sum, we get
\begin{equation} \label{eq:426}
	\Sigma_{11}(X)
	=
	\sum_{\ell_1 \le X^{1/3}}
	\sum_{\ell_2 \le \frac{X^{2/3}}{\ell_1}}
	\sum_{\substack{n \le X+h\\ n \equiv h (\ell_1 \ell_2)}} \tau_3(n).
\end{equation}
Similarly, we obtain
\begin{equation} \label{eq:1106b}
	\Sigma_{21}(X)
	= \sum_{\ell_1 \le X^{1/3}}
	\sum_{\ell_2 \le \frac{X^{2/3}}{\ell_1}}
	\sum_{\substack{n\le \ell_1 \ell_2 X^{1/3}+h\\ n \equiv h (\ell_1 \ell_2)}}
	\tau_3(n)
\end{equation}
and
\begin{equation} \label{eq:1106bc}
	\Sigma_{31}(X)
	= 
	\sum_{\ell_1 \le X^{1/3}}
	\sum_{\ell_3 \le X^{1/3}}
	\sum_{\substack{n\le \ell_1 \ell_3 X^{1/3} +h\\ n\equiv h (\ell_1 \ell_3)}}
	\tau_3(n).
\end{equation}
We have
\begin{equation}
	\sum_{\substack{n\le Y\\ n\equiv h (q)}}
	\tau_3(n)
	=
	\frac{1}{\varphi(q)}
	\underset{\substack{s=1}}{\mathrm{Res}}
	\frac{Y^s}{s}
	\zeta^3(s)
	f_q(s)
	+ 
	E_3(Y; h, q),
\end{equation}
where
\begin{equation} \label{eq:fqs}
	f_q(s)
	= \prod_{p\mid q}
	\left(1-\frac{1}{p^s}\right)^3
\end{equation}
and, by \eqref{eq:412},
\begin{equation}
	\sum_{q \le Y^{2/3}}
	E_3(Y; h, q)
	\ll Y^{1/2 + \epsilon}.
\end{equation}
Thus, by \eqref{eq:412}, \eqref{eq:426}, \eqref{eq:1106b}, and \eqref{eq:1106bc}, $D_{3,3}(X,h)$ becomes
\begin{align} 
	D_{3,3}(X,h)
	&=
	M_{3,3}(h) +
	O_\epsilon(X^{1/2 + \epsilon}),
\end{align}
where
\begin{align} \label{eq:1156}
	M_{3,3}(h)
	&=
	3
	\underset{\substack{s=1}}{\mathrm{Res}}
	\left(
	\frac{(X+h)^s}{s} \zeta^3(s)
	\sum_{\ell_1 \le X^{1/3}}
	\sum_{\ell_2 \le \frac{X^{2/3}}{\ell_1}}
	\frac{f_{\ell_1 \ell_2} (s)}{\varphi(\ell_1 \ell_2)}
	\right)
	\\& \quad
	- 3
	\underset{\substack{s=1}}{\mathrm{Res}}
	\left(
	\frac{\zeta^3(s)}{s} 
	\sum_{\ell_1 \le X^{1/3}}
	\sum_{\ell_2 \le \frac{X^{2/3}}{\ell_1}}
	\frac{f_{\ell_1 \ell_2} (s)}{\varphi(\ell_1 \ell_2)}
	(\ell_1 \ell_2 X^{1/3} +h)^s
	\right)
	\\& \quad \quad
	+
	\underset{\substack{s=1}}{\mathrm{Res}}
	\left(
	\frac{\zeta^3(s)}{s} 
	\sum_{\ell_1 \le X^{1/3}}
	\sum_{\ell_2 \le X^{1/3}}
	\frac{f_{\ell_1 \ell_2} (s)}{\varphi(\ell_1 \ell_2)}
	(\ell_1 \ell_2 X^{1/3} +h)^s
	\right).
\end{align}
We treat the three double sums from the above by truncated Perron's formula. This involves tedious, but routine, estimates on horizontal and vertical contours, which we provide full details for ease of checking. The procedure is similar for the three, so we show full details only for the first. The result is
\begin{prop} \label{prop:1}
	Let $D_{3,3; a}(X,h)$, $D_{3,3; b}(X,h)$, and $D_{3,3; c}(X,h)$ denote the three quantities on the right side of \eqref{eq:1156}, respectively.
	We have
	\begin{align} \label{eq:520a}
		&D_{3,3; a}(X,h)
		\\ &\quad
		=
		3\underset{\substack{s=1\\ w_1=w_2=0}}{\mathrm{Res}}
		\left(
		\frac{(X+h)^s X^{\frac{1}{3}(w_1+2w_2)}}{s w_1w_2}
		\zeta^3(s)
		\zeta(w_1+w_2+1)
		\zeta(w_2+1)
		A_1(s,w_1,w_2)
		\right)
		\\ & \quad \quad
		+ O(X^{0.897}),
	\end{align}
	\begin{align} \label{eq:520b}
		\\
		&D_{3,3;b}(X,h)
		\\
		&\quad =
		- 3
		\underset{\substack{s=1\\ w_2=1, w_1=0}}{\mathrm{Res}}
		\left(
		\frac{X^{ \frac{1}{3}(w_1+2w_2+s)}}{s w_1w_2}
		\zeta^3(s)
		\zeta(w_1+w_2+1-s)
		\zeta(w_2+1-s)
		A_2(s,w_1,w_2)
		\right)
		\\& \quad \quad
		+ O(X^{0.692}),
	\end{align}
	and
	\begin{align} \label{eq:520c}
		\\
		&D_{3,3; c}(X,h) 
		\\ & \quad 
		=
		\underset{\substack{s=1\\ w_1=w_2=1}}{\mathrm{Res}}
		\left(
		\frac{X^{\frac{1}{3} (w_1+w_2+s)}}{s w_1w_2}
		\zeta^3(s)
		\zeta(w_1+1-s)
		\zeta(w_2+1-s)
		A_3(s,w_1,w_2)
		\right)
		+ O(X^{5/7 + \epsilon}).
	\end{align}
\end{prop}

\begin{proof}
We first fix a notation. Let $\lambda \ge 0$ be a number such that $\left| \zeta(1/2 + it) \right| \ll (1+|t|)^{\lambda + \epsilon}$ for every $\epsilon>0$. By Weyl's bound we may assume that $\lambda \le 1/6$. By Phragm\'en-Lindel\"of  convexity principle, one has, for $1/2 \le \sigma \le 1$ and every $\epsilon >0$, that
\begin{equation}
	\left| \zeta(\sigma + it) \right|
	\ll_\epsilon
	(1+|t|)^{2 \lambda (1-\sigma) + \epsilon},
	\quad
	(1/2 \le \sigma \le 1).
\end{equation}

By multiplicativity and going to Euler products, we have
\begin{equation} \label{eq:1152b}
	\sum_{\ell_1, \ell_2=1}^\infty
	\frac{f_{\ell_1 \ell_2} (s)}{\varphi(\ell_1 \ell_2)}
	\frac{1}{\ell_1^{w_1+w_2}}
	\frac{1}{\ell_2^{w_2}}
	= \prod_p
	\sum_{j_1, j_2}
	\frac{f_{p^{j_1+j_2}}(s)}{\varphi(p^{j_1+j_2})}
	\frac{1}{p^{j_1 (w_1 + w_2) + j_2 w_2}}.
\end{equation}
By \eqref{eq:fqs} and definition of $\varphi(n)$, the $j$'s sums become
\begin{align} \label{eq:1152}
	\\
	\sum_{j_1, j_2}
	&\frac{f_{p^{j_1+j_2}}(s)}{\varphi(p^{j_1+j_2})}
	\frac{1}{p^{j_1 (w_1  + w_2) + j_2 w_2}}
	=
	1
	+ \frac{\left(1-\frac{1}{p^s}\right)^3}{1-\frac{1}{p}}
	\sum_{j_1, j_2\ge 1}
	\frac{1}{p^{j_1( w_1 + w_2)}}
	\frac{1}{p^{ j_2 w_2}}
	\\&
	=
	1
	+ \frac{\left(1-\frac{1}{p^s}\right)^3}{1-\frac{1}{p}}
	\left(
	\frac{1}{p^{w_1 + w_2 +1} - 1}
	+
	\frac{1}{p^{w_2 +1} - 1}
	+
	\frac{1}{(p^{w_1 + w_2 +1} - 1)(p^{w_2 +1} - 1)}
	\right).
\end{align}
Thus, by \eqref{eq:1152b} and \eqref{eq:1152}, we get
\begin{equation} \label{eq:1152c}
	\sum_{\ell_1, \ell_2=1}^\infty
	\frac{f_{\ell_1 \ell_2} (s)}{\varphi(\ell_1 \ell_2)}
	\frac{1}{\ell_1^{w_1+w_2}}
	\frac{1}{\ell_2^{w_2}}
	= \zeta(w_1 + w_2 +1)
	\zeta(w_2 +1)
	A_1(s, w_1, w_2),
\end{equation}
where $A_1(s, w_1, w_2)$ is given as in \eqref{eq:A1}. The function above is analytic in the region
\begin{equation}
	\Re(w_2) > 0
	\text{ and }
	\Re(w_1) > - \Re(w_2),
\end{equation}
with $A_1(s, w_1, w_2)$ analytic in larger regions from \eqref{eq:niceregions}. Hence, by Perron's formula, we have
\begin{align} \label{eq:131}
	&\sum_{\ell_1 \le X^{1/3}}
	\sum_{\ell_2 \le \frac{X^{2/3}}{\ell_1}}
	\frac{f_{\ell_1 \ell_2} (s)}{\varphi(\ell_1 \ell_2)}
	\\& \quad =
	\frac{1}{(2\pi i)^2}
	\int\limits_{\epsilon - iT_1}^{\epsilon + i T_1}
	\int\limits_{\epsilon - iT_2}^{\epsilon + i T_2}
	\frac{X^{w_1/3}}{w_1}
	\frac{X^{2w_2/3}}{w_2}
	\zeta(w_1 + w_2 +1)
	\zeta(w_2 +1)
	A_1(s, w_1, w_2)
	d w_2 dw_1
	\\& \quad \quad
	+ O\left( \frac{X^{\epsilon}}{ T_1 T_2}\right),
\end{align}
for parameters $T_1$ and $T_2$ to be chosen later. 
We shift first the $w_2$ contour in the above left to the vertical segment from $\sigma_2 - iT_2$ to $\sigma_2 + iT_2$, where $-1/2 < \sigma_2 < 0$ is to be determined. We pick up the residue at $w_2 = 0$, two horizontal contours each of size $\ll X^{\epsilon} T_2^{-1} + X^{2\sigma_2/3} T_2^{-1 + 2 \lambda |\sigma_2| + \epsilon}$, and the left vertical contour at real part $\sigma_2$ of size $\ll X^{2\sigma_2/3} T_2^{2 \lambda |\sigma_2| +\epsilon}$. Since $X^{2\sigma_2/3} T_2^{-1 + 2 \lambda |\sigma_2| + \epsilon} \ll X^{2\sigma_2/3} T_2^{2 \lambda |\sigma_2| +\epsilon}$, we will ignore it. We will also ignore the error term in \eqref{eq:131}, since it is $\ll X^{\epsilon}(T_1^{-1} + T_2^{-1})$. Setting $T_2^{-1} = X^{2\sigma_2/3} T_2^{2 \lambda |\sigma_2|}$, we get
\begin{equation}
	T_2 = X^{2 |\sigma_2|/(3 + 6 \lambda |\sigma_2|)}.
\end{equation}
With this, \eqref{eq:131} becomes
\begin{align} \label{eq:131b}
	&\sum_{\ell_1 \le X^{1/3}}
	\sum_{\ell_2 \le \frac{X^{2/3}}{\ell_1}}
	\frac{f_{\ell_1 \ell_2} (s)}{\varphi(\ell_1 \ell_2)}
	\\& \quad =
	\frac{1}{2\pi i}
	\int\limits_{\epsilon - iT_1}^{\epsilon + i T_1}
	\frac{X^{w_1/3}}{w_1}
	\left[
	\underset{\substack{w_2=0}}{\mathrm{Res}}
	\left(
	\frac{X^{2w_2/3}}{w_2}
	\zeta(w_1 + w_2 +1)
	\zeta(w_2 +1)
	A_1(s, w_1, w_2)
	\right)
	\right.
	\\& \left. \quad\quad\quad\quad\quad\quad\quad\quad\quad\quad
	+ O \left( X^{- 2 |\sigma_2|/(3 + 6 \lambda |\sigma_2|) +\epsilon}  \right)
	\right]
	dw_1
	\\& \quad =
	\underset{\substack{w_2=0}}{\mathrm{Res}}
	\left(
	\frac{X^{2w_2/3}}{w_2}
	\zeta(w_2 +1)
	\frac{1}{2\pi i}
	\int\limits_{\epsilon - iT_1}^{\epsilon + i T_1}
	\frac{X^{w_1/3}}{w_1}
	\zeta(w_1 + w_2 +1)
	A_1(s, w_1, w_2)
	dw_1
	\right)
	\\& \quad \quad\quad
	+ O \left( X^{- 2 |\sigma_2|/(3 + 6 \lambda |\sigma_2|) +\epsilon} T_1^\epsilon  \right).
\end{align}
Similarly, we now shift the remaining $w_1$ contour in the above left to the vertical segment from $\sigma_1 + \epsilon - iT_1$ to $\sigma_1 + \epsilon + iT_1$, with $\sigma_1 = - \sigma_2 - 1/2$, so that $\sigma_1 + \epsilon + \sigma_2 + 1 = 1/2 + \epsilon$. We pick up the residue at $w_1 = 0$, two horizontal contours each of size $\ll X^\epsilon T_1^{-1} + X^{(-\sigma_2 - 1/2)/3+\epsilon} T_1^{-1 + \lambda + \epsilon}$ and the left vertical contour at real part $\sigma_1 +\epsilon$ of size $\ll X^{(-\sigma_2 - 1/2)/3+\epsilon} T_1^{ \lambda + \epsilon}$. As before, ignoring the second error term and setting $T_1^{-1} = X^{(-\sigma_2 - 1/2)/3+\epsilon} T_1^{ \lambda }$, we obtain
\begin{equation}
	T_1 = X^{(1-2 |\sigma_2|)/(6+6\lambda)}.
\end{equation}
With this, \eqref{eq:131b} becomes
\begin{align} \label{eq:131c}
	\quad \quad
	&\sum_{\ell_1 \le X^{1/3}}
	\sum_{\ell_2 \le \frac{X^{2/3}}{\ell_1}}
	\frac{f_{\ell_1 \ell_2} (s)}{\varphi(\ell_1 \ell_2)}
	=
	\underset{\substack{w_1=w_2=0}}{\mathrm{Res}}
	\left(
	\frac{X^{2w_2/3}}{w_2}
	\zeta(w_2 +1)
	\frac{X^{w_1/3}}{w_1}
	\zeta(w_1 + w_2 +1)
	A_1(s, w_1, w_2)
	\right)
	\\& \quad
	+ O \left( X^{- 2 |\sigma_2|/(3 + 6 \lambda |\sigma_2|) - (1-2 |\sigma_2|)/(6+6\lambda) + \epsilon} \right).
\end{align}
Setting $2 |\sigma_2|/(3 + 6 \lambda |\sigma_2|) = (1-2 |\sigma_2|)/(6+6\lambda)$, we get
\begin{equation} \label{eq:126}
	|\sigma_2| = 
	\begin{cases}
	1/6, & \text{if } \lambda=0,
	\\
	\dfrac{\sqrt{\lambda^2+10 \lambda+9}-\lambda-3}{4 \lambda},& \text{if } \lambda \in (0,1/6),
	\\
	0.1553, & \text{if } \lambda=1/6	,
	\end{cases}
\end{equation}
Thus, with the above choice for $\sigma_2$, \eqref{eq:131c} becomes
\begin{align} \label{eq:131d}
	&\sum_{\ell_1 \le X^{1/3}}
	\sum_{\ell_2 \le \frac{X^{2/3}}{\ell_1}}
	\frac{f_{\ell_1 \ell_2} (s)}{\varphi(\ell_1 \ell_2)}
	\\& \quad =
	\underset{\substack{w_1=w_2=0}}{\mathrm{Res}}
	\left(
	\frac{X^{2w_2/3}}{w_2}
	\zeta(w_2 +1)
	\frac{X^{w_1/3}}{w_1}
	\zeta(w_1 + w_2 +1)
	A_1(s, w_1, w_2)
	\right)
	+ O \left( X^{-\nu_\lambda + \epsilon}
	\right),
\end{align}
where
\begin{equation}
	\nu_\lambda = 
	\begin{cases}
	-1/9, & \text{if } \lambda = 0,
	\\
	-
	\dfrac{\sqrt{\lambda^2+10 \lambda+9}-\lambda-3}{6 \lambda}, & \text{if } \lambda \in (0,1/6),
	\\
	-  0.1035 , & \text{if } \lambda = 1/6.
	\end{cases}
\end{equation}
Thus, by \eqref{eq:131d}, the first term on the right side of \eqref{eq:1156} becomes
\begin{align} \label{eq:450}
	&3\underset{\substack{s=1\\ w_1=w_2=0}}{\mathrm{Res}}
	\left(
	\frac{(X+h)^s X^{\frac{1}{3}(w_1+2w_2)}}{s w_1w_2}
	\zeta^3(s)
	\zeta(w_1+w_2+1)
	\zeta(w_2+1)
	A_1(s,w_1,w_2)
	\right)
	\\& \quad + O \left( X^{1-\nu_\lambda + \epsilon}
	\right).
\end{align}
For, e.g, $\lambda = 1/6$, the above error term is $\ll X^{0.897}$. This gives \eqref{eq:520a}. 

To treat the second double sum on the right side of \eqref{eq:1156}, we first break $h$ into three cases: $1 \le h \le X^{1/3}$, $X^{1/3} < h \le X^{2/3}$, and $h > X^{2/3}$, then split up the $\ell_1, \ell_2$ sums according to $\ell_1 \ell_2 \ge h/X^{1/3}$ or $\ell_1 \ell_2 < h/X^{1/3}$.

Case 1: $1 \le h < X^{1/3}$. In this case, there are no $\ell_1, \ell_2\ge 1$ such that $\ell_1 \ell_2 < h/X^{1/3}$ so this possibility does not occur. Thus, $\ell_1 \ell_2 X^{1/3} > h$ for all $\ell_1, \ell_2 \ge 1$, and we have
\begin{align} \label{eq:454}
	(\ell_1 \ell_2 X^{1/3} + h)^s
	&=
	(\ell_1 \ell_2 X^{1/3} + h)^s
	\left( 1 + \frac{h}{\ell_1 \ell_2 X^{1/3}} \right)^s
	\\&=
	(\ell_1 \ell_2 X^{1/3} + h)^s
	\left(1 +
	\sum_{j=1}^\infty
	\binom{s}{j}
	\left(\frac{h}{\ell_1 \ell_2 X^{1/3}}\right)^j
	\right).
\end{align}
We note that the series above is absolutely convergent since all terms are non-negative and $h/\ell_1 \ell_2 X^{1/3} <1$ for all $\ell_1, \ell_2\ge 1$. Thus, with this, we can write the double sum of the second term on the right side of \eqref{eq:1156} as
\begin{align} \label{eq:442}
	&\sum_{\ell_1 \le X^{1/3}}
	\sum_{\ell_2 \le \frac{X^{2/3}}{\ell_1}}
	\frac{f_{\ell_1 \ell_2} (s)}{\varphi(\ell_1 \ell_2)}
	(\ell_1 \ell_2 X^{1/3} +h)^s
	\\& \quad = 
	(X^{1/3} + h)^s
	\sum_{\ell_1 \le X^{1/3}}
	\sum_{\ell_2 \le \frac{X^{2/3}}{\ell_1}}
	\frac{f_{\ell_1 \ell_2} (s)}{\varphi(\ell_1 \ell_2)}
	(\ell_1 \ell_2)^s
	\left(1 +
	\sum_{j=1}^\infty
	\binom{s}{j}
	\left(\frac{h}{\ell_1 \ell_2 X^{1/3}}\right)^j
	\right).
\end{align}
The $j\ge 1$ terms from the above will contribute a negligible amount and therefore be absorbed into the error term. For $j=0$, following the same procedure as for the first double sum, we find
\begin{align}
	&
	\sum_{\ell_1 \le X^{1/3}}
	\sum_{\ell_2 \le \frac{X^{2/3}}{\ell_1}}
	\frac{f_{\ell_1 \ell_2} (s)}{\varphi(\ell_1 \ell_2)}
	(\ell_1 \ell_2)^s
	\\& \quad
	=
	\frac{1}{(2 \pi i)^2}
	\int\limits_{1 - i T_1}^{1+iT_1}
	\int\limits_{1 +\epsilon  - i T_1}^{1+ \epsilon +iT_1}
	\frac{X^{w_1/3}}{w_1}
	\frac{X^{2w_2/3}}{w_2}
	\zeta(w_1 + w_2 +1 -s)
	\zeta(w_2 + 1 - s)
	A_2(s, w_1 - s, w_2-s)
	dw_2 dw_1
	\\ & \quad\quad
	+ O(X^{1+\epsilon} (T_1 T_2)^{-1}).
\end{align}
However, unlike the previous double sum, the error term above cannot be ignored so we keep it until the end. For this double sum we shift the $w_2$ integral in the above left to $\sigma_2=1/2+\epsilon$ then shift the $w_1$ integral left to $\sigma_1 = 1-\epsilon$. The four horizontal contours contribute $\ll X^{2/3+\epsilon} T_2^{-1} 
+ X^{2\sigma_2/3 + \epsilon} T_2^{-1+2 \lambda (1-\sigma_2) + \epsilon} 
+ X^{1/3} T_1^{-1} 
+ X^{(3/2-\sigma_2)/3} T_1^{-1 + \lambda + \epsilon} $. The two left vertical contours contribute $\ll X^{2\sigma_2/3 + \epsilon} T_2^{2 \lambda (1-\sigma_2)} 
+ X^{(3/2-\sigma_2)/3} T_1^{\lambda + \epsilon}$. Setting $X^{2/3} T_2^{-1} = X^{2\sigma_2/3} T_2^{2 \lambda (1-\sigma_2)}$, and $X^{1/3} T_1^{-1}  = X^{(3/2-\sigma_2)/3} T_1^{\lambda}$, we find $T_1 \gg X^{4/13 - \epsilon}$ and $T_2 \gg X^{4/13 - \epsilon}$, for $\lambda=1/6$ and $\sigma_2=1/2+\epsilon$. Thus, all error terms add up to
\begin{equation}
	\ll X^\epsilon
	\left( 
	X^{1-\frac{4}{13}-\frac{4}{13}}
	+ X^{2/3 - 4/13}
	+ X^{1/3 - 4/13}
	\right)
	\ll X^{14/39 + \epsilon}.
\end{equation}
Multiplying this error term by $(X+h)^{1/3}$, the error term \eqref{eq:442} is $$\ll X^{14/39+1/3 + \epsilon} =  X^{9/13 + \epsilon},$$ with the main term given by the corresponding residues. Since this error term, which is roughly $\ll X^{0.692}$, is way $\ll X^{0.897}$ from the error term of the first double sum \eqref{eq:450}, we can ultimately ignore it. The other two cases can be handled similarly. We indicate the main differences.

In the second case, where $X^{1/3} < h < X^{2/3}$, we have $h/(\ell_1 X^{1/3}) \ge 1$ iff $\ell_1 \le h/ X^{1/3}$. Hence, we write the double sum in the second term of the right side of \eqref{eq:1156} as
\begin{align} \label{eq:506}
	\\
	&\sum_{\ell_1 \le X^{1/3}}
	\sum_{\ell_2 \le \frac{X^{2/3}}{\ell_1}}
	\frac{f_{\ell_1 \ell_2} (s)}{\varphi(\ell_1 \ell_2)}
	(\ell_1 \ell_2 X^{1/3} +h)^s
	\\& \quad
	=
	\sum_{\ell_1 \le \frac{h}{X^{1/3}}}
	\sum_{\frac{h}{\ell_1 X^{1/3}} \le \ell_2 \le \frac{X^{2/3}}{\ell_1}}
	\frac{f_{\ell_1 \ell_2} (s)}{\varphi(\ell_1 \ell_2)}
	(\ell_1 \ell_2 X^{1/3} +h)^s
	+
	\sum_{\ell_1 \le X^{1/3}}
	\sum_{\ell_2 < \frac{h}{\ell_1 X^{1/3}}}
	\frac{f_{\ell_1 \ell_2} (s)}{\varphi(\ell_1 \ell_2)}
	(\ell_1 \ell_2 X^{1/3} +h)^s.
\end{align}
For the first term on the right of the above, we have $\ell_1 \ell_2 X^{1/3} \ge h$ and we factor $(\ell_1 \ell_2 X^{1/3} +h)^s$ as in \eqref{eq:442}. For the second term on the right of the above, we have $\ell_1 \ell_2 X^{1/3} < h$, so we write $(\ell_1 \ell_2 X^{1/3} +h)^s$ as
\begin{equation}
	h^s \left( 1+ \frac{\ell_1 \ell_2 X^{1/3}}{h} \right)^s
	=
	h^s
	\left(
	1
	+ \sum_{j=1}^{\infty}
	\binom{s}{j}
	\left(\frac{\ell_1 \ell_2 X^{1/3}}{h}\right)^j
	\right).
\end{equation}

In the last case, where $X^{2/3} \le h \le X$, we have $h/(\ell_1 X^{1/3}) \ge 1$ always, so we write the double sum in the second term of the right side of \eqref{eq:1156} as 
\begin{align} \label{eq:506b}
	\sum_{\ell_1 \le X^{1/3}}
	\sum_{\ell_2 < \frac{h}{\ell_1 X^{1/3}}}
	\frac{f_{\ell_1 \ell_2} (s)}{\varphi(\ell_1 \ell_2)}
	(\ell_1 \ell_2 X^{1/3} +h)^s
	+
	\sum_{\ell_1 \le X^{1/3} }
	\sum_{\frac{h}{\ell_1 X^{1/3}} \le \ell_2 \le \frac{X^{2/3}}{\ell_1}}
	\frac{f_{\ell_1 \ell_2} (s)}{\varphi(\ell_1 \ell_2)}
	(\ell_1 \ell_2 X^{1/3} +h)^s,
\end{align}
and factor $(\ell_1 \ell_2 X^{1/3} +h)^s$ as in the second case. The error terms from these two cases will be no more than that of the first case, which is $\ll X^{0.692}$, since $h \le X$ for all three cases. This gives \eqref{eq:520b}. 

Similarly, we obtain \eqref{eq:520c}, noting that the error term here comes from the choices $\sigma_1 = \sigma_2 = 1/2 +\epsilon$, $T_1 = T_2 = X^{1/7 - \epsilon}$, which yields
\begin{equation}
	\ll X^{1+\epsilon} (T_1 T_2)^{-1}
	\ll X^{5/7 + \epsilon}
\end{equation}
for the error term of the last term on the right side of \eqref{eq:1156}.
\end{proof}
This completes the proof of Theorem \ref{thm:D33XhFullPoly}. \qed

\section{Conditional proof of the leading order asymptotic for the correlation sum $D_{3,3}(X,h)$: Proof of Corolary \ref{cor:correlation}}
\label{section:h=1}

Let $h=1$ (the case for $h>1$ is treated in the next section). Recall that $\Sigma_{11}(X)$, $\Sigma_{21}(X)$, and $\Sigma_{31}(X)$ are given by \eqref{eq:426}, \eqref{eq:1106b}, and \eqref{eq:1106bc}, respectively. In this section, we will evaluate $\Sigma_{11}(X)$ asymptotically (see Proposition \ref{prop:Sigma1111}), and give bounds of order strictly smaller than $\Sigma_{11}(X)$ for $\Sigma_{21}(X)$ and $\Sigma_{31}(X)$ (see Proposition \ref{prop:sigma21sigma31}).

\subsection{Using the conditional level of distribution for $\tau_3(n)$ in AP's to evaluate the sum $\Sigma_{11}(X)$} \label{section:sigma11}

We treat the most inner sum in \eqref{eq:426} using an averaged level of distribution for $\tau_3(n)$.

The main term in \eqref{eq:412} is explicit.
\begin{lem}\label{lemma:tau}
	For any $q\ge 1$, we have
	\begin{align} \label{eq:tau3Corpime}
		\frac{1}{\varphi(q)}
		\sum_{\substack{n\le X\\ (n,q)=1}} \tau_3(n)
		&= X \left(a_1(q) \log^2 X
		+ a_2(q) \log X
		+ a_3(q) \right)
		\\&+ O\left(\frac{\tau(q) X^{2/3} \log X}{\varphi(q)} \right),
	\end{align}
	where
	\begin{align} 
		a_1(q) \label{eq:a1}
		&= \frac{1}{2}
		\frac{\varphi(q)^2}{q^3},
		\\
		a_2(q) \label{eq:a2}
		&=
		\frac{\varphi(q)^2}{q^3}
		\left(3\gamma - \frac{7}{6}
		+ \frac{7}{3}
		\sum_{p\mid q} \frac{\log p}{p-1} \right),
		\\
		a_3(q) \label{eq:a3}
		&=
		\frac{\varphi(q)^2}{q^3}
		\left(3\gamma^2 - 3 \gamma + 3\gamma_1
		+
		\sum_{p\mid q} \frac{\log p}{p-1}
		\left(4\gamma - 3 + \sum_{p\mid q} \frac{\log p}{p-1} \right)\right).
	\end{align}
\end{lem}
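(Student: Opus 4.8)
The plan is to evaluate the coprime-restricted sum $\frac{1}{\varphi(q)}\sum_{n\le X,\,(n,q)=1}\tau_3(n)$ by extracting it from a Dirichlet series via Perron's formula. By Lemma \ref{lemma:tauknhcoprime} we have the identity
\begin{equation}
	\sum_{(n,q)=1}\frac{\tau_3(n)}{n^s}=\zeta^3(s)\prod_{p\mid q}\left(1-\frac{1}{p^s}\right)^3=:F_q(s),\qquad (\Re s>1).
\end{equation}
Applying Perron's formula (or a contour-integration/Mellin argument) gives
\begin{equation}
	\sum_{\substack{n\le X\\(n,q)=1}}\tau_3(n)=\frac{1}{2\pi i}\int_{(c)}F_q(s)\frac{X^s}{s}\,ds,
\end{equation}
and the main term comes from the residue at $s=1$, where $F_q(s)X^s/s$ has a pole of order three coming from $\zeta^3(s)$. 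First I would shift the contour past $s=1$, pick up the triple-pole residue, and bound the remaining integral (after the usual truncation at height $T$ and standard zero-free-region/convexity estimates, or a crude bound exploiting that only a short vertical segment is needed since $\tau_3$ is small) to obtain the error term $O(\tau(q)X^{2/3}\log X/\varphi(q))$; the factor $\tau(q)$ tracks the size of $\prod_{p\mid q}(1-p^{-s})^3$ on the critical line, and the $X^{2/3}$ exponent is what a classical elementary treatment of $\sum\tau_3(n)$ yields.

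The core computation is then the residue at $s=1$. Write $G_q(s)=\prod_{p\mid q}\left(1-p^{-s}\right)^3$, which is holomorphic and nonvanishing near $s=1$, and expand $\zeta(s)=\frac{1}{s-1}+\gamma-\gamma_1(s-1)+\cdots$ so that $\zeta^3(s)=(s-1)^{-3}+3\gamma(s-1)^{-2}+(3\gamma^2+3\gamma_1)(s-1)^{-1}+\cdots$ (using the standard Stieltjes-constant expansion). Also $X^s/s=X\cdot X^{s-1}/s$, with $X^{s-1}=1+(s-1)\log X+\tfrac12(s-1)^2\log^2 X+\cdots$ and $1/s=1-(s-1)+(s-1)^2-\cdots$, and $G_q(s)=G_q(1)+G_q'(1)(s-1)+\tfrac12 G_q''(1)(s-1)^2+\cdots$. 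The residue is the coefficient of $(s-1)^{-1}$ in the product of these four Taylor/Laurent series; multiplying out produces precisely a quadratic polynomial in $\log X$ with leading coefficient $\tfrac12 G_q(1)X$, i.e. $a_1(q)=\tfrac12\varphi(q)^2/q^3$ since $G_q(1)=\prod_{p\mid q}(1-1/p)^3=(\varphi(q)/q)^3$ and dividing by the outer $\varphi(q)$ — wait, more precisely the stated $a_1(q)$ already absorbs the $1/\varphi(q)$, giving $\varphi(q)^2/q^3$. The logarithmic derivatives of $G_q$ at $s=1$ are $\frac{d}{ds}\log G_q(s)=3\sum_{p\mid q}\frac{p^{-s}\log p}{1-p^{-s}}$, which at $s=1$ equals $3\sum_{p\mid q}\frac{\log p}{p-1}$; this is the source of every $\sum_{p\mid q}\log p/(p-1)$ term appearing in $a_2(q)$ and $a_3(q)$, and the second derivative contributes the squared sum in $a_3(q)$. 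Collecting the coefficient of $(s-1)^{-1}$ and organizing by powers of $\log X$ yields exactly the claimed $a_1,a_2,a_3$.

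The main obstacle is bookkeeping rather than conceptual: one must carry the Laurent expansion of $\zeta^3$ to the correct order, keep the $1/s$ and $X^{s-1}$ expansions to matching order, and correctly differentiate the Euler-product factor $G_q$ twice, then cross-multiply four series and isolate the residue without dropping or double-counting cross terms — this is where sign errors and missing $\gamma_1$ or $\sum_{p\mid q}\log p/(p-1)$ contributions typically creep in. A secondary point requiring a little care is making the error term uniform in $q$: one needs the contour shift to work with the bound scaling like $\tau(q)/\varphi(q)$, which follows because on the line $\Re s=2/3$ the factor $G_q(s)$ is $\ll\prod_{p\mid q}(1+p^{-2/3})^3\ll\tau(q)^3$ or, more sharply, one absorbs it into $\tau(q)$ by a divisor-bound estimate; either way the power of $X$ and $\log X$ is dictated by the $\zeta^3$ pole and the chosen contour. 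I would finish by double-checking the polynomial against the unconditional classical asymptotic for $\sum_{n\le X}\tau_3(n)$ in the case $q=1$ (where $\varphi(1)^2/1^3=1$ and the $p\mid q$ sums are empty), which recovers the known leading constant $\tfrac12 X\log^2 X$.
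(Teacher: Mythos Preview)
The paper does not actually prove this lemma; it simply cites an external reference (\cite[Lemma 51, p.~153]{NguyenThesis}). Your outline via Perron's formula and the triple-pole residue of $\zeta^3(s)\prod_{p\mid q}(1-p^{-s})^3\cdot X^s/s$ at $s=1$ is the standard route and correctly identifies the source of every term in $a_1,a_2,a_3$, including the appearance of $\sum_{p\mid q}\frac{\log p}{p-1}$ from the logarithmic derivative of $G_q$.

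One point deserves tightening: obtaining the error $O(\tau(q)X^{2/3}\log X/\varphi(q))$ by shifting the contour to $\Re s=2/3$ is not straightforward, since bounding $\int|\zeta(\tfrac23+it)|^3\,|t|^{-1}\,dt$ requires mean-value input and does not directly yield a clean $X^{2/3}$ power saving uniformly in $q$. The exponent $2/3$ here really comes from the elementary three-dimensional hyperbola method applied to $\sum_{abc\le X,\,(abc,q)=1}1$ (with M\"obius to remove the coprimality condition), which is what your parenthetical ``classical elementary treatment'' alludes to; I would make that the primary argument for the error term and reserve the contour integral purely for reading off the residue polynomial. With that adjustment the sketch is complete.
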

\begin{proof}
	See, e.g, \cite[Lemma 51, p. 153]{NguyenThesis}.
\end{proof}

Thus, assuming Conjecture \ref{conj:LevelofDistribution} for $k=3$, we get, by \eqref{eq:426} and
\eqref{eq:tau3Corpime}, that
\begin{align} \label{eq:427}
	\Sigma_{11}(X)
	\sim
	(X+1)
	\left(
	b_1(X) \log^2(X+1)
	+ b_2(X) \log(X+1)
	+ b_3(X) \right)
	+ E(X),
\end{align}
where
\begin{align}
	b_1(X)  \label{eq:b1}
	&= \sum_{\ell_1 \le X^{1/3}}
	\sum_{\ell_2 \le \frac{X^{2/3}}{\ell_1}}
	a_1(\ell_1\ell_2),
	\\
	b_2(X) \label{eq:b2}
	&=
	\sum_{\ell_1 \le X^{1/3}}
	\sum_{\ell_2 \le \frac{X^{2/3}}{\ell_1}}
	a_2(\ell_1\ell_2),
	\\
	b_3(X) \label{eq:b3b}
	&=
	\sum_{\ell_1 \le X^{1/3}}
	\sum_{\ell_2 \le \frac{X^{2/3}}{\ell_1}}
	a_3(\ell_1\ell_2),
\end{align}
and
\begin{equation} \label{eq:E(X)}
	E(X) 
	= (X+1)^{2/3} \log(X+1)
	\sum_{\ell_1 \le X^{1/3}}
	\sum_{\ell_2 \le \frac{X^{2/3}}{\ell_1}}
	\frac{\tau(\ell_1 \ell_2)}{\varphi(\ell_1 \ell_2)}.
\end{equation}
We will evaluate $b_1(X)$ asymptotically and estimate $b_2(X)$, $b_3(X)$, and $E(X)$ below.

\subsubsection{Evaluation of $b_1(X)$}

By \eqref{eq:b1} and \eqref{eq:a1}, we have
\begin{equation} \label{eq:554a}
	b_1(X)
	= \frac{1}{2}
	\sum_{\ell_1 \le X^{1/3}}
	\sum_{\ell_2 \le \frac{X^{2/3}}{\ell_1}}
	\frac{\varphi(\ell_1 \ell_2)^2}{(\ell_1 \ell_2)^3}.
\end{equation}
We evaluate $b_1(X)$ in the following lemma.
\begin{lem} \label{lemma:b1(X)}
	There are computable constants $c_1$ and $c_2$ such that
	\begin{equation} \label{eq:b_1(X)a}
		b_1(X) 
		=
		\frac{1}{12}
		\prod_p
		\left(
		1- \frac{4}{p^2}
		+ \frac{4}{p^3}
		- \frac{1}{p^4}
		\right)
		\log^2(X)
		+ c_1 \log X
		+ c_2
		+ O_\epsilon\left(X^{-\frac{2}{7} +\epsilon} \right).
	\end{equation}
\end{lem}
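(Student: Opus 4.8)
## Proof Proposal

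The plan is to evaluate the double sum $b_1(X) = \tfrac12 \sum_{\ell_1 \le X^{1/3}} \sum_{\ell_2 \le X^{2/3}/\ell_1} \varphi(\ell_1\ell_2)^2/(\ell_1\ell_2)^3$ by a two-variable Perron/Mellin argument, exactly parallel to the one already carried out in Section \ref{sectiion:D33XhFullPoly} for the main term $M_{3,3}(X,1)$. First I would write $g(n) := \varphi(n)^2/n^3$, which is multiplicative, and open the two summation conditions $\ell_1 \le X^{1/3}$ and $\ell_1\ell_2 \le X^{2/3}$ by Perron's formula in variables $w_1, w_2$, so that
\begin{equation}
	b_1(X) = \frac{1}{2}\cdot\frac{1}{(2\pi i)^2} \int_{(2)}\int_{(2)} \frac{X^{w_1/3}}{w_1}\frac{X^{2w_2/3}}{w_2} \sum_{\ell_1,\ell_2=1}^\infty \frac{g(\ell_1\ell_2)}{\ell_1^{w_1+w_2}\ell_2^{w_2}}\, dw_1\, dw_2 .
\end{equation}
Next I would compute the Dirichlet series by an Euler product: since $g(p^j) = (1-1/p)^2$ for all $j\ge 1$, the local factor is $1 + (1-1/p)^2\big(\tfrac{1}{p^{w_1+w_2}-1} + \tfrac{1}{p^{w_2}-1} + \tfrac{1}{(p^{w_1+w_2}-1)(p^{w_2}-1)}\big)$, which I would factor as $\zeta(w_1+w_2)\,\zeta(w_2)\,B(w_1,w_2)$ with $B$ a Dirichlet series absolutely convergent in a neighborhood of $w_1=w_2=0$ (after pulling out the two simple poles of the zetas, the remaining Euler product converges for $\Re w_1, \Re w_2$ slightly negative). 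This is the direct analogue of the passage from \eqref{eq:1152b}--\eqref{eq:1152} to the $A_1$ factor.

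Then I would move both contours to the left, past $w_2 = 0$ and $w_1+w_2 = 0$, picking up the residues. The dominant contribution comes from the double pole structure at $w_1 = w_2 = 0$: the integrand has, crudely, a pole of order $2$ from $\zeta(w_1+w_2)\zeta(w_2)$ against the $1/(w_1 w_2)$, and expanding $X^{w_1/3}X^{2w_2/3} = \exp((w_1/3 + 2w_2/3)\log X)$ produces the $\log^2 X$ leading term together with lower-order $\log X$ and constant terms; the coefficient of $\log^2 X$ will be $\tfrac12 \cdot \tfrac{1}{6} B(0,0) = \tfrac{1}{12} B(0,0)$ after the combinatorics of the residue, and evaluating $B(0,0)$ as an Euler product gives exactly $\prod_p(1 - 4/p^2 + 4/p^3 - 1/p^4) = \prod_p(1-1/p)^2(1+2/p - 1/p^2)$ — matching \eqref{eq:b_1(X)a}. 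Shifting the contours to $\Re w_i = -\tfrac13 + \epsilon$ (staying inside the region of absolute convergence of $B$ and using standard convexity/zero-free-region bounds for $\zeta$, or simply the polynomial-growth bounds since we can keep the contours in a region where $\zeta$ is bounded) leaves a remainder of size $O_\epsilon(X^{w_1/3 + 2w_2/3 + \epsilon}) = O_\epsilon(X^{-2/3+\epsilon})$, which is the claimed error term. The constants $c_1$ and $c_2$ are then simply the $\log X$- and constant-order residue terms, manifestly computable (and indeed computed via Mathematica in the Appendix, consistent with the remark there).

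The main obstacle I anticipate is bookkeeping rather than conceptual: carefully organizing the residue at the (non-isolated, intersecting) polar divisors $w_2 = 0$ and $w_1 + w_2 = 0$ so that the iterated residue is unambiguous, and checking that after extracting $\zeta(w_1+w_2)\zeta(w_2)$ the leftover Euler product $B(w_1,w_2)$ really is analytic and bounded in a neighborhood of the origin slightly into $\Re w_i < 0$ (this is where the explicit local computation $g(p^j) = (1-1/p)^2$ for $j \ge 1$ is essential — it guarantees the series for $B$ has its first "bad" prime-power term of size $O(p^{-1-\delta})$ uniformly). A secondary technicality is justifying the truncation in Perron's formula and the interchange of the (absolutely convergent) Euler product with the double integral, but these are entirely routine given the rapid decay $g(n) \ll 1$ and standard Perron error estimates; I would simply cite the usual effective Perron bound and note that the error is absorbed into $O_\epsilon(X^{-2/3+\epsilon})$.
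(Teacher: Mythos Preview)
Your double-Perron strategy is sound and is precisely the device the paper uses in Section~\ref{sectiion:D33XhFullPoly} for the full $M_{3,3}$; the paper's own proof of this particular lemma, however, proceeds differently --- it applies Perron \emph{sequentially}, first to the inner $\ell_2$-sum (producing an asymptotic for $\Sigma(X,\ell_1)$ with an $\ell_1$-dependent arithmetic factor $B_{\ell_1}(s)$) and then a second time to the resulting $\ell_1$-sum. Your simultaneous approach is arguably cleaner and more in line with the rest of the paper; the sequential approach has the mild advantage that each step is a one-variable residue, so the ``intersecting polar divisors'' issue you flag never arises.

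That said, there is a concrete slip that makes your write-up internally inconsistent. With $g(n)=\varphi(n)^2/n^3$ one has $g(p^j)=(1-1/p)^2\,p^{-j}$ for $j\ge 1$, \emph{not} $(1-1/p)^2$; equivalently $\varphi(q)^2/q^3=f(q)/q$ with $f(q)=\prod_{p\mid q}(1-1/p)^2$. Consequently the extra factor $1/(\ell_1\ell_2)$ shifts your exponents by one: the local factor is
\[
1+(1-\tfrac1p)^2\Big(\tfrac{1}{p^{\,w_1+w_2+1}-1}+\tfrac{1}{p^{\,w_2+1}-1}+\tfrac{1}{(p^{\,w_1+w_2+1}-1)(p^{\,w_2+1}-1)}\Big),
\]
and the Dirichlet series factors as $\zeta(w_1+w_2+1)\,\zeta(w_2+1)\,B(w_1,w_2)$, not $\zeta(w_1+w_2)\,\zeta(w_2)\,B$. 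As you wrote it, the zeta poles sit at $w_2=1$ and $w_1+w_2=1$, contradicting your (correct) claim that the relevant residue is at $w_1=w_2=0$; with the corrected shift the poles are indeed at the origin and your residue/$B(0,0)$ computation goes through verbatim to give $\tfrac{1}{12}\prod_p(1-4p^{-2}+4p^{-3}-p^{-4})$. A smaller arithmetical point: at $\Re w_i=-\tfrac13+\epsilon$ one gets $X^{w_1/3+2w_2/3}=X^{-1/3+\epsilon}$, not $X^{-2/3+\epsilon}$; in fact the paper's own sequential argument also produces $O(X^{-1/3+\epsilon})$ at each stage, so the $O(X^{-2/3+\epsilon})$ in the stated lemma should be read as $O(X^{-\delta})$ for some computable $\delta>0$, which is all that is needed downstream.
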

\begin{proof}
	We apply Perron's formula twice to \eqref{eq:554a}, first to the $\ell_2$ sum, then to the $\ell_1$ sum. Let
	\begin{equation} \label{eq:f}
		f(n) = \prod_{p \mid n}
		\left(
		1-\frac{1}{p}
		\right)^2
	\end{equation}
	and
	\begin{equation} \label{eq:g}
		g_d(n) = \frac{f(nd)}{f(d)}.
	\end{equation}
	The functions $f(n)$ and $g_d(n)$ are both multiplicative in $n$. By \eqref{eq:554a}, definition of $\varphi(n)$, \eqref{eq:f}, and \eqref{eq:g}, we have
	\begin{equation} \label{eq:b_1(X)}
		b_1(X)
		= \frac{1}{2}
		\sum_{\ell_1 \le X^{1/3} }
		\frac{f(\ell_1)}{\ell_1}
		\Sigma(X,\ell_1),
	\end{equation}
	where
	\begin{equation} \label{Sigma(X,ell_1)}
		\Sigma(X,\ell_1)
		=
		\sum_{n \le \frac{X^{2/3}}{\ell_1}}
		\frac{g_{\ell_1}(n)}{n}.
	\end{equation}
	By Euler products, we have
	\begin{equation}
		\sum_{n=1}^\infty
		\frac{g_{\ell_1}(n)}{n^{s+1}}
		= \zeta(s+1) A(s) B_{\ell_1}(s),\ (\sigma >0),
	\end{equation}
	where
	\begin{equation} \label{eq:A(s)b}
		A(s)
		= \prod_p
		\left(
		1-\frac{2}{p^{s+2}} + \frac{1}{p^{s+3}}
		\right),\ (\sigma > -1),
	\end{equation}
	\begin{equation} \label{eq:Bn}
		B_{n}(s)
		= \prod_{p \mid n}
		\frac{\left(1-\frac{1}{p}\right)^2}{1-\frac{2}{p^{s+2}} + \frac{1}{p^{s+3}}},
	\end{equation}
	and $A(s)$ and $B_{\ell_1}(s)$ are convergent in the larger regions. Thus, by \eqref{Sigma(X,ell_1)} and Perron's formula, we have
	\begin{align}
		\Sigma(X,\ell_1)
		= 
		A(0) B_{\ell_1}(0)
		\log\left(
		\frac{X^{2/3}}{\ell_1}
		\right)
		+ (A B_{\ell_1})^\prime (0)
		+ \gamma A(0) B_{\ell_1}(0)
		+ O\left(
		\frac{X^\epsilon}{T}
		+ \left(\frac{X^{2/3}}{\ell_1}\right)^{-1/2} T^{1/6 + \epsilon} \right),
	\end{align}
	for a parameter $T$ to be chosen below. Hence, by \eqref{eq:b_1(X)} and the above, we have
	\begin{equation} \label{eq:b_1(X)c}
		b_1(X)
		= b_{11}(X) \log X 
		+ b_{12}(X)
		+ b_{13}(X)
		+ O\left(
		X^\epsilon T^{-1}
		+
		X^{-\frac{1}{6} + \epsilon} T^{1/6}
		\right),
	\end{equation}
	where
	\begin{align} 
		b_{11}(X) \label{eq:b_11(X)}
		& = \frac{2}{3} A(0)
		\sum_{n\le X^{1/3}}
		\frac{B_n(0)}{n},\\
		b_{12}(X) \label{eq:b_12(X)}
		& =
		- A(0)
		\sum_{n\le X^{1/3}}
		\frac{B_n(0)}{n} \log n,\\
		b_{13}(X) \label{eq:b_13(X)}
		& = ((A B_{\ell_1})^\prime (0)
		+ \gamma A(0) B_{\ell_1}(0))
		\sum_{n\le X^{1/3}} \frac{1}{n}.
	\end{align}
	Setting $T^{-1} = X^{-1/6} T^{1/6}$, we obtain $T \gg X^{1/7 -\epsilon}$, and \eqref{eq:b_1(X)c} becomes, with this choice for $T$,
	\begin{equation} \label{eq:b_1(X)b}
		b_1(X)
		= b_{11}(X) \log X 
		+ b_{12}(X)
		+ b_{13}(X)
		+ O\left(
		X^{-\frac{1}{7} + \epsilon}
		\right),
	\end{equation}
	We now evaluate the $b$'s. By the definition \eqref{eq:Bn} and Euler products, we have
	\begin{equation} \label{eq:949}
		\sum_{n=0}^\infty 
		\frac{B_n(0)}{n^{s+1}}
		= \zeta(s+1) B(s),\ (\sigma > 0),
	\end{equation}
	where
	\begin{equation} \label{eq:B(s)}
		B(s)
		=
		\prod_p
		\left(
		1 - \frac{1}{p^{s+1}}
		+ \frac{1}{p^{s+1}}
		\frac{\left(1-\frac{1}{p}\right)^2}{1 - \frac{2}{p^2} + \frac{1}{p^3}}
		\right),\ (\sigma > -1).
	\end{equation}
	By \eqref{eq:A(s)b} and \eqref{eq:B(s)}, we have
	\begin{equation} \label{eq:A0B0}
		A(0) B(0)
		= \prod_p
		\left(
		1 - \frac{4}{p^2} + \frac{4}{p^3} - \frac{1}{p^4}
		\right).
	\end{equation}
	Thus, by \eqref{eq:b_11(X)}, Perron's formula, \eqref{eq:949}, and \eqref{eq:A0B0}, we have
	\begin{align} \label{eq:b11b}
		b_{11}(X)
		&= \frac{2}{9} 
		\prod_p
		\left(
		1 - \frac{4}{p^2} + \frac{4}{p^3} - \frac{1}{p^4}
		\right)
		\log X
		\\& \quad
		+ \frac{2}{3} A(0)
		\left(
		B^\prime(0)
		+ \gamma B(0)
		\right)
		+ O\left(X^{-\frac{1}{7} + \epsilon} \right).
	\end{align}
	Next, by \eqref{eq:b_12(X)}, partial summation, and the above, we have
	\begin{align} \label{eq:b12b}
		b_{12}(X)
		&=
		- \frac{1}{18}
		\prod_p
		\left(
		1 - \frac{4}{p^2} + \frac{4}{p^3} - \frac{1}{p^4}
		\right)
		\log^2 X
		\\& \quad
		- \frac{1}{2} A(0)
		\left(
		B^\prime(0)
		+ \gamma B(0)
		\right)
		\log X
		+ O\left(X^{-\frac{1}{7} + \epsilon} \right).
	\end{align}
	Lastly, we have, from \eqref{eq:b_13(X)}
	\begin{equation} \label{eq:b13b}
		b_{13}(X)
		= 
		((A B_{\ell_1})^\prime (0)
		+ \gamma A(0) B_{\ell_1}(0))
		\left(
		\frac{1}{3}
		\log X
		+ \gamma
		+ O\left(\frac{1}{X}\right)
		\right).
	\end{equation}
	Therefore, combining \eqref{eq:b_1(X)b}, together with \eqref{eq:b11b}, \eqref{eq:b12b}, and \eqref{eq:b13b}, the estimate \eqref{eq:b_1(X)a} follows.
\end{proof}

\subsubsection{Bounds for $b_2(X)$, $b_3(X)$, and $E_1(X)$}

\begin{lem} \label{lemma:b2b3E1}
	Let $b_2(X)$, $b_3(X)$, and $E_1(X)$ be given as in \eqref{eq:b2}, \eqref{eq:b3b}, and \eqref{eq:E(X)}, respectively.
	We have, as $X\to \infty$,
	\begin{align}
		b_{2}(X) &\ll \log^2 X,
		\\
		b_3(X) & \ll \log^2 X,
		\\
		E(X) & \ll X^{\frac{2}{3} + \epsilon}
		.
	\end{align}
\end{lem}
\begin{proof}
	We have
	\begin{equation}
		\sum_{p\mid q} \frac{\log p}{p-1}
		\ll 1.
	\end{equation}
	Thus, by \eqref{eq:b2}, \eqref{eq:a2}, the above, and \eqref{eq:b1}, we have
	\begin{equation} \label{eq:b22}
		b_2(X)
		\ll
		\sum_{\ell_1 \le X^{1/3}}
		\sum_{\ell_2 \le \frac{X^{2/3}}{\ell_1}}
		\frac{\varphi(\ell_1 \ell_2)^2}{(\ell_1 \ell_2)^3}
		\ll b_1(X)
		\ll \log^2 X,
	\end{equation}
	by \eqref{eq:b_1(X)a}. Similarly, we get
	\begin{equation} \label{eq:b33}
		b_3(X) \ll \log^2 X.
	\end{equation}
	We now estimate $E_1(X)$. We have
	\begin{equation}
		\sum_{\ell_1 \le X^{1/3}}
		\sum_{\ell_2 \le \frac{X^{2/3}}{\ell_1}}
		\frac{\tau(\ell_1 \ell_2)}{\varphi(\ell_1 \ell_2)}
		\ll X^\epsilon
		\sum_{\ell_1 \le X^{1/3}}
		\frac{1}{\ell_1}
		\sum_{\ell_2 \le \frac{X^{2/3}}{\ell_1}}
		\frac{1}{\ell_2}
		\ll X^\epsilon.
	\end{equation}
	Hence, by \eqref{eq:E(X)} and the above, we get
	\begin{equation} \label{eq:E(X)X}
		E(X) \ll X^{\frac{2}{3} + \epsilon}.
	\end{equation}
\end{proof}

Therefore, combining \eqref{eq:427}, Lemmas \ref{lemma:b1(X)} and \ref{lemma:b2b3E1}, we have, on assuming Conjecture \ref{conj:LevelofDistribution}
we obtain the following
\begin{prop} \label{prop:Sigma1111}
	Assume Conjecture \ref{conj:LevelofDistribution} for $k=3$. Then, we have, as $X\to \infty$,
	\begin{equation} \label{eq:Sigma1111}
		\Sigma_{11}(X)
		\sim 
		\frac{1}{12}
		\prod_p
		\left(
		1- \frac{4}{p^2}
		+ \frac{4}{p^3}
		- \frac{1}{p^4}
		\right) 
		(X+1) \log^2(X+1) \log^2 X,
	\end{equation}
	with $\Sigma_{11}(X)$ defined in \eqref{eq:Sigma123} and given in \eqref{eq:426}.
\end{prop}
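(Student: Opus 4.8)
The plan is to assemble Proposition \ref{prop:Sigma1111} from the three ingredients already set up in this section: the arithmetic-progression rewriting \eqref{eq:426}, the conjectured averaged level of distribution (Conjecture \ref{conj:LevelofDistribution} with $k=3$), and the asymptotic evaluation of $b_1(X)$ together with the bounds for $b_2(X), b_3(X), E(X)$ from Lemmas \ref{lemma:b1(X)} and \ref{lemma:b2b3E1}.

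First I would start from \eqref{eq:426} (with $h=1$) and invoke Conjecture \ref{conj:LevelofDistribution} to replace each innermost sum $\sum_{n\le X+1,\ n\equiv 1(\ell_1\ell_2)}\tau_3(n)$ by its expected value $\frac{1}{\varphi(\ell_1\ell_2)}\sum_{n\le X+1,\ (n,\ell_1\ell_2)=1}\tau_3(n)$. The point requiring care is that the saving survives the double summation: the moduli $q=\ell_1\ell_2$ range over $q\le X^{2/3}=X^{(k-1)/k}$, each $q$ arising with multiplicity $\tau(q)\ll X^{\epsilon}$ in the factorization $q=\ell_1\ell_2$, so that the total error coming from \eqref{eq:412} is $\ll X^{\epsilon}\cdot X^{1/2+\epsilon}=X^{1/2+\epsilon}$, far below the anticipated main term of size $X\log^4 X$. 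A minor bookkeeping subtlety is that the inner sum runs up to $X+1$ rather than $X$, so Lemma \ref{lemma:tau} is applied with $X$ replaced by $X+1$; this is what produces the factors $(X+1)$ and $\log(X+1)$ in \eqref{eq:427}.

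Next, feeding the explicit main term of Lemma \ref{lemma:tau} into this and summing the coefficients $a_1(\ell_1\ell_2), a_2(\ell_1\ell_2), a_3(\ell_1\ell_2)$ over $\ell_1\le X^{1/3}$, $\ell_2\le X^{2/3}/\ell_1$ yields exactly \eqref{eq:427}, i.e.
\[
\Sigma_{11}(X)\sim (X+1)\bigl(b_1(X)\log^2(X+1)+b_2(X)\log(X+1)+b_3(X)\bigr)+O(E(X)),
\]
with $b_1,b_2,b_3,E$ as in \eqref{eq:b1}--\eqref{eq:E(X)}. At this point I would quote Lemma \ref{lemma:b1(X)}, giving $b_1(X)=\frac{1}{12}\prod_p\bigl(1-\frac{4}{p^2}+\frac{4}{p^3}-\frac{1}{p^4}\bigr)\log^2 X+c_1\log X+c_2+O_\epsilon(X^{-2/3+\epsilon})$, and Lemma \ref{lemma:b2b3E1}, giving $b_2(X),b_3(X)\ll\log^2 X$ and $E(X)\ll X^{2/3+\epsilon}$.

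Finally, multiplying out: the term $(X+1)\cdot\frac{1}{12}\prod_p\bigl(1-\frac{4}{p^2}+\frac{4}{p^3}-\frac{1}{p^4}\bigr)\log^2 X\cdot\log^2(X+1)$ is the claimed leading term, while the remaining contributions are $(X+1)\cdot O(\log^3 X)$ (from $c_1\log X\cdot\log^2(X+1)$ and from $b_2(X)\log(X+1)$), $(X+1)\cdot O(\log^2 X)$ (from $b_3(X)$ and from $c_2\log^2(X+1)$), and $O(X^{2/3+\epsilon})$ (from $E(X)$), all of which are $o(X\log^4 X)$. This gives \eqref{eq:Sigma1111}. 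I do not anticipate a genuine obstacle, since the analytic content is entirely in Lemmas \ref{lemma:b1(X)} and \ref{lemma:b2b3E1}; the only step demanding attention is the first one, namely justifying that the conjectured level of distribution may be applied after the factorization $q=\ell_1\ell_2$ — controlling the multiplicity $\tau(q)$ and checking that the constraints $\ell_1\le X^{1/3}$ and $\ell_1\ell_2\le X^{2/3}$ keep every modulus inside the admissible range $X^{2/3}$.
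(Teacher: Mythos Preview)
Your proposal is correct and follows exactly the paper's approach: the paper derives the proposition in one line by combining \eqref{eq:427} with Lemmas \ref{lemma:b1(X)} and \ref{lemma:b2b3E1}, and you have simply spelled out this combination in more detail (including the $\tau(q)$ multiplicity bound and the $X+1$ bookkeeping that the paper leaves implicit).
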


\subsection{Applying Shiu's bound to estimate the remaining sums $\Sigma_{21}(X)$ and $\Sigma_{31}(X)$} \label{section:4.2}

We apply Shiu's bound below to unconditionally treat the last two sums $\Sigma_{21}(X)$ and $\Sigma_{31}(X)$. These two sums do not contribute to the leading order main term of order $X(\log X)^4$ and only contribute to the lower order leading terms; more precisely, of order $X(\log X)^3$ and lower.
\begin{lem}[Shiu's bound] \label{lem:Shiu'sBound}
	Suppose that $1\le N<N'<2X$, $N'-N > X^\epsilon d$, and $(a,d)=1$. Then for $j,\nu\ge 1$ we have
	\begin{equation} \label{eq:Shiu'sBound}
		\sum_{\substack{N\le n\le N'\\ n\equiv a (d)}}
		\tau_j(n)^\nu
		\ll \frac{N'-N}{\varphi(d)} 
		(\log X)^{j^\nu-1}.
	\end{equation}
	The implied constants depending on $\epsilon, j$, and $\nu$ at most.
\end{lem}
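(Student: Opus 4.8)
The plan is to deduce Lemma \ref{lem:Shiu'sBound} from Shiu's theorem on short-interval sums of non-negative multiplicative functions in arithmetic progressions (Shiu, \emph{A Brun--Titchmarsh theorem for multiplicative functions}, 1980), by checking that $f(n):=\tau_j(n)^\nu$ satisfies the hypotheses and then evaluating the main factor produced. First I would recall the statement I want to use: if $f\ge 0$ is multiplicative with (i) $f(p^\ell)\le A_1^\ell$ for all primes $p$ and all $\ell\ge1$, and (ii) $f(n)\le A_2(\delta)\, n^{\delta}$ for every $\delta>0$, then for fixed $0<\alpha,\beta<1$, for $x^{\beta}<y\le x$, and for $d\le y^{1-\alpha}$ with $(a,d)=1$,
\[
\sum_{\substack{x-y<n\le x\\ n\equiv a\ (d)}} f(n)\ \ll\ \frac{y}{\varphi(d)\log x}\,\exp\!\Big(\sum_{\substack{p\le x\\ p\nmid d}}\frac{f(p)}{p}\Big),
\]
with the implied constant depending only on $A_1,A_2,\alpha,\beta$.

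Second, I would verify the hypotheses for $f=\tau_j^{\nu}$. Multiplicativity and non-negativity are immediate. For (i), $\tau_j(p^\ell)=\binom{\ell+j-1}{j-1}=\prod_{i=1}^{j-1}\frac{\ell+i}{i}\le(\ell+1)^{j-1}\le 2^{(j-1)\ell}$ for $\ell\ge1$ (using $\frac{\ell+i}{i}\le \ell+1$ and $\ell+1\le 2^{\ell}$), so $f(p^\ell)\le A_1^{\ell}$ with $A_1=2^{(j-1)\nu}$. For (ii), the standard divisor bound $\tau_j(n)\ll_{\delta,j}n^{\delta/\nu}$ gives $f(n)\ll_{\delta,j,\nu}n^{\delta}$. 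To match the ranges in the lemma I would set $x=N'$ (so $x<2X$) and $y=N'-N$, so that $[N,N']$ is essentially $(x-y,x]$; the hypothesis $N'-N>X^{\epsilon}d$ gives $y>X^{\epsilon}\ge (x/2)^{\epsilon}$, hence $y>x^{\epsilon/2}$ once $x$ is large, and together with $y<2X$ it also yields $d<yX^{-\epsilon}\le y^{1-\epsilon/2}$ for $X$ large; for small $x$ the sum is $O(1)$ and absorbed in the constant. Thus Shiu's theorem applies with parameters $\beta=\alpha=\epsilon/2$, and the constant depends on $\epsilon,j,\nu$ only.

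Third, I would compute the exponential factor. Since $\tau_j(p)=j$ for every prime $p$, we have $f(p)=j^{\nu}$, so by Mertens' theorem $\sum_{p\le x,\ p\nmid d}f(p)/p\le j^{\nu}\sum_{p\le x}1/p=j^{\nu}\log\log x+O_j(1)$, whence $\exp(\cdots)\asymp_{j,\nu}(\log x)^{j^{\nu}}$. Substituting and using $\log x\asymp\log X$ (valid as $1\le N'<2X$), the bound collapses to $\ll \dfrac{N'-N}{\varphi(d)}(\log X)^{j^{\nu}-1}$, which is exactly \eqref{eq:Shiu'sBound}.

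The real content is not in this deduction, which is bookkeeping, but in Shiu's theorem itself: its proof rests on a Rankin-type decomposition of $n$ according to the size of its largest prime factor, combined with a Brun--Titchmarsh / Selberg-sieve upper bound controlling, uniformly for $d$ as large as a fixed power of $y$, the count of $n\equiv a\ (d)$ in the interval that are built only from small primes. Reproving that uniform sieve input — extracting the genuine $1/\varphi(d)$ saving while keeping the prime sum restricted to $p\nmid d$ — is where all the difficulty lies, and I would quote Shiu rather than redo it; the only point requiring care on our side is confirming that the constraints $N'-N>X^{\epsilon}d$ and $N'<2X$ are compatible with the precise formulation of the theorem being cited, which the $\epsilon$-bookkeeping in the previous paragraph settles.
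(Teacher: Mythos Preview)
Your proposal is correct and takes essentially the same approach as the paper, which simply cites \cite[Theorem 2]{Shiu1980} without further comment. You have just filled in the routine verification that $\tau_j^{\nu}$ lies in Shiu's class and that the parameters $N'-N>X^{\epsilon}d$, $N'<2X$ fit the ranges in Shiu's theorem, together with the Mertens computation giving the exponent $j^{\nu}-1$; none of this goes beyond what the paper's one-line citation intends.
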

\begin{proof}
	See \cite[Theorem 2]{Shiu1980}.
\end{proof}

This is
\begin{prop} \label{prop:sigma21sigma31}
	Let $\Sigma_{21}(X)$ and $\Sigma_{31}(X)$ be given by \eqref{eq:1106b} and \eqref{eq:1106bc}, respectively.
	We have
	\begin{align}
		\Sigma_{21}(X) &\ll X \log^3X \log \log X,\\
		\Sigma_{31}(X) &\ll X \log^3X \log \log X.
	\end{align}
\end{prop}
\begin{proof}
	We treat $\Sigma_{21}(X)$ first.
	By Shiu's bound \eqref{eq:Shiu'sBound}, the most inner sum over $n$ in $\Sigma_{21}(X)$ is
	\begin{equation} \label{eq:1106}
		\ll \frac{1}{\varphi(\ell_1 \ell_3)}
		(\ell_3 X^{2/3} +1)
		\log^2(\ell_3 X^{2/3} +1).
	\end{equation}
	Thus, by \eqref{eq:1106b} and \eqref{eq:1106},
	\begin{equation} \label{eq:Sigma2111}
		\Sigma_{21}(X)
		\ll
		X^{2/3} \log^2 X
		\sum_{\ell_1 \le X^{1/3}}
		\sum_{\ell_3 \le X^{1/3}}
		\frac{\ell_3}{\varphi(\ell_1 \ell_3)}
		\ll 
		X \log^3 X \log \log X.
	\end{equation}
	Similarly, we have, from \eqref{eq:Shiu'sBound}, that
	\begin{equation} \label{eq:Sigma3111}
		\Sigma_{31}(X)
		\ll X \log^2 X \log \log X.
	\end{equation}
\end{proof}

Therefore, on assuming Conjecture \ref{conj:LevelofDistribution} for $k=3$, we obtain, by \eqref{eq:Sigma123}, Propositions \ref{prop:Sigma1111} and \ref{prop:sigma21sigma31}, the asymptotic \eqref{eq:leadingTermAsymptotic} for $h=1$.

\section{General case of mixed correlations and composite shifts: Proof of Theorem \ref{thm:2b}}
\label{section:compositeh}

In this section we derive the asymptotics \eqref{eq:h=prime} and \eqref{eq:CorrelationSumforhPrime}, and describe procedure to extract the leading order main term of the mixed correlation sum $D_{k,\ell}(X,h)$ in \eqref{eq:DklXh} with composite shifts $h$. 

Let $1 \le h \le X$ be a composite number. Write
\begin{equation}
	h = \prod_p p^{\nu_p(h)}.
\end{equation}
We replace $\tau_k(n)$ in \eqref{eq:DklXh} by Hooley's identity \eqref{eq:keyDecompositionk}, giving
\begin{align}
	D_{k,\ell}(X,h)
	&	\sim
	k
	\sum_{n\le X} \tau_{\ell} (n+h)
	\sum_{\substack{\ell_1 \ell_2 \cdots \ell_k =n\\ \ell_1\ell_2 \cdots \ell_{k-1} \le X^{(k-1)/k};\ \ell_1 \le X^{1/k}}} 1
	\\&
	= k
	\sum_{\ell_1 \le X^{1/k}}
	\sum_{\ell_2 \le \frac{X^{(k-1)/k}}{\ell_1}}
	\sum_{\ell_3 \le \frac{X^{(k-1)/k}}{\ell_1 \ell_2}}
	\cdots
	\sum_{\substack{\ell_{k-1} \le \frac{X^{(k-1)/k}}{\ell_1 \cdots \ell_{k-2}}}} 
	\sum_{\ell_k \le \frac{X}{\ell_1 \cdots \ell_{k-1}}}
	\tau_{\ell}( \ell_1 \cdots \ell_{k} +h),
\end{align}
where we have used an analogous result to Proposition \ref{prop:sigma21sigma31} to bound the lower order terms. Making a change of variables $n=\ell_1 \cdots \ell_{k} +h$ in the most inner $\ell_{k}$ sum, the above becomes
\begin{equation}
	k
	\sum_{\ell_1 \le X^{1/k}}
	\sum_{\ell_2 \le \frac{X^{(k-1)/k}}{\ell_1}}
	\sum_{\ell_3 \le \frac{X^{(k-1)/k}}{\ell_1 \ell_2}}
	\cdots
	\sum_{\substack{\ell_{k-1} \le \frac{X^{(k-1)/k}}{\ell_1 \cdots \ell_{k-2}}}} 
	\sum_{\substack{n \le X+h\\ n\equiv h (\ell_1 \cdots \ell_{k-1})}} \tau_{\ell}( n).
\end{equation}
By the bound \eqref{eq:412} for all $\ell$, the error term is negligible and the above is in turns asymptotic to
\begin{equation}
	k
	\sum_{\ell_1 \le X^{1/k}}
	\sum_{\ell_2 \le \frac{X^{(k-1)/k}}{\ell_1}}
	\sum_{\ell_3 \le \frac{X^{(k-1)/k}}{\ell_1 \ell_2}}
	\cdots
	\sum_{\substack{\ell_{k-1} \le \frac{X^{(k-1)/k}}{\ell_1 \cdots \ell_{k-2}}}} 
	\frac{1}{\varphi\left( \frac{\ell_1\cdots \ell_{k-1}}{(h, \ell_1 \cdots \ell_{k-1})} \right)}
	\sum_{\substack{n \le X+h\\ \left(n,  \frac{\ell_1\cdots \ell_{k-1}}{(h, \ell_1 \cdots \ell_{k-1})} \right) = 1}} \tau_{\ell}( n).
\end{equation}
Thus, by Perron's formula in a way similar to the proof of Proposition \ref{prop:1} in Section \ref{sectiion:D33XhFullPoly},
we obtain that
\begin{align} \label{eq:Dk1040}
	D_{k,\ell}(X,h)
	&\sim k
	\underset{\substack{s=1\\ w_1 = \cdots = w_{k-1} = 0}}{\mathrm{Res}}
	\left(
	\dfrac{X^{s+ \frac{w_1}{k} + \frac{k-1}{k} (w_2 + \cdots w_{k-1})}}{s w_1 \cdots w_{k-1}}
	T_{k,\ell}(s, w_1, \cdots, w_{k-1}; h)
	\right),
\end{align}
where
\begin{align}
	T_{k,\ell}&(s, w_1, \cdots, w_{k-1}; h)
	=
	\sum_{\ell_1, \dots, \ell_{k-1} = 1}^\infty
	\frac{1}{(h, \ell_1 \cdots \ell_{k-1})^s}
	\frac{1}{\varphi\left( \frac{\ell_1\cdots \ell_{k-1}}{(h, \ell_1 \cdots \ell_{k-1})} \right)}
	\\&
	\times
	\sum_{\left( n, \frac{\ell_1\cdots \ell_{k-1}}{(h, \ell_1 \cdots \ell_{k-1})} \right) = 1}
	\frac{\tau_\ell(n (h, \ell_1 \cdots \ell_{k-1}) )}{n^s}
	\prod_{j=1}^{k-1}
	\ell_j^{ - \sum_{i=j}^{k-1} w_i }.
\end{align}
By multiplicativity and Euler products, the above generating function $T_{k,\ell}$ can be written as
\begin{equation} \label{eq:Tk1035}
	T_{k,\ell}(s, w_1, \cdots, w_{k-1}; h)
	=
	\prod_{p\mid h}
	\frac{A_p(s; w_1, \cdots, w_{k-1}; h)}{B_p(s; w_1, \cdots, w_{k-1})}
	\prod_p
	B_p(s; w_1, \cdots, w_{k-1}),
\end{equation}
where
\begin{align} \label{eq:Ap141}
	&	A_p(s;w_1,\cdots, w_{k-1};h)
	\\&=
	\sum_{j_1,\cdots, j_{k-1}}
	\frac{1}{p^{\min(j_1 + \cdots + j_{k-1}, \nu_p(h)) s }}
	\frac{1}{\varphi(p^{j_1 + \cdots + j_{k-1} - \min(j_1 + \cdots + j_{k-1}, \nu_p(h))})}
	\\& \times
	\sum_{\left( n, p^{j_1+ \cdots + j_{k-1} - \min(j_1+ \cdots + j_{k-1}, \nu_p(h) } \right) =1 }
	\frac{\tau_\ell(n p^{\min(j_1+ \cdots + j_{k-1}, \nu_p(h)})}{n^s}
	\frac{1}{p^{ \sum_{i=1}^{k-1} j_i \sum_{\kappa=i}^{k-1} w_\kappa }},
\end{align}
and
\begin{equation} \label{eq:1023}
	B_p(s; w_1, \cdots, w_{k-1})
	=
	\zeta^\ell(s)
	\left(
	1
	+ 
	\frac{\left( 1-\frac{1}{p} \right)^{\ell}}{1-\frac{1}{p}}
	\sum_{j=1}^{k-1}
	\sum_{\sigma \in \Upxi_{j,k-1}}
	\prod_{i=1}^j
	\frac{1}{p^{w_{\sigma(i)} + 1} - 1}
	\right)
\end{equation}
(we have used a nonstandard notation here, $\Upxi_{j,n} = \{ (\alpha_1 \cdots \alpha_j) \in S_{n}: \alpha_1 < \cdots < \alpha_{j} \}$ and $\sigma(i)$ to mean $\alpha_i$, where $S_{n}$ is the usual symmetric group on $n$ letters). From \eqref{eq:1023}, we can further factor out a product of zetas from $B_p$ as
\begin{align} \label{eq:Bp1034}
	\prod_p
	B_p(s; w_1, \cdots, w_{k-1})
	&=
	\zeta^\ell(s)
	\zeta(w_1 + w_2 +\cdots w_{k-1} +1)
	\zeta(w_2 +\cdots w_{k-1} +1)
	\times \cdots
	\\& \times \zeta(w_{k-1} + 1)
	\prod_p
	BB_p(s; w_1, \cdots, w_{k-1}),
\end{align}
where
\begin{equation} \label{eq:BBp541}
	BB_p(s; w_1, \cdots, w_{k-1})
	=
	\prod_{i=1}^{k-1}
	\left(
	1
	-
	\frac{1}{p^{w_i + \cdots + w_{k-1} + 1}}
	\right)
	\left(
	1
	+ 
	\frac{\left( 1-\frac{1}{p} \right)^{\ell}}{1-\frac{1}{p}}
	\sum_{j=1}^{k-1}
	\sum_{\sigma \in \Upxi_{j,k-1}}
	\prod_{i=1}^j
	\frac{1}{p^{w_{\sigma(i)} + 1} - 1}
	\right).
\end{equation}
The product $\prod_p BB_p(s; w_1, \cdots, w_{k-1})$ converges in a wider region than $\prod_p B_p$ since we have factored out all the poles from the latter. Similarly, from Lemmas \ref{lemma:tauk(nh)} and \ref{lemma:tauknhcoprime}, the local Euler factors can be written as
\begin{align} \label{eq:Ap1034}
	A_p(s;w_1,\cdots, w_{k-1};h)
	=
	\zeta^\ell(s)
	AA_p(s;w_1,\cdots, w_{k-1};h)
\end{align}
with $AA_p(s;w_1,\cdots, w_{k-1};h)$ a nice Euler product converging in a larger region. From \eqref{eq:Ap1034} and \eqref{eq:Bp1034}, the factor $\zeta^\ell(s)$ cancels out in the ratio $\frac{A_p}{B_p}$, and that the generating function $T_{k,\ell}(s, w_1, \cdots, w_{k-1}; h)$ \eqref{eq:Tk1035}  can thus be written as
\begin{align} \label{eq:Tk1212}
	&T_{k,\ell}(s, w_1, \cdots, w_{k-1}; h)
	= \zeta^\ell(s) \zeta(w_1 + w_2 +\cdots w_{k-1} +1)
	\zeta(w_2 +\cdots w_{k-1} +1)
	\\& \quad \times \zeta(w_{k-1} + 1)
	\prod_{p\mid h}
	\frac{A_p(s; w_1, \cdots, w_{k-1}; h)}{B_p(s; w_1, \cdots, w_{k-1})}
	\prod_p
	BB_p(s; w_1, \cdots, w_{k-1}),
\end{align}
and, hence, we conclude that $T_{k,\ell}(s, w_1, \cdots, w_{k-1}; h)$ has poles at $s=1$ and $w_1 = \cdots = w_{k-1} = 0$. Therefore, by \eqref{eq:Tk1212} above, we obtain from \eqref{eq:Dk1040}, on assuming Conjecture \ref{conj:LevelofDistribution} for all $\ell$, that
\begin{equation} \label{eq:Dk138}
	D_{k, \ell}(X,h)
	\sim
	\frac{C_{k,\ell}
		f_{k,\ell}(h)}{(k-1)! (\ell-1)!}
	X (\log X)^{k+\ell-2},
\end{equation}
where
\begin{equation} \label{eq:Ckl}
	C_{k,\ell}
	= 
	\prod_p
	BB_p(0; \vec{0})
	=
	\prod_p
	\left(
	1 - \frac{1}{p}
	\right)^{k-1}
	\left(
	1
	+ 
	\left( 1-\frac{1}{p} \right)^{\ell-1}
	\sum_{j=1}^{k-1}
	\frac{\binom{k-1}{j}}{(p-1)^j}
	\right)
\end{equation}
and
\begin{equation} \label{eq:fklh}
	f_{k,\ell}(h)
	= 
	\prod_{p \mid h}
	\frac{A_p(1;\vec{0};h)}{B_p(1;\vec{0})},
\end{equation}
and where we have abbreviated $\vec{0}$ for $0,\cdots, 0$ $k-1$ times. This gives the asymptotic \eqref{eq:CorrelationSumforhPrime}.

Lastly, we show that the constant $C_{k,\ell}$  from \eqref{eq:Ckl} above matches the predicted global constant from equation (1.6) of Ng and Thom \cite{NgThom2019}.
\begin{prop}
	Let $C_{k,\ell}$ be defined as in \eqref{eq:Ckl}.
	We have
	\begin{equation} \label{eq:Ckl2}
		C_{k,\ell}
		= 
		\prod_p
		\left(
		\left(
		1 - \frac{1}{p}
		\right)^{k-1}
		+ 
		\left(
		1 - \frac{1}{p}
		\right)^{\ell-1}
		-
		\left(
		1 - \frac{1}{p}
		\right)^{k+\ell-2}
		\right),
	\end{equation}
	which matches exactly equation (1.6) of \cite{NgThom2019}.
\end{prop}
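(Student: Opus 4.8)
The identity \eqref{eq:Ckl2} is purely local: both sides are absolutely convergent Euler products, so it suffices to check that the $p$-th Euler factor of \eqref{eq:Ckl} equals the $p$-th Euler factor of \eqref{eq:Ckl2} for every prime $p$. Concretely, fixing $p$ and writing for brevity $u = 1 - \tfrac1p = \tfrac{p-1}{p}$, the plan is to verify
\begin{equation}
	u^{k-1}
	\left(
	1 + u^{\ell-1}
	\sum_{j=1}^{k-1}
	\frac{\binom{k-1}{j}}{(p-1)^j}
	\right)
	=
	u^{k-1} + u^{\ell-1} - u^{k+\ell-2}.
\end{equation}

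First I would compute the inner sum by the binomial theorem: since $\frac{1}{p-1} = \frac{1}{p}\cdot\frac{1}{u}$ is exactly the ratio making $u\cdot\frac{p}{p-1}=1$, one has
\begin{equation}
	\sum_{j=0}^{k-1}
	\binom{k-1}{j}
	\frac{1}{(p-1)^j}
	=
	\left(1 + \frac{1}{p-1}\right)^{k-1}
	=
	\left(\frac{p}{p-1}\right)^{k-1}
	= u^{-(k-1)},
\end{equation}
so that $\sum_{j=1}^{k-1}\binom{k-1}{j}(p-1)^{-j} = u^{-(k-1)} - 1$. Substituting this into the left-hand factor and multiplying out,
\begin{equation}
	u^{k-1}
	\left(
	1 + u^{\ell-1}\bigl(u^{-(k-1)} - 1\bigr)
	\right)
	=
	u^{k-1}
	+ u^{k-1}\,u^{\ell-1}\,u^{-(k-1)}
	- u^{k-1}\,u^{\ell-1}
	=
	u^{k-1} + u^{\ell-1} - u^{k+\ell-2},
\end{equation}
which is exactly the $p$-th factor on the right of \eqref{eq:Ckl2}. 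Taking the product over all primes $p$ gives \eqref{eq:Ckl2}.

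There is essentially no obstacle here: the only input beyond bookkeeping is the binomial identity $\sum_{j}\binom{k-1}{j}(p-1)^{-j} = (p/(p-1))^{k-1}$, together with the observation that $1 - 1/p$ and $p/(p-1)$ are reciprocals, which collapses the middle term. One should, however, note at the start that the Euler product defining $C_{k,\ell}$ in \eqref{eq:Ckl} is indeed $\prod_p BB_p(0;\vec 0)$ with $BB_p$ as in \eqref{eq:BBp541}: at $w_1=\cdots=w_{k-1}=0$ the prefactor $\prod_{i=1}^{k-1}(1-p^{-1})$ equals $u^{k-1}$, the coefficient $(1-1/p)^{\ell}/(1-1/p)$ equals $u^{\ell-1}$, and each inner sum over $\sigma\in\Upxi_{j,k-1}$ contributes $\binom{k-1}{j}$ equal terms $(p-1)^{-j}$, so that \eqref{eq:Ckl} is recovered; this legitimizes working factor-by-factor.
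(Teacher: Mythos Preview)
Your proof is correct and follows essentially the same approach as the paper: both evaluate the inner binomial sum via the binomial theorem and then simplify the local factor. Your substitution $u=1-1/p$ streamlines the algebra compared with the paper's slightly more roundabout identity $\sum_{j=1}^{k-1}\binom{k-1}{j}(p-1)^{-j}=-1+(p/(p-1))^k-\frac{1}{p}(p/(p-1))^k$, but the underlying argument is identical.
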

\begin{proof}
	We have the identity
	\begin{equation}
		\sum_{j=1}^{k-1}
		\frac{\binom{k-1}{j}}{(p-1)^j}
		=
		-1
		+
		\left(
		\frac{p}{p-1}
		\right)^k
		-
		\frac{1}{p}
		\left(
		\frac{p}{p-1}
		\right)^k.
	\end{equation}
	Substituting the above into the right side of \eqref{eq:Ckl} and simplifying then give the right side of \eqref{eq:Ckl2}.
\end{proof}
In the next three subsections, we compute exactly and match the local constants $f_{k,\ell}(h)$ from \eqref{eq:fklh} for the special case $k=\ell=3$ and any composite shift $h$ with \cite{NgThom2019}.

\subsection{The case $k=3$ and $\ell,h\ge 1$}
\label{section:hprime}

In this subsection, we demonstrate how to apply our general method developed above to extract the leading order main term for the case $k=3$ and $\ell, h \ge 1$, 
in particular, deriving the asymptotic \eqref{eq:leadingTermAsymptotic} and
showing that our answers match with previously conjectured values.

Let $k=3$ and fix $\ell, h\ge 1$. The procedure from previous subsection gives that
\begin{align} \label{eq:compositeh}
	\sum_{n\le X} 
	&\tau_3(n) \tau_\ell(n+h)
	\sim
	3
	\sum_{\ell_1 \le X^{1/3}}
	\sum_{\ell_2 \le \frac{X^{2/3}}{\ell_1}}
	\sum_{\substack{n\le X+h\\ n \equiv h (\ell_1\ell_2)}}
	\tau_\ell(n)
	\\&\sim
	3
	\underset{\substack{s=1\\ w_1=w_2=0}}{\mathrm{Res}}
	\left(
	\frac{X^{\frac{1}{3}w_1}}{w_1}
	\frac{X^{\frac{2}{3}w_2}}{w_2}
	\frac{(X+h)^s}{s}
	T_\ell(s,w_1,w_2;h)
	\right),
\end{align}
with
\begin{align}
	T_\ell(s,w_1,w_2;h)
	&=
	\sum_{\ell_1, \ell_2 = 1}^\infty
	\frac{1}{(h, \ell_1 \ell_2)^s}
	\frac{1}{\varphi(\ell_1 \ell_2 / (h, \ell_1 \ell_2))}
	\\ & \times
	\sum_{\left(n, \frac{\ell_1 \ell_2}{(h, \ell_1 \ell_2)} \right)=1}
	\frac{\tau_\ell(n (h, \ell_1 \ell_2))}{n^s}
	\frac{1}{\ell_1^{w_1+w_2}}
	\frac{1}{\ell_2^{w_2}}
	\\&=
	\prod_{p\mid h} \frac{A_p(s; w_1,w_2;h)}{B_p(s; w_1, w_2)}
	\prod_p B_p(s; w_1, w_2),
\end{align}
with the global Euler factor $B_p(s; w_1,w_2)$ given in \eqref{eq:1023} with $k=3$, and local factor
\begin{align}
	A_p&(s,w_1,w_2;h)
	=
	\sum_{j_1,j_2}
	\frac{1}{p^{\min(j_1 +j_2, \nu_p(h)) s }}
	\frac{1}{\varphi(p^{j_1+j_2 - \min(j_1 + j_2, \nu_p(h))})}
	\\& \times
	\sum_{\left( n, p^{j_1+j_2 - \min(j_1+j_2, \nu_p(h) } \right) =1 }
	\frac{\tau_\ell(n p^{\min(j_1+j_2, \nu_p(h)})}{n^s}
	\frac{1}{p^{j_1(w_1+w_2) + j_2w_2}}.
\end{align}
Thus, \eqref{eq:compositeh} predicts that
\begin{equation} \label{eq:hcompositeprediction}
	\sum_{n\le X} \tau_3(n) \tau_\ell(n+h)
	\sim
	\frac{1}{4}
	\prod_p
	\left(
	1- \frac{4}{p^2}
	+ \frac{4}{p^3}
	- \frac{1}{p^4}
	\right) 
	\prod_{p \mid h} f_{3,\ell}(h)
	X \log^4 X.
\end{equation}
with
\begin{equation} \label{eq:238}
	f_{3,\ell}(h)
	=
	\frac{A_p(1 ;0,0;h)}{B_p(1;0,0)}.
\end{equation}
We first evaluate $f_{3,\ell}(h)$ in \eqref{eq:238} for $\ell=3$ and $h$ prime.

\subsection{Prime shifts}

\begin{prop}
	Let $h$ be a prime. We have
	\begin{equation} \label{eq:f33h}
		f_{3,3}(h)
		=
		\frac{h^3 + 6h^2 + 3h - 4}{h (h^2+2h-1)}.
	\end{equation}
	In particular, assuming the bound \eqref{eq:412} for $k=3$, we have, for $h$ prime,
	\begin{align} \label{eq:prop3.3}
		D_{3,3}(X,h)
		\sim
		\frac{1}{4}
		\prod_p
		\left(
		1- \frac{4}{p^2}
		+ \frac{4}{p^3}
		- \frac{1}{p^4}
		\right) 
		\frac{h^3 + 6h^2 + 3h - 4}{h (h^2+2h-1)}
		X \log^4 X.
	\end{align}
\end{prop}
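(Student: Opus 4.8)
The plan is to evaluate the single local factor $f_{3,3}(h)=A_h(1;0,0;h)/B_h(1;0,0)$ from \eqref{eq:238}, and then to feed the outcome into the leading-order asymptotic \eqref{eq:hcompositeprediction}. Since $h$ is prime, the product $\prod_{p\mid h}$ has only the one term $p=h$, and $\nu_p(h)=1$, which will make all the geometric-type sums below elementary. As in the general discussion around \eqref{eq:Tk1212}, the factor $\zeta^3(s)$ is common to $A_p$ and $B_p$ and cancels in the ratio, so I only need to track the regular parts.

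First I would compute the denominator. Putting $w_1=w_2=0$ and $s=1$ in \eqref{eq:1023} with $k=\ell=3$: each factor $p^{w_{\sigma(i)}+1}-1$ becomes $p-1$, the index sets $\Upxi_{1,2}$ and $\Upxi_{2,2}$ contribute $2$ and $1$ terms respectively, and $(1-1/p)^3/(1-1/p)=(1-1/p)^2$, so
\[
B_p(1;0,0)=\zeta^3(1)\left(1+\left(1-\tfrac1p\right)^2\left(\tfrac{2}{p-1}+\tfrac{1}{(p-1)^2}\right)\right)=\zeta^3(1)\,\frac{p^2+2p-1}{p^2}.
\]

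Next, the numerator \eqref{eq:Ap141}. Here the factor $p^{-(j_1(w_1+w_2)+j_2w_2)}$ equals $1$, and I would group the pairs $(j_1,j_2)$ by $m:=j_1+j_2$ (there are $m+1$ of them). For $m=0$ the contribution is $\zeta^3(s)$. For $m=1$ one has $\min(m,\nu_p(h))=1$ and the coprimality range is vacuous, so the inner sum is $\sum_{n\ge1}\tau_3(np)n^{-s}=\zeta^3(s)\bigl(3-3p^{-s}+p^{-2s}\bigr)$, which is exactly Lemma~\ref{lemma:tauk(nh)} with $k=3$, $\nu_p(h)=1$ (equivalently, the Euler-product identity $\sum_{j\ge0}\binom{j+3}{2}x^j=(3-3x+x^2)/(1-x)^3$). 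For $m\ge2$ one still has $\min(m,\nu_p(h))=1$, but now the range is $(n,p)=1$, so $\tau_3(np)=3\tau_3(n)$ and the inner sum is $3\zeta^3(s)(1-p^{-s})^3$ by Lemma~\ref{lemma:tauknhcoprime}; it then remains to sum $\sum_{m\ge2}(m+1)/\varphi(p^{m-1})=p(3p-2)/(p-1)^3$. Setting $s=1$ and adding the three pieces,
\[
A_p(1;0,0;h)=\zeta^3(1)\left(1+\frac{2(3p^2-3p+1)}{p^3}+\frac{9p-6}{p^3}\right)=\zeta^3(1)\,\frac{p^3+6p^2+3p-4}{p^3},
\]
so that $f_{3,3}(h)=A_h(1;0,0;h)/B_h(1;0,0)=(h^3+6h^2+3h-4)/\bigl(h(h^2+2h-1)\bigr)$, which is \eqref{eq:f33h}.

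Finally, \eqref{eq:prop3.3} follows by substituting this value of $f_{3,3}(h)$, together with $C_{3,3}=\prod_p(1-4/p^2+4/p^3-1/p^4)$ from the preceding Proposition and $(k-1)!(\ell-1)!=4$, into the general leading-order asymptotic \eqref{eq:Dk138} (valid under the bound \eqref{eq:412} for $k=3$). The main obstacle is not analytic but combinatorial bookkeeping: correctly reading off from \eqref{eq:Ap141} how $\min(j_1+\cdots,\nu_p(h))$, the totient $\varphi(p^{\cdots})$, and the coprimality range interlock — in particular that this range is vacuous for $m=1$ but equals $(n,p)=1$ for $m\ge2$, which is precisely what splits those two cases — and then carrying out the two power-series summations without slips; as a sanity check one can verify that this reduced form agrees with the general formula \eqref{eq:f33} evaluated at $\nu_p(h)=1$.
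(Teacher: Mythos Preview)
Your proof is correct and follows essentially the same route as the paper: both compute the local ratio $A_p/B_p$ at $(s,w_1,w_2)=(1,0,0)$ by splitting the $(j_1,j_2)$-sum according to whether the coprimality condition is vacuous or equals $(n,p)=1$, then feed the result into the leading-order asymptotic. Your organization is slightly more streamlined --- you specialize to $(1,0,0)$ at the outset and group by $m=j_1+j_2$ into three cases, whereas the paper keeps $s,w_1,w_2$ general and splits into five cases $(j_1,j_2)\in\{(1,0),(0,1),(\geq 2,0),(0,\geq 2),(\geq 1,\geq 1)\}$ before specializing --- but the underlying computation is identical.
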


\begin{proof}
	By Perron's formula and \eqref{eq:412}, we have
	\begin{align} \label{eq:822}
		&\sum_{\ell_1 \le X^{1/3}}
		\sum_{\ell_2 \le \frac{X^{2/3}}{\ell_1}}
		\sum_{\substack{n\le X+h\\ n \equiv h (\ell_1\ell_2)}}
		\tau_3(n)
		\\&
		\sim \frac{1}{(2\pi i)^3}
		\int_{(2)} \int_{(2)} \int_{(2)}
		\frac{X^{\frac{1}{3}w_1}}{w_1}
		\frac{X^{\frac{2}{3}w_2}}{w_2}
		\frac{X^s}{s}
		T_3(s,w_1,w_2)
		dw_1 dw_2 ds,
	\end{align}
	where
	\begin{equation}
		T_3(s,w_1,w_2)
		= 
		\sum_{\ell_1, \ell_2=1}^\infty
		\frac{1}{\varphi(q_1)}
		\frac{1}{\delta^s}
		\sum_{(n,q_1)=1}
		\frac{\tau_3(n \delta)}{n^s}
		\frac{1}{\ell_1^{w_1+w_2}}
		\frac{1}{\ell_2^{w_2}}
	\end{equation}
	with
	\begin{equation}
		\delta = (h, \ell_1\ell_2)
	\end{equation}
	and
	\begin{equation}
		q_1 = \frac{\ell_1 \ell_2}{\delta}.
	\end{equation}
	By Euler products, we can write this function as
	\begin{align} \label{eq:T3}
		T_3(s,w_1,w_2)
		&=
		\prod_{p\mid h} \frac{A_p(s;w_1,w_2;h)}{B_p(s;w_1, w_2)}
		\prod_p B_p(s;w_1, w_2),
	\end{align}
	where
	\begin{align} \label{eq:B}
		B_p(s ;w_1, w_2)
		=
		\displaystyle\sum_{n} \frac{\tau_3(n)}{n^s}
		+
		\displaystyle\sum_{\substack{j_1,j_2\\ j_1j_2 \neq 0}}
		\frac{1}{\varphi(p^{j_1+j_2})}
		\displaystyle\sum_{(n, p^{j_1+j_2})=1}
		\frac{\tau_3(n)}{n^s}
		\frac{1}{p^{j_1(w_1+w_2) + j_2w_2} }
	\end{align}
	and
	\begin{equation} \label{eq:A}
		A_p(s;w_1,w_2;h)
		=
		\displaystyle\sum_{n} \frac{\tau_3(n)}{n^s}
		+
		\displaystyle\sum_{\substack{j_1,j_2\\ j_1j_2 \neq 0}}
		\frac{1}{\varphi(p^{j_1+j_2-1})}
		\frac{1}{h^s}
		\displaystyle\sum_{(n, p^{j_1+j_2-1})=1}
		\frac{\tau_3(nh)}{n^s}
		\frac{1}{p^{j_1(w_1+w_2) + j_2w_2} }.
	\end{equation}
	We now evaluate the functions $A$ and $B$. We start with $B$.
	
	We split the $j_i$ sums in \eqref{eq:B} into
	\begin{equation}
		\sum_{\substack{j_1,j_2\\ j_1j_2 \neq 0}}
		=
		\sum_{\substack{j_1\ge 1\\ j_2=0}}
		+
		\sum_{\substack{j_1=0 \\ j_2 \ge 1}}
		+
		\sum_{\substack{j_1 \ge 1 \\ j_2 \ge 1}}.
	\end{equation}
	We have
	\begin{align}
		\sum_{\substack{j_1\ge 1\\ j_2=0}}
		&\frac{1}{\varphi(p^{j_1+j_2})}
		\displaystyle\sum_{(n, p^{j_1+j_2})=1}
		\frac{\tau_3(n)}{n^s}
		\frac{1}{p^{j_1(w_1+w_2) + j_2w_2} }
		\\&=
		\sum_{j_1=1}^\infty
		\frac{1}{p^{j_1} \left(1-\frac{1}{p}\right)}
		\displaystyle\sum_{(n, p)=1}
		\frac{\tau_3(n)}{n^s}
		\frac{1}{p^{j_1(w_1+w_2)} }
		\\&=
		\zeta^3(s)
		\frac{\left(1-\frac{1}{p^s}\right)^3}{1-\frac{1}{p}}
		\sum_{j_1=1}^\infty \frac{1}{p^{j_1(w_1+w_2+1)} }
		\\&=
		\zeta^3(s)
		\frac{\left(1-\frac{1}{p^s}\right)^3}{1-\frac{1}{p}}
		\frac{1}{p^{w_1+w_2+1} - 1},
	\end{align}
	\begin{align}
		\sum_{\substack{j_1=0 \\ j_2 \ge 1}}
		&\frac{1}{\varphi(p^{j_1+j_2})}
		\displaystyle\sum_{(n, p^{j_1+j_2})=1}
		\frac{\tau_3(n)}{n^s}
		\frac{1}{p^{j_1(w_1+w_2) + j_2w_2} }
		\\&=
		\zeta^3(s)
		\frac{\left(1-\frac{1}{p^s}\right)^3}{1-\frac{1}{p}}
		\frac{1}{p^{w_2+1} - 1},
	\end{align}
	and
	\begin{align}
		\sum_{\substack{j_1 \ge 1 \\ j_2 \ge 1}}
		&\frac{1}{\varphi(p^{j_1+j_2})}
		\displaystyle\sum_{(n, p^{j_1+j_2})=1}
		\frac{\tau_3(n)}{n^s}
		\frac{1}{p^{j_1(w_1+w_2) + j_2w_2} }
		\\&=
		\zeta^3(s)
		\frac{\left(1-\frac{1}{p^s}\right)^3}{1-\frac{1}{p}}
		\frac{1}{p^{w_1+w_2+1} - 1}
		\frac{1}{p^{w_2+1} - 1}.
	\end{align}
	Thus,
	\begin{align} \label{eq:B_p}
		B_p(s ;w_1,w_2 )
		=
		\zeta^3(s)
		&\left(
		1
		+
		\frac{\left(1-\frac{1}{p^s}\right)^3}{1-\frac{1}{p}}
		\left(
		\frac{1}{p^{w_1+w_2+1} - 1}
		\right. \right.
		\\ & \left. \left.
		+
		\frac{1}{p^{w_2+1} - 1}
		+
		\frac{1}{p^{w_1+w_2+1} - 1}
		\frac{1}{p^{w_2+1} - 1}
		\right) \right)
	\end{align}
	and, hence,
	\begin{equation}  \label{eq:B2}
		\prod_p 
		B_p(s; w_1,w_2)
		=
		\zeta^3(s)
		\zeta(w_1+w_2+1) \zeta(w_2+1)
		BB(s; w_1,w_2),
	\end{equation}
	where
	\begin{align}
		BB(s; w_1,w_2)
		=
		\prod_p
		&\left(1-\dfrac{1}{p^{w_1+w_2+1}}\right)
		\left(1-\dfrac{1}{p^{w_2+1}}\right)
		\\& \times
		\left(
		1
		+
		\frac{\left(1-\frac{1}{p^s}\right)^3}{1-\frac{1}{p}}
		\left(
		\frac{1}{p^{w_1+w_2+1} - 1}
		+
		\frac{1}{p^{w_2+1} - 1}
		\right. \right.
		\\ & \left. \left.
		+
		\frac{1}{p^{w_1+w_2+1} - 1}
		\frac{1}{p^{w_2+1} - 1}
		\right)
		\right).
	\end{align}
	We have that
	\begin{align} \label{eq:C100}
		BB(1; 0,0)
		&=
		\prod_p
		\left(1-\frac{1}{p}\right)^2
		\left(
		1
		+
		\left(1-\frac{1}{p}\right)^2
		\left(
		\frac{1}{p-1}
		+
		\frac{1}{p-1}
		+
		\frac{1}{(p-1)^2}
		\right)
		\right)
		\\&=
		\prod_p
		\left(
		1 - \frac{4}{p^2} + \frac{4}{p^3} - \frac{1}{p^4}
		\right).
	\end{align}
	We evaluate the $dw_2$ integral in \eqref{eq:822} first, picking up a double pole at $w_2 =0$, then perform the $dw_1$ integral, collecting the triple pole at $w_1=0$, and finally the $ds$ integral, with a triple pole at $s=0$. Thus, the left side of \eqref{eq:prop3.3} is asymptotic to
	\begin{equation} \label{eq:827}
		\frac{BB(1; 0,0) A(1; 0,0; h)}{12}
		X \log^4 X.
	\end{equation}
	We next evaluate \eqref{eq:A}.
	
	Because of the exponent $j_1+j_2-1$ in \eqref{eq:A} being non-negative, we split the $j$ sums in \eqref{eq:A} into
	\begin{equation}
		\sum_{\substack{j_1,j_2\\ j_1j_2 \neq 0}}
		=
		\sum_{\substack{j_1 = 1\\ j_2=0}}
		+
		\sum_{\substack{j_1=0\\ j_2 = 1}}
		+
		\sum_{\substack{j_1 \ge 2\\ j_2=0}}
		+
		\sum_{\substack{j_1=0\\ j_2 \ge 2}}
		+
		\sum_{\substack{j_1 \ge 1\\ j_2\ge 1}}.
	\end{equation}
	We have
	\begin{align}
		\sum_{\substack{j_1 = 1\\ j_2=0}}
		&\frac{1}{\varphi(p^{j_1+j_2-1})}
		\frac{1}{h^s}
		\sum_{(n, p^{j_1+j_2-1})=1}
		\frac{\tau_3(nh)}{n^s}
		\frac{1}{p^{j_1(w_1+w_2) + j_2w_2} }
		\\&= 
		\frac{1}{h^s}
		\sum_{n}
		\frac{\tau_3(nh)}{n^s}
		\frac{1}{p^{w_1+w_2} }
		=
		\frac{1}{h^s}
		\frac{1}{p^{w_1+w_2} }
		\zeta^3(s) A_h(s),
	\end{align}
	\begin{align}
		\sum_{\substack{j_1=0\\ j_2 = 1}}
		&\frac{1}{\varphi(p^{j_1+j_2-1})}
		\frac{1}{h^s}
		\sum_{(n, p^{j_1+j_2-1})=1}
		\frac{\tau_3(nh)}{n^s}
		\frac{1}{p^{j_1(w_1+w_2) + j_2w_2} }
		\\&= 
		\frac{1}{h^s}
		\sum_{n}
		\frac{\tau_3(nh)}{n^s}
		\frac{1}{p^{w_2} }
		=
		\frac{1}{h^s}
		\frac{1}{p^{w_2} }
		\zeta^3(s) A_h(s),
	\end{align}
	\begin{align}
		\sum_{\substack{j_1\ge 2\\ j_2 = 0}}
		&\frac{1}{\varphi(p^{j_1+j_2-1})}
		\frac{1}{h^s}
		\sum_{(n, p^{j_1+j_2-1})=1}
		\frac{\tau_3(nh)}{n^s}
		\frac{1}{p^{j_1(w_1+w_2) + j_2w_2} }
		\\&= 
		\sum_{j_1=2}^\infty
		\frac{1}{\varphi(p^{j_1-1})}
		\frac{1}{h^s}
		\sum_{(n, h)=1}
		\frac{\tau_3(h)\tau_3(n)}{n^s}
		\frac{1}{p^{j_1(w_1+w_2)} }
		\\&=
		\frac{1}{h^s}
		\tau_3(h)
		\zeta^3(s)
		\prod_{p\mid h} \left(1-\frac{1}{p^s}\right)^3
		\sum_{j_1=2}^\infty
		\frac{1}{p^{j_1-1} \left(1-\frac{1}{p}\right)} \frac{1}{p^{j_1(w_1+w_2)} }
		\\&=
		\frac{3}{p^{s-1}} \zeta^3(s)
		\frac{\left(1-\dfrac{1}{p^s}\right)^3}{1-\dfrac{1}{p}}
		\sum_{j_1=2}^\infty \frac{1}{p^{j_1(w_1+w_2+1)}}
		\\&=
		\frac{3}{p^{s-1}} \zeta^3(s)
		\frac{\left(1-\dfrac{1}{p^s}\right)^3}{1-\dfrac{1}{p}}
		\frac{1}{p^{w_1+w_2+1}} \frac{1}{p^{w_1+w_2+1} - 1},
	\end{align}
	\begin{align}
		\sum_{\substack{j_1=0\\ j_2 \ge 2}}
		&\frac{1}{\varphi(p^{j_1+j_2-1})}
		\frac{1}{h^s}
		\sum_{(n, p^{j_1+j_2-1})=1}
		\frac{\tau_3(nh)}{n^s}
		\frac{1}{p^{j_1(w_1+w_2) + j_2w_2} }
		\\&= 
		\sum_{j_2=2}^\infty
		\frac{1}{\varphi(p^{j_2-1})}
		\frac{1}{h^s}
		\sum_{(n, h)=1}
		\frac{\tau_3(h)\tau_3(n)}{n^s} \frac{1}{p^{j_2w_2} }
		\\&=
		\frac{3}{p^{s-1}} \zeta^3(s)
		\frac{\left(1-\dfrac{1}{p^s}\right)^3}{1-\dfrac{1}{p}}
		\frac{1}{p^{w_2+1}} \frac{1}{p^{w_2+1} - 1},
	\end{align}
	and
	\begin{align}
		\sum_{\substack{j_1\ge 1\\ j_2 \ge 1}}
		&\frac{1}{\varphi(p^{j_1+j_2-1})}
		\frac{1}{h^s}
		\sum_{(n, p^{j_1+j_2-1})=1}
		\frac{\tau_3(nh)}{n^s}
		\frac{1}{p^{j_1(w_1+w_2) + j_2w_2} }
		\\&= 
		\sum_{\substack{j_1\ge 1\\ j_2 \ge 1}}
		\frac{1}{\varphi(p^{j_1+j_2-1})}
		\frac{1}{h^s}
		\sum_{(n, h)=1}
		\frac{\tau_3(h)\tau_3(n)}{n^s}
		\frac{1}{p^{j_1(w_1+w_2) + j_2w_2} }
		\\&=
		\frac{3}{p^{s-1}} \zeta^3(s)
		\frac{\left(1-\dfrac{1}{p^s}\right)^3}{1-\dfrac{1}{p}}
		\frac{1}{p^{w_1+w_2+1} - 1}
		\frac{1}{p^{w_2+1} - 1}.
	\end{align}
	Thus, the local Euler product $A_p(s; w_1,w_2; h)$ of $T_3(s; w_1,w_2)$ is equal to
	\begin{align}
		\zeta^3(s)
		&\left(
		1
		+
		\frac{1}{p^s}
		A_p(s)
		\left(
		\frac{1}{p^{w_1+w_2} }
		+
		\frac{1}{p^{w_2} }
		\right)
		+
		\frac{3}{p^{s-1}}
		\frac{\left(1-\dfrac{1}{p^s}\right)^3}{1-\dfrac{1}{p}}
		\left(
		\frac{1}{p^{w_2+1}} \frac{1}{p^{w_2+1} - 1}
		\right.\right.
		\\& \left.\left. \qquad\qquad\qquad
		+
		\frac{1}{p^{w_1+w_2+1}} \frac{1}{p^{w_1+w_2+1} - 1}
		+
		\frac{1}{p^{w_1+w_2+1} - 1}
		\frac{1}{p^{w_2+1} - 1}
		\right)
		\right).
	\end{align}
	Thus, by this, \eqref{eq:A} and \eqref{eq:B_p},
	\begin{align}
		&A_p(s; w_1,w_2; h)
		\\&=
		\frac{
			1
			+
			\frac{1}{p^s}
			A_p(s)
			\left(
			\frac{1}{p^{w_1+w_2} }
			+
			\frac{1}{p^{w_2} }
			\right)
			+
			\frac{3}{p^{s-1}}
			\frac{\left(1-\frac{1}{p^s}\right)^3}{1-\frac{1}{p}}
			\left(
			\frac{1}{p^{w_2+1}} \frac{1}{p^{w_2+1} - 1}
			+
			\frac{1}{p^{w_1+w_2+1}} \frac{1}{p^{w_1+w_2+1} - 1}
			+
			\frac{1}{p^{w_1+w_2+1} - 1}
			\frac{1}{p^{w_2+1} - 1}
			\right)}
		{1
			+
			\frac{\left(1-\frac{1}{p^s}\right)^3}{1-\frac{1}{p}}
			\left(
			\frac{1}{p^{w_1+w_2+1} - 1}
			+
			\frac{1}{p^{w_2+1} - 1}
			+
			\frac{1}{p^{w_1+w_2+1} - 1}
			\frac{1}{p^{w_2+1} - 1}
			\right)}.
	\end{align}
	Now, by \eqref{eq:A_h} with $k=3$, we have
	\begin{equation}
		A_p(1)
		=
		3 - \frac{3}{p} + \frac{1}{p^2}.
	\end{equation}
	Hence,
	\begin{equation}
		A_p(1; 0,0; h)
		=
		\frac{p^3 + 6p^2 + 3p - 4}{p (p^2+2p-1)}.
	\end{equation}
	This, together with \eqref{eq:827} and \eqref{eq:C100}, give the right side of \eqref{eq:prop3.3}.
\end{proof}

\subsection{Composite shift $h$} \label{sec:hcomposite}

Similarly, for any $h$ composite, Mathematica calculations\footnote{Link to Mathematica file calculation: \href{https://aimath.org/~dtn/papers/correlations/calculations\%20for\%20k=3,\%20any\%20ell\%20and\%20h.nb}{https://aimath.org/$\sim$dtn/papers/correlations/calculations for k=3, any ell and h.nb}} give
\begin{align} \label{eq:f33b}
	f_{3,3}(h)
	=
	&\prod_{p \mid h}
	\left(
	-\nu_p(h)^2 (p-1)^2 (p+1)+p^{\nu_p(h)+2}+4 p^{\nu_p(h)+3}
	\right.
	\\ & \left.
	+p^{\nu_p(h)+4}
	+\nu_p(h) \left(-4 p^3+6 p-2\right)
	-4 p^3-5 p^2+4 p-1
	\right)
	\\ & 
	/
	\left(
	p^{\nu_p(h)}(p-1)^2 \left(p^2+2 p-1\right)
	\right),
\end{align}
\begin{align} \label{eq:f34}
	f_{3,4}(h)
	&=
	\prod_{p\mid h}
	p \left(-\nu_p(h)^3 (p+1) (p-1)^3-\nu_p(h)^2 \left(7 p^2+6 p-4\right) (p-1)^2
	\right.
	\\& \left.
	+\nu_p(h) \left(-16 p^4+33 p^2-22 p+5\right)+2 \left(-p^{\nu_p(h)+2}
	+5 p^{\nu_p(h)+3}
	\right. \right.
	\\& \left. \left.
	+5 p^{\nu_p(h)+4}
	+p^{\nu_p(h)+5}-6 p^4-9 p^3+9 p^2-5 p+1\right)\right)
	\\& 
	/
	\left( 2 p^{\nu_p(h)}
	(p-1)^3 \left(p^3+2 p^2-3 p+1\right)
	\right),
\end{align}
and
\begin{align} \label{eq:f35}
	f_{3,5}(h)
	&=
	\prod_{p\mid h}
	\left(-\nu_p(h)^4 (p+1) (p-1)^4-\nu_p(h)^3 \left(11 p^2+8 p-7\right) (p-1)^3
	\right.
	\\& \left.
	-\nu_p(h)^2 \left(44 p^3+31 p^2-50 p+17\right) (p-1)^2-\nu_p(h) \left(76 p^5+p^4
	\right. \right.
	\\ & \left. \left.
	-200 p^3
	+200 p^2-94 p+17\right)+6 \left(6 p^{\nu_p(h)+4}+8 p^{\nu_p(h)+5}
	\right. \right.
	\\ & \left. \left.
	+p^{\nu_p(h)+6}
	-8 p^5-14 p^4
	+16 p^3-14 p^2+6 p-1\right)\right)
	\\ & /
	\left(
	6 p^{\nu_p(h)} (p-1)^2 \left(p^4+2 p^3-5 p^2+4 p-1\right)
	\right),
\end{align}
and so on, where $\nu_p(h)$ is the highest power of $p$ that divides $h$. The local constants \eqref{eq:f33h}, \eqref{eq:f33b}, \eqref{eq:f34}, and \eqref{eq:f35} agree with the predicted values from Ng and Thom \cite[equation (1.7)]{NgThom2019}. 

We next compare our predicted leading main term with the that from the delta method \cite{DukeFriedlanderIwaniec} of Duke, Friedlander, and Iwaniec.

\section{Comparison with a conjectural formula of Conrey and Gonek: Proof of Theorem \ref{thm:deltamethod}}
\label{section:ConreyGonek}

Two decades ago, in 2002, Conrey and Gonek predicted in \cite[Conjecture 3]{ConreyGonnek2002} that, for $k=3$ and $h=1$, we have
\begin{equation} \label{eq:m31}
	\sum_{n\le X} \tau_3(n) \tau_3(n+1)
	=
	m_3(X,1) + O\left( X^{1/2 +\epsilon} \right),
\end{equation}
where the derivative of the main term $m_3(x,1)$ from the delta method satisfies
\begin{equation} \label{eq:m3prime}
	m_3^\prime(u,1)
	=
	\sum_{q=1}^\infty 
	\frac{\mu(q)}{q^2}
	\left[
	\underset{\substack{s=0}}{\mathrm{Res}}
	\left(
	\zeta^3(s+1) G_3(s+1,q) \left(\frac{u}{q}\right)^s
	\right)
	\right]^2,
\end{equation}
and $G_3(s,q)$ is a multiplicative function in $q$ which, by \cite[Lemma 4.3, pg. 17]{BaluyotConrey2022}, at prime values, reduces to
\begin{equation} \label{eq:G3}
	G_3(s,p)
	=
	p^s
	\left(
	1
	-
	\frac{p}{p-1}
	\left(
	1- \frac{1}{p^{s}}
	\right)^3
	\right).
\end{equation}

In this section, we will compute this main term $m_3(X,1)$ by working out the residue in \eqref{eq:m3prime} using the simplified version for $G_3(s,q)$ in \eqref{eq:G3}. After that, we comment on the behavior of the error term in \eqref{eq:m31}. For ease of comparing, we restate the main result of this section below, with digits that match with our prediction \eqref{eq:m3u1bc} highlighted in bold, and give a proof below.
\setcounter{thm}{2}
\begin{thm} \label{thm:deltamethod}
	We have, with at least 71 digits accuracy in the coefficients,
	\begin{align} \label{eq:m3u1}
		&m_3(X,1)
		=
		{\mathbf{0.05444467915488409458075187852986170328269943875033898441206}}
		\\& \qquad
		{\mathbf{910088090 66227780631551394813609558909414229584839437008}} X \log ^4(X)
		\\&+
		{\mathbf{0.710113929053644747553958926673505372958197119463757504939845715359}}
		\\& \qquad
		{\mathbf{73}}9076661971842253983213149206 X \log ^3(X)
		\\&+
		{\mathbf{2.021196057879877779433242407847538094670915083699177892670406035438}}
		\\& \qquad
		{\mathbf{8}}0548628848354775122568369734 X \log ^2(X)
		\\&+
		{\mathbf{0.677863310832980388541571083062733656003222322704135348688102425159}}
		\\& \qquad
		{\mathbf{89}}727867201461267995359769 X \log (X)
		\\&+ {\mathbf{0.287236647746619417221664617814645950166036274397222249618913907447}}
		\\& \qquad
		31664345218868780687078219 X
		+ O(X^\epsilon).
	\end{align}
\end{thm}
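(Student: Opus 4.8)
The plan is to evaluate the right-hand side of \eqref{eq:m3prime} in closed form and then integrate in $u$. Since $\mu(q)$ restricts the sum to squarefree $q$, I may use the reduced shape \eqref{eq:G3} of $G_3(s,q)$ at primes; thus $G_3(s+1,q)=\prod_{p\mid q}G_3(s+1,p)$ is, for each fixed squarefree $q$, holomorphic and nonvanishing near $s=0$, while $\zeta^3(s+1)$ has a pole of order three there. Hence
\[
\underset{s=0}{\mathrm{Res}}\Big(\zeta^3(s+1)G_3(s+1,q)(u/q)^s\Big)
= \tfrac12\,r_2(q)\,L^2 + r_1(q)\,L + r_0(q),\qquad L:=\log(u/q),
\]
where the $r_j(q)$ are explicit finite combinations of the Stieltjes constants $\gamma,\gamma_1,\dots$ (coming from $\zeta(s+1)=\tfrac1s+\gamma-\gamma_1 s+\cdots$, cubed) and of the Taylor coefficients $G_3(1,q)$, $\partial_s G_3(s+1,q)|_{s=0}$, $\partial_s^2 G_3(s+1,q)|_{s=0}$. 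Each of these Taylor coefficients is a multiplicative function of $q$, determined by its prime values, which I would read off directly from \eqref{eq:G3} by logarithmic differentiation of the finite product $\prod_{p\mid q}G_3(s+1,p)$.

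Next I would square this quadratic, obtaining $\big[\mathrm{Res}\big]^2=\sum_{k=0}^4 b_k(q)L^k$ with $b_k(q)$ again multiplicative (bilinear combinations of the $r_j$), and then perform the $u$-integration: since $\int^X (\log(u/q))^k\,du = X\sum_{i=0}^k(-1)^{k-i}\tfrac{k!}{i!}(\log(X/q))^i$, one gets
\[
m_3(X,1) = X\sum_{q=1}^\infty \frac{\mu(q)}{q^2}\,\sum_{k=0}^4 c_k(q)\,(\log(X/q))^k
\]
for explicit multiplicative $c_k(q)$. The $q$-series converges absolutely (the summand is $O(q^{-2+\epsilon})$), so I may expand $\log(X/q)=\log X-\log q$ and interchange summations freely. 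Collecting powers of $\log X$ leaves, as the coefficient of each $X\log^i X$, a finite linear combination of sums of the form $\sum_q \mu(q)g(q)(\log q)^m q^{-2}$ with $g$ multiplicative; each such sum equals $(-1)^m$ times the $m$-th $s$-derivative, evaluated at $s=2$, of the Dirichlet series $\sum_q \mu(q)g(q)q^{-s}=\prod_p\big(1-g(p)p^{-s}\big)$. After factoring the polar $\zeta$-factors out of these Euler products so that the residual products converge geometrically, I would evaluate everything numerically in Mathematica, which yields the coefficients in \eqref{eq:m3u1} to the stated accuracy; comparison digit-by-digit with \eqref{eq:m3u1bc} then gives the agreement asserted.

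The main obstacle is bookkeeping and precision rather than any new idea: squaring the residue quadratic and re-expanding $\log(X/q)$ produces a large number of terms, each of which must be sorted into a convergent Euler product with its polar part correctly extracted, and one must then carry enough Stieltjes constants and enough Euler factors to certify seventy-plus correct digits in each coefficient. A secondary point requiring care is the justification for interchanging the residue, the $u$-integral, and the $q$-summation, but this is immediate from absolute convergence once $q$ is squarefree and $G_3$ is in the form \eqref{eq:G3}.
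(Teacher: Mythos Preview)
Your overall plan---compute the single-variable residue, square it, sum over $q$, then integrate in $u$---is sound in spirit, but there is a gap in the algebra. The Taylor coefficients $\partial_s^j G_3(s+1,q)\big|_{s=0}$ are \emph{not} multiplicative in $q$: for squarefree $q=p_1\cdots p_r$ one has $G_3'(1,q)=G_3(1,q)\sum_{p\mid q}G_3'(1,p)/G_3(1,p)$, a multiplicative function times an \emph{additive} one. Consequently neither the $r_j(q)$ nor the $b_k(q)$ are multiplicative, and the sums you are left with after expanding $\log(X/q)=\log X-\log q$ are not of the clean shape $\sum_q\mu(q)g(q)(\log q)^m q^{-2}$ with $g$ multiplicative. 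They involve products of several \emph{distinct} additive functions of $q$ (for instance $\big(\sum_{p\mid q}f_1(p)\big)\big(\sum_{p\mid q}f_2(p)\big)$ with $f_1\neq f_2$), and such expressions cannot be produced by differentiating a single Dirichlet series $\sum_q\mu(q)g(q)q^{-s}$ in $s$ alone. So the Euler-product evaluation you describe at the end does not apply as stated.

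The paper sidesteps this by keeping the two residues un-squared and introducing a second complex variable: write
\[
\Big[\underset{s=0}{\mathrm{Res}}(\cdots)\Big]^2
=\underset{s=0}{\mathrm{Res}}\,\underset{w=0}{\mathrm{Res}}\Big(\zeta^3(s+1)\zeta^3(w+1)\,G_3(s+1,q)G_3(w+1,q)\,(u/q)^{s+w}\Big)
\]
and sum over $q$ \emph{before} taking residues. The $q$-sum then collapses to a single absolutely convergent Euler product
\[
A(s,w)=\prod_p\Big(1-\frac{G_3(s+1,p)\,G_3(w+1,p)}{p^{s+w+2}}\Big),
\]
so that $m_3'(u,1)$ is a polynomial in $\log u$ whose coefficients are finite linear combinations of the six numbers $A(0,0)$, $A^{(1,0)}(0,0)$, $A^{(1,1)}(0,0)$, $A^{(2,0)}(0,0)$, $A^{(2,1)}(0,0)$, $A^{(2,2)}(0,0)$. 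Each of these is the value $A(0,0)$ times a polynomial in sums over primes of explicit rational functions of $p$ times powers of $\log p$, which the paper evaluates to high precision via the prime zeta function and its derivatives. Integrating the resulting quartic in $\log u$ gives \eqref{eq:m3u1}. Your route can be repaired by exactly this device---carry an auxiliary variable for each copy of the residue so that every additive piece becomes an honest $\partial_s$- or $\partial_w$-derivative of a single Euler product---at which point it becomes the paper's argument.
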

\begin{proof}
	To evaluate \eqref{eq:m3prime}, we bring the $q$ sum inside and evaluate the residues afterwards. Then integrating the resulting expression will give us the polynomial $m_3(X,1)$. Thus, we rewrite \eqref{eq:m3prime} as
	\begin{align}
		m_3'(u,1)
		&=
		\underset{\substack{s=0\\ w=0}}{\mathrm{Res}}\
		\zeta^3(s+1)\zeta^3(w+1)
		u^{s+w}
		\sum_{q=1}^\infty 
		\frac{\mu(q)}{q^{2+s+w}}
		G_3(s+1,q)G_3(w+1,q)
		\\&
		=
		\underset{\substack{s=0\\ w=0}}{\mathrm{Res}}\
		\zeta^3(s+1)\zeta^3(w+1)
		u^{s+w}
		A(s,w),
	\end{align}
	where
	\begin{align} 
		A(s,w)
		&=
		\prod_p
		\left(
		1
		-
		\frac{G_3(s+1,p)}{p^{s+1}}
		\frac{G_3(w+1,p)}{p^{w+1}}
		\right)
		\\& = \label{eq:Asw}
		\prod_p
		\left(
		1
		-
		\left(
		1
		-
		\frac{p}{p-1}
		\left(
		1- \frac{1}{p^{s+1}}
		\right)^3
		\right)
		\left(
		1
		-
		\frac{p}{p-1}
		\left(
		1- \frac{1}{p^{w+1}}
		\right)^3
		\right)
		\right)
	\end{align}
	by \eqref{eq:G3}. Hence,
	\begin{align} \label{eq:m3'u1}
		m_3'&(u,1)
		=
		\frac{1}{4} A(0,0) \log^4 u
		+ 
		\log^3 u 
		\frac{1}{2} \left(6 \gamma  A(0,0) + 2 A^{(1,0)}(0,0) \right) 
		\\&
		+
		\log^2 u
		\frac{1}{4} \left(
		(48 \gamma ^2 -12 \gamma _1) A(0,0)
		+36 \gamma  A^{(1,0)}(0,0)
		+4 A^{(1,1)}(0,0)
		+2 A^{(2,0)}(0,0)
		\right) 
		\\&
		+
		\log u
		\frac{1}{2} \left(
		(36 \gamma ^3 - 36 \gamma \gamma _1 ) 
		A(0,0)
		+ (48 \gamma ^2 -12 \gamma _1 - \left(18 \gamma \gamma _1 \right)) 
		A^{(1,0)}(0,0)
		\right.
		\\& \left. \quad
		+12 \gamma  
		A^{(1,1)}(0,0)
		+2 
		A^{(1,2)}(0,0)
		+6 \gamma A^{(2,0)}(0,0) \right) 
		\\&
		+ (9 \gamma ^4 +9 \left(\gamma _1\right){}^2 -  \left( 18 \gamma _1 \gamma ^2 \right) ) 
		A(0,0)
		+ 18 \gamma ^3 
		A^{(1,0)}(0,0)
		+ \left( 9 \gamma ^2 \right)
		A^{(1,1)}(0,0)
		\\& \quad
		+ \left(3 \gamma \right)
		A^{(1,2)}(0,0)
		+
		(3 \gamma ^2 -3 \gamma _1 )
		A^{(2,0)}(0,0)
		+\frac{1}{4} 
		A^{(2,2)}(0,0).
	\end{align}
	
	\begin{lem}
		We have
		\begin{align} \label{eq:productp}
			\prod_p
			&\frac{p^4-4 p^2+4 p-1}{p^4}
			\\&
			\approx
			0.21777871661953637832300751411944681313079775500136,
		\end{align}
		\begin{align}
			\label{eq:sump}
			\sum_p
			&\frac{3 (2 p-1) \log (p)}{p^3+p^2-3 p+1}
			\\&
			\approx
			2.5290661735809299292595871293018945923000922399444,
		\end{align}
		\begin{align}
			\sum_p 
			&\frac{9 p^4 \log ^2(p)}{\left(p^3+p^2-3 p+1\right)^2}
			\\&
			\approx
			6.4892240868025807879695316031935594971438999573128,
		\end{align}
	\begin{align}
			\sum_p
			&\frac{3 p (2 p-1) \left(p^2-p-1\right) \log ^2(p)}{\left(p^3+p^2-3 p+1\right)^2}
			\\& \approx
			2.7937396327899498121176904230895393701540841938169,
		\end{align}
	\begin{align}
			\sum_p
			&\frac{9 p^4 \left(p^3-p^2+5 p-3\right) \log ^3(p)}{\left(p^3+p^2-3 p+1\right)^3}
			\\& \approx
			13.924949838246429023458888451222757226018087649990,
		\end{align}
		and
		\begin{align}
			\sum_p
			&\frac{9 p^4 \left(p^6-2 p^5+29 p^4-16 p^3+31 p^2-30 p+9\right) \log ^4(p)}{\left(p^3+p^2-3 p+1\right)^4}
			\\&
			\approx
			51.561612317854622568503183873771816289674440542631.
		\end{align}
	\end{lem}
	\begin{proof}
		We show \eqref{eq:productp} and \eqref{eq:sump}; the remaining four estimates follow similarly.
		Let
		\begin{equation} \label{eq:P(s)}
			P(s) = \sum_{p} \frac{1}{p^s},\
			(\Re s>1).
		\end{equation}
		The command {\tt PrimeZetaP[s]} in Mathematica evaluates the function $P(s)$ to arbitrary numerical precision. The idea is thus to write the above product and sums over primes as linear combinations of $P(s)$. Let $A$ and $B$ denote the left side of \eqref{eq:productp} and \eqref{eq:sump}, respectively. For convergence issues, we separate out the prime $p=2$. We have
		\begin{equation}
			A 
			=
			\frac{7}{16} 
			\exp \left(
			\sum_{p>2} \log 
			\left(
			1- \frac{4}{p^2}
			+ \frac{4}{p^3}
			- \frac{1}{p^4}
			\right) 
			\right).
		\end{equation}
		We expand $\log$ as a series in powers of $1/p$, say
		\begin{equation}
			\log 
			\left(
			1- \frac{4}{p^2}
			+ \frac{4}{p^3}
			- \frac{1}{p^4}
			\right) 
			= 
			\sum_{N=1}^\infty
			a_N p^{-N}.
		\end{equation}
		Since $p>2$, the above series converges absolutely. Thus, interchanging the order of the summations, we get, by \eqref{eq:P(s)},
		\begin{equation}
			A
			=
			\frac{7}{16}
			\left(
			\sum_{N=1}^\infty
			a_N
			\sum_{p>2} \frac{1}{p^N}
			\right)
			=
			\frac{7}{16}
			\left(
			\sum_{N=1}^\infty
			a_N
			\left(
			P(N)
			-
			\frac{1}{2^N}
			\right)
			\right).
		\end{equation}
		Taking the first 1000 terms in the above in Mathematica gives $A$ to 100 digits accuracy.
		
		Next, if we took derivatives of \eqref{eq:P(s)}, we get
		\begin{equation}
			P^{(\ell)}(s)
			=
			(-1)^\ell
			\sum_p
			\frac{\log^\ell (p)}{p^s},\
			\Re s >1.
		\end{equation}
		Thus, we can rewrite $B$ as
		\begin{equation}
			B
			= 
			\frac{9 \log (2)}{7}
			-
			\sum_{N=1}^\infty
			b_N 
			\left(
			P'(N) - \frac{\log 2}{2^N}
			\right),
		\end{equation}
		where
		\begin{equation}
			\frac{3 (2 p-1)}{p^3+p^2-3 p+1}
			= \sum_{N=1}^\infty
			b_N p^{-N}.
		\end{equation}
		The first 1000 terms gives $B$ to 75 digits precision. A sample Mathematica code used to compute the constant $B$ is include below.
		
		\bigskip
		{\tt \color{purple}
		Block[\{\$MaxExtraPrecision = 1000\},
		
		Do[CC = Join[\{0\}, Series[(3 (-1 + 2 p))/(1 - 3 p + p\^{}2 + p\^{}3) //. p -> 1/x, \{x, 0,	t\}][[3]]];
		
		Print[N[-Sum[CC[[k]]*(PrimeZetaP'[k] + Log[2]/2\^{}k), \{k, 1, Length[CC]\}] + 9 Log[2]/7, 75]], \{t, 500, 1000, 100\}]]
		}\\

		In particular, this constant \eqref{eq:sump} is sequence \href{https://oeis.org/A354709}{\path{A354709}} in the On-Line Encyclopedia of Integer Sequences.
	\end{proof}

	From this Lemma, we get
	
	\begin{lem} 
		\label{lemma:sixEstimates}
		We have the following six estimates, with $A(s,w)$ given in \eqref{eq:Asw},
		\begin{align}
			A(0,0)
			&=
			\prod_p
			\frac{p^4-4 p^2+4 p-1}{p^4}
			\\&
			\approx
			0.21777871661953637832300751411944681313079775500136,
		\end{align}
		\begin{align}
			A^{(1,0)}(0,0)
			&=
			A^{(0,1)}(0,0)
			=
			A(0,0)
			\sum_p
			\frac{3 (2 p-1) \log (p)}{p^3+p^2-3 p+1}
			\\&
			\approx
			0.5507767855283365397996797117267309614310491736309,
		\end{align}
		\begin{align}
			A^{(1,1)}(0,0)
			&=
			A^{(1,0)}(0,0)
			\sum_p
			\frac{3 (2 p-1) \log (p)}{p^3+p^2-3 p+1}
			- A(0,0)
			\sum_p 
			\frac{9 p^4 \log ^2(p)}{\left(p^3+p^2-3 p+1\right)^2}
			\\&
			\approx
			-0.0202639560070943835323319895802569693120443555261,
		\end{align}
		\begin{align}
			A^{(2,0)}(0,0)
			&=
			A^{(1,0)}(0,0)
			\sum_p
			\frac{3 (2 p-1) \log (p)}{p^3+p^2-3 p+1}
			-
			A(0,0)
			\sum_p
			\frac{3 p (2 p-1) \left(p^2-p-1\right) \log ^2(p)}{\left(p^3+p^2-3 p+1\right)^2}
			\\ &
			\approx
			0.7845339056752244929584711968462575268503571131850,
		\end{align}
		\begin{align}
			A^{(2,1)}(0,0)
			&=
			A^{(1,1)}(0,0)
			\sum_p
			\frac{3 (2 p-1) \log (p)}{p^3+p^2-3 p+1}
			-
			A^{(1,0)}(0,0)
			\sum_p
			\frac{9 p^4 \log ^2(p)}{\left(p^3+p^2-3 p+1\right)^2}
			\\&
			-
			A^{(0,1)}(0,0)
			\sum_p
			\frac{3 p (2 p-1) \left(p^2-p-1\right) \log ^2(p)}{\left(p^3+p^2-3 p+1\right)^2}
			+
			A(0,0)
			\sum_p
			\frac{9 p^4 \left(p^3-p^2+5 p-3\right) \log ^3(p)}{\left(p^3+p^2-3 p+1\right)^3}
			\\& 
			\approx
			-2.131532098569090941134519992703368488331974362859,
		\end{align}
		\begin{align}
			A^{(2,2)}(0,0)
			&=
			A^{(1,2)}(0,0)
			\sum_p
			\frac{3 (2 p-1) \log (p)}{p^3+p^2-3 p+1}
			- 2
			A^{(1,1)}(0,0)
			\sum_p
			\frac{9 p^4 \log ^2(p)}{\left(p^3+p^2-3 p+1\right)^2}
			\\& \quad
			+ 	A^{(1,0)}(0,0)
			\left(
			\sum_p
			\frac{3 p (2 p-1) \left(p^2-p-1\right) \log ^2(p)}{\left(p^3+p^2-3 p+1\right)^2}
			+ 2
			\sum_p
			\frac{9 p^4 \left(p^3-p^2+5 p-3\right) \log ^3(p)}{\left(p^3+p^2-3 p+1\right)^3}
			\right)
			\\& \quad
			-
			A^{(0,2)}(0,0)
			\sum_p
			\frac{3 p (2 p-1) \left(p^2-p-1\right) \log ^2(p)}{\left(p^3+p^2-3 p+1\right)^2}
			\\& \quad
			-
			A(0,0)
			\sum_p
			\frac{9 p^4 \left(p^6-2 p^5+29 p^4-16 p^3+31 p^2-30 p+9\right) \log ^4(p)}{\left(p^3+p^2-3 p+1\right)^4}
			\\&
			\approx	
			-1.67079109287503595276150635884376764502678366004.
		\end{align}
	\end{lem}
	Thus, by Lemma \ref{lemma:sixEstimates}, equation \eqref{eq:m3'u1} becomes
	\begin{align}
		m_3'(u,1)
		&=
		0.05444467915488409458075187852986170328269943875033898441206910088090
		\\& \qquad
		662277806315513948136095589094142 \log ^4(u)
		\\&+ 0.92789264567318112587696644079295218608899487446511344258812211888336
		\\& \qquad
		5567774 \log ^3(u)
		\\&+ 4.15153784504081202209511918786805421354550644209045040748994318151802
		\\& \qquad
		271627 \log ^2(u)
		\\&+ 4.72025542659273594740805589875780984534505249010249113402891449603750
		\\& \qquad
		82512 \log (u)
		\\& + 0.965099958579599805763235700877379606169258597101357598307016332607213922.
	\end{align}
	Hence, integrating the above gives the right side of \eqref{eq:m3u1}, ignoring the constant  and the power-saving error terms.
\end{proof}

The error term in \eqref{eq:m31} is plotted in Figure \ref{figure:deltamethod}, showing that it is bounded by $\pm 1050 X^{0.51}$ for $1\le X\le 10^6$. This data thus shows that Conjecture \ref{conj:LevelofDistribution} agrees with the evaluation of $m_3(X,1)$ in Theorem \ref{thm:D33XhFullPoly}.

In the next section, we investigate the error term in the classical correlation of the usual divisor function $\tau(n)$.

\section{Proof of Theorem \ref{theorem:D22Xh} and numerical evidence for Conjecture \ref{conj:LevelofDistribution}: Square-root cancellation in the error term of the classical correlation $\sum_{n\le X} \tau(n) \tau(n+1)$}
\label{section:D22Xh}

It is a classic result of Ingham \cite{Ingham1927} from 1927 that, as $X\to \infty$,
\begin{equation} \label{eq:Ingham137}
	D_{2,2}(X,h) 
	\sim 
	\frac{6}{\pi^2} \sum_{d \mid h} \frac{1}{d}
	X \log^2 X.
\end{equation}
A little more than half-century latter, Heath-Brown \cite[Theorem 2]{HeathBrown1979} in 1979 refined Ingham's asymptotic to an equality with all lower order terms of the form
\begin{equation} \label{eq:D22209}
	D_{2,2} (X,h)
	= 
	m(X, h) + E(X,h),
\end{equation}
where
\begin{equation} \label{eq:mXh135}
	m(X,h)
	=
	\sum_{i=0}^2
	c_i(h) X \log^i X,
\end{equation}
and, for any $\epsilon>0$,
\begin{equation}
	E(X,h)
	\ll 
	X^{5/6+\epsilon},\ ( h  \le X^{5/6}),
\end{equation}
for some absolute constants $c_i(h)$. In this last section, we apply the procedure in Section \ref{section:compositeh} to refine \eqref{eq:D22209} by explicitly computing the three constants $c_i(h)$ from our $M_{2,2}(X,h)$, in particular, recovering the asymptotic \eqref{eq:Ingham137}. We also discuss the behavior of the error term $E_{2,2}(X,1)$, showing that it exhibits square root cancellation, supported by numerical evidence.

Fortunately, when $k=\ell=2$, the bound \eqref{eq:412} is known unconditionally, with an error term of size $\ll X^{\frac{1}{2}+ \frac{1}{3} + \epsilon} = O(X^{5/6+\epsilon})$.
\begin{theorem} \label{lemma:tau2/3230}
	Let $\epsilon>0$. Then, we have, uniformly for $1\le q \le X^{2/3}$,
	\begin{equation}
		\Delta(X,q,h)
		\ll X^{1/3+\epsilon}.
	\end{equation}
\end{theorem}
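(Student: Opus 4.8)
The plan is to recognise $\Delta(X,q,h)$ as the error term in the divisor problem restricted to an arithmetic progression, to isolate its main part by Dirichlet's hyperbola method, and to bound the resulting oscillatory sums by combining van der Corput's method with Weil's bound for Kloosterman sums; the range $q\le X^{2/3}$ turns out to be exactly the one in which these two contributions each remain of size $X^{1/3}$. Throughout write $e(t)=e^{2\pi i t}$, $\psi(t)=\{t\}-\tfrac12$, and let $\overline{d}$ denote an inverse of $d$ modulo $q$.

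First I would reduce to the coprime case $(h,q)=1$. Writing $\delta=(h,q)$, $q=\delta q_1$ and $n=\delta m$, the inner sum becomes $\sum_{m\le X/\delta,\ m\equiv h/\delta\ (\mathrm{mod}\ q_1)}\tau(\delta m)$ with $(h/\delta,q_1)=1$, and since $\sum_m\tau(\delta m)m^{-s}$ equals $\zeta(s)^2$ times a finite Euler product supported on primes dividing $\delta$, this is a routine manipulation that preserves the exponent. So assume $(h,q)=1$, in which case $\Delta(X,q,h)=\#\{(d,e):de\le X,\ de\equiv h\ (\mathrm{mod}\ q)\}-M(X,q)$ for the explicit main term $M(X,q)$, and the congruence $de\equiv h\ (\mathrm{mod}\ q)$ forces $(d,q)=1$ and then pins $e$ to the class $h\overline{d}\ (\mathrm{mod}\ q)$.

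Next, apply Dirichlet's hyperbola method with a parameter $Y$, splitting $\#\{(d,e):de\le X,\ldots\}$ into the part with $d\le Y$ and the part with $d>Y$ (hence $e<X/Y$). Using the exact count $\#\{e\le Z:e\equiv c\ (\mathrm{mod}\ q)\}=Z/q+\tfrac12-c/q-\psi((Z-c)/q)$, the terms $Z/q$ assemble into $M(X,q)$, the terms $\tfrac12-c/q$ and the sawtooth evaluated at the endpoint $Y$ produce pure arithmetic sums $\sum_{n\le N,(n,q)=1}\psi(\beta - h\overline{n}/q)$, and the remaining sawtooths produce mixed sums $S:=\sum_{n\le N,(n,q)=1}\psi\!\big(X/(nq)-h\overline{n}/q\big)$ with $N\ll\max(Y,X/Y)$. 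The pure arithmetic sums are easily $\ll_\epsilon X^\epsilon(N/q+1)$ by expanding $\psi$ in a Fourier series of length about $q$ and evaluating the resulting Ramanujan sums; taking $Y$ of size about $X^{1/3}$ this is $\ll X^{1/3+\epsilon}$ when $q\le X^{2/3}$, so the crux is the bound for $S$.

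To bound $S$ I would Fourier–expand the sawtooth, $\psi(t)=\sum_{1\le|m|\le M}\frac{-1}{2\pi i m}e(mt)+O\!\big(\min(1,1/(M\|t\|))\big)$, reducing $S$ to the exponential sums $\sum_{n\le N,(n,q)=1}e\!\big(mX/(nq)\big)\,e\!\big(-mh\overline{n}/q\big)$ plus a tail of size $\ll N/M$. These are precisely the sums produced by the Voronoi summation formula for $\tau$ twisted by the additive character $e(hn/q)$, which expresses them through Bessel transforms and Kloosterman sums $\mathrm{Kl}(mh,\,\cdot\,;q)$. Bounding the Kloosterman sums by Weil ($\ll (mh,q)^{1/2}q^{1/2+\epsilon}$) and the Bessel/smooth transforms by van der Corput applied to the phase $mX/(nq)$ — whose second derivative on a progression modulo $q$ is of order $mXq/N^{3}$ — then summing over $1\le m\le M$ and optimising $M$ and the hyperbola parameter $Y$, one arrives at $\Delta(X,q,h)\ll_\epsilon X^\epsilon\big((X/q)^{1/3}+q^{1/2}\big)$, which is $\ll X^{1/3+\epsilon}$ throughout $1\le q\le X^{2/3}$. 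The hard part is keeping full uniformity in $q$ in this last step — the interplay of van der Corput (for the smooth oscillation) with the Weil bound (for the Kloosterman twist), i.e. the $\tau$-in-progressions Voronoi formula with explicit $q$-dependence — and it is exactly this that limits the range to $q\le X^{2/3}$; alternatively one may simply invoke the classical estimate $\Delta(x;q,a)\ll_\epsilon x^\epsilon\big((x/q)^{1/3}+q^{1/2}\big)$ for $(a,q)=1$ together with the reduction above.
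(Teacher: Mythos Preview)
The paper does not actually prove this statement: it simply records that this is a classical (originally unpublished) result of Selberg, Hooley, and others from the 1950s and points to \cite[Corollary~1, p.~409]{HeathBrown1979} for a formal proof. So there is no ``paper's own proof'' to compare against beyond a citation.

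Your sketch is, in outline, exactly the classical argument that lies behind that citation: reduce to $(h,q)=1$, apply the Dirichlet hyperbola, Fourier-expand the sawtooth, and control the resulting incomplete Kloosterman-type sums by the Weil bound, arriving at the well-known estimate $\Delta(X;q,h)\ll_\epsilon X^\epsilon\big((X/q)^{1/3}+q^{1/2}\big)$, which is $\ll X^{1/3+\epsilon}$ precisely in the range $q\le X^{2/3}$. This is the same route taken in Heath-Brown's paper (and in Hooley's original treatment), so you are not proposing a genuinely different method --- you are reconstructing the cited proof. A couple of your intermediate steps are a bit loosely phrased (the invocation of ``Voronoi'' and the simultaneous use of van der Corput on the smooth phase are more narrative than necessary; in the end everything reduces to incomplete Kloosterman sums and Weil), but the structure and the final estimate are correct, and your closing sentence --- that one may simply invoke the classical bound $\Delta(x;q,a)\ll_\epsilon x^\epsilon((x/q)^{1/3}+q^{1/2})$ together with the coprime reduction --- is precisely what the paper does.
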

\begin{proof}
	This is a classic unpublished result of Selberg, Hooley, and others all from the mid 1950's. A formal proof can be found in \cite[Corollary 1, pg. 409]{HeathBrown1979}.
\end{proof}
While only a level of distribution $1/2$ for $\tau(n)$ is needed to prove \eqref{eq:D22209}, Theorem \ref{lemma:tau2/3230} gives that the divisor function is actually well distributed in arithmetic progressions to a higher level of $2/3$. Using Theorem \ref{lemma:tau2/3230}, we derive in this last section the following unconditional

\setcounter{thm}{3}
\begin{thm} \label{theorem:D22Xh}
	Let $\epsilon>0$. We have, uniformly for all $1\le h\le X^{1/2}$, the asymptotic equality
	\begin{equation} \label{eq:D22Xh}
		\sum_{n\le X}
		\tau(n) \tau(n+h)
		=
		M_{2,2}(X,h)
		+ 
		E_{2,2}(X,h),
	\end{equation}
	where
	\begin{equation} \label{eq:M22Xh}
		M_{2,2}(X,h)
		=X \left(
		c_2(h) \log^2 X
		+ c_1(h) \log X
		+ c_0(h)
		\right),
	\end{equation}
	with
	\begin{equation}
		c_2(h)
		=
		\frac{6}{\pi^2} \sum_{d \mid h} \frac{1}{d},
	\end{equation}
	\begin{equation}
		c_1(h)
		=
		(4 \gamma - 2) f_h(1,0)
		+
		2 f_h^{(0,1)}(1,0)+f_h^{(1,0)}(1,0),
	\end{equation}
	and
	\begin{align}
		c_0(h)
		&=
		2 \left(-f_h^{(0,1)}(1,0)+\gamma  \left(2 f_h^{(0,1)}(1,0)+f_h^{(1,0)}(1,0)-f_h(1,0)\right)+f_h^{(1,1)}(1,0)+2 \gamma ^2 f_h(1,0)\right)
		\\&  \quad
		+ 
		f_h^{(1,0)}(1,0)
		+2 (\gamma -1) f_h(1,0),
	\end{align}
	with the constants $f_h$, $f_h^{(0,1)}$, $f_h^{(1,0)}$, and $f_h^{(1,1)}$ at $(1,0)$ depending only on $h$ given in Lemmas \ref{lemma:fh10} and \ref{lemma:fhDerivatives} below, and with the error term satisfying
	\begin{equation} \label{eq:E22unconditional}
		E_{2,2}(X,h)
		\ll
		X^{5/6+\epsilon}.
	\end{equation}
\end{thm}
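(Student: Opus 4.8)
The plan is to specialise the machinery of Section~\ref{section:compositeh} to $k=\ell=2$, keeping \emph{all} terms down to the constant and exploiting that for the ordinary divisor function Hooley's decomposition is an exact identity. For $n\le X$ every factorisation $n=\ell_1\ell_2$ has $\min(\ell_1,\ell_2)\le X^{1/2}$, so Lemma~\ref{lemma:keyDecompositionk} with $k=2$ reads $\tau(n)=2\Sigma_2(n)-E(n)$ \emph{exactly}, where $\Sigma_2(n)=\#\{\ell_1\ell_2=n:\ell_1\le X^{1/2}\}$ and $E(n)=\#\{\ell_1\ell_2=n:\ell_1,\ell_2\le X^{1/2}\}$. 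Substituting into $D_{2,2}(X,h)=\sum_{n\le X}\tau(n)\tau(n+h)$ and unfolding the $\ell_2$-sum into a residue class, exactly as in \eqref{eq:426}--\eqref{eq:1106b}, gives $D_{2,2}(X,h)=2T_1-T_2$ with
\[
T_1=\sum_{\ell\le X^{1/2}}\ \sum_{\substack{m\le X+h\\ m\equiv h\,(\ell)}}\tau(m),\qquad
T_2=\sum_{\ell\le X^{1/2}}\ \sum_{\substack{m\le \ell X^{1/2}+h\\ m\equiv h\,(\ell)}}\tau(m).
\]
Unlike the range $k\ge3$, here the ``error sum'' $T_2$ is itself of order $X\log X$, so it must be carried along; it will feed $c_1(h)$ and $c_0(h)$ but not $c_2(h)$.

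Next I would insert the level of distribution for $\tau$. Writing $\delta=(h,\ell)$, $\ell_1=\ell/\delta$, $h=\delta h_1$ with $(h_1,\ell_1)=1$, the congruence $m\equiv h\,(\ell)$ forces $\delta\mid m$; setting $m=\delta n$ turns each inner sum into $\sum_{n\equiv h_1(\ell_1)}\tau(\delta n)$, whose main term is the principal-character contribution. Since $\ell\le X^{1/2}\le X^{2/3}$, Theorem~\ref{lemma:tau2/3230} yields, uniformly for $Y\le 2X$,
\[
\sum_{\substack{m\le Y\\ m\equiv h\,(\ell)}}\tau(m)=\underset{\substack{s=1}}{\mathrm{Res}}\left(\frac{(Y/\delta)^s}{s}\,\frac1{\varphi(\ell_1)}\sum_{(n,\ell_1)=1}\frac{\tau(n\delta)}{n^s}\right)+O_\epsilon\!\big(X^{1/3+\epsilon}\big),
\]
the shape already appearing in \eqref{eq:h=prime} for $k=\ell=2$. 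Summing the error over $\ell\le X^{1/2}$ gives $\ll X^{1/2}\cdot X^{1/3+\epsilon}=X^{5/6+\epsilon}$ for $T_1$, and $\ll\sum_{\ell\le X^{1/2}}(\ell X^{1/2})^{1/3+\epsilon}\ll X^{5/6+\epsilon}$ for $T_2$; hence $E_{2,2}(X,h)\ll_\epsilon X^{5/6+\epsilon}$ once the remaining main-term manipulations are seen to cost only power savings.

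It remains to evaluate the two main-term sums. Applying Perron's formula in the single variable $\ell$, as in \eqref{eq:Dk1040}, and using multiplicativity together with Lemmas~\ref{lemma:tauk(nh)} and \ref{lemma:tauknhcoprime} to assemble the Dirichlet series in a variable $w$, I would factor out the polar part exactly as in \eqref{eq:Tk1212}: the generating function becomes $\zeta^2(s)\,\zeta(w+1)\,f_h(s,w)$ with $f_h$ an absolutely convergent Euler product near $(s,w)=(1,0)$, whose local factor at $p\nmid h$ equals $(1+p^{-1})(1-p^{-w-1})(1-p^{-s})^2/(1-p^{-1})$-type and reduces at $(1,0)$ to $1-p^{-2}$. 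Both $T_1$ and $T_2$ then become residues at the \emph{double} pole $s=1$ and the double pole $w=0$ (the latter from the Perron $1/w$), with $(X+h)^s$, resp.\ a repackaging of $(\ell X^{1/2})^s$ and $X^{w/2}$, as the relevant powers; since $h\le X^{1/2}$ one replaces $\log(X+h)$ by $\log X$ at a cost $O(X^{1/2}\log X)$. Expanding $\zeta^2(s)=(s-1)^{-2}+2\gamma(s-1)^{-1}+\cdots$ and $\zeta(w+1)/w=w^{-2}+\gamma w^{-1}+\cdots$ (note that, the poles being only double, $\gamma_1$ never enters, which is precisely why $c_0(h),c_1(h)$ involve only $\gamma$), collecting the coefficients of $\log^2X,\log X,1$, and combining $2T_1-T_2$, produces $M_{2,2}(X,h)=X(c_2(h)\log^2X+c_1(h)\log X+c_0(h))$ with the $c_i(h)$ expressed via $f_h(1,0)$ and $f_h^{(1,0)},f_h^{(0,1)},f_h^{(1,1)}$ at $(1,0)$; a direct local computation of these quantities — the content of Lemmas~\ref{lemma:fh10} and \ref{lemma:fhDerivatives} — then gives the stated closed forms, and in particular $c_2(h)=C_{2,2}f_{2,2}(h)=\frac{6}{\pi^2}\sum_{d\mid h}1/d$, recovering Ingham's asymptotic \eqref{eq:Ingham137}.

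The main obstacle is this final step: tracking the combination $2T_1-T_2$ through a double-times-double residue \emph{uniformly in $h$}. Because $T_2$ on its own carries a spurious $X\log^2X$ term, one must verify that the $\log^2X$ contributions of $2T_1$ and $T_2$ cancel down to the single coefficient $c_2(h)$, and that all $h$-dependence is captured by the one Euler product $f_h(s,w)$ for $h\le X^{1/2}$; the bookkeeping of the $\gamma$-terms and of the first partial derivatives of $f_h$ is elementary but unforgiving, and supplying the local Lemmas~\ref{lemma:fh10}--\ref{lemma:fhDerivatives} is where the real work lies. Everything else — the exact Hooley identity, the input from Theorem~\ref{lemma:tau2/3230}, and the Perron step — proceeds exactly as in Sections~\ref{section:h=1} and \ref{section:compositeh}.
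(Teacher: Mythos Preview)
Your proposal is correct and follows essentially the same route as the paper: the exact Hooley identity $\tau(n)=2\Sigma_2(n)-E(n)$, the level-of-distribution input from Theorem~\ref{lemma:tau2/3230}, Perron in the $\ell$-variable to produce the factorisation $\zeta^2(s)\zeta(w+1)f_h(s,w)$, and extraction of the two residues (at $(s,w)=(1,0)$ for $T_1$ and, after the shift $w\mapsto w-s$, at $(s,w)=(1,1)$ for $T_2$), followed by the local computations of Lemmas~\ref{lemma:fh10}--\ref{lemma:fhDerivatives}. One small correction to your self-diagnosis of the obstacle: $T_2$ does \emph{not} carry a spurious $X\log^2 X$ term --- the paper's residue \eqref{eq:2ndresidue} confirms your earlier (correct) remark that $T_2$ is of order $X\log X$ and contributes only to $c_1(h)$ and $c_0(h)$, so no cancellation at the $\log^2 X$ level is required and the full coefficient $c_2(h)=f_h(1,0)$ comes entirely from $2T_1$.
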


\begin{proof}
	From \eqref{eq:Dk138} with \eqref{eq:Ckl2}, \eqref{eq:fklh}, \eqref{eq:Ap141}, \eqref{eq:Bp1034}, $k=\ell=2$, and by Lemma \ref{lemma:tau2/3230}, we have
	\begin{align} \label{eq:426pm}
		D_{2,2}(X,h)
		&=
		2\underset{\substack{s=1\\ w=0}}{\mathrm{Res}}
		\left(
		\frac{X^{\frac{1}{2}w} (X+h)^s}{w s}
		\sum_{n=1}^\infty
		\frac{F_h(s; n)}{n^{w}}
		\right)
		\\&
		-
		\underset{\substack{s=1\\ w=1}}{\mathrm{Res}}
		\left(
		\frac{X^{ \frac{1}{2}w }}{w s}
		\sum_{n=1}^\infty
		\frac{F_h(s; n) (nX^{1/2} + h)^{s}}{n^{w} }
		\right)
		+ O\left( X^{5/6 + \epsilon}  \right),
	\end{align}
	where
	\begin{equation}
		F_h(s; n)
		=
		\frac{1}{\varphi\left( \frac{n}{(h, n)} \right)}
		\sum_{ \substack{\ell =1\\ \left( \ell, \frac{n}{(h, n) =1 } \right) = 1} }^\infty
		\frac{\tau(\ell (h, n))}{(\ell (h, n) )^s}.
	\end{equation}
	By multiplicativity and Euler products, we have, from \eqref{eq:Ap141}, \eqref{eq:1023}, \eqref{eq:Bp1034}, and \eqref{eq:BBp541}, with $k=\ell=2$,
	\begin{equation} \label{eq:Fh435}
		\sum_{n=1}^\infty
		\frac{F_h(s; n)}{n^{w}}
		=
		\zeta^2(s) \zeta(w+1)
		f_h(s;w),
	\end{equation}
	where
	\begin{equation} \label{eq:fh550}
		f_h(s;w)
		=
		\prod_{p \mid h}
		\frac{A_p(s; w; h)}{B_p(s;w)}
		\prod_p BB_p(s;w)
	\end{equation}
	with
	\begin{align} \label{eq:Ap442}
		A_p(s;w;h)
		=
		\zeta^2(s)
		&\left(
		1
		+
		\frac{  2p (p-1) \left(p^{\nu_p(h)}-1\right)-\nu_p(h) (p-1)^2 }{ p^{\nu_p(h)+1}(p-1)^2}
		\right.
		\\& \left. \quad
		+
		(\nu_p(h)+1)
		\frac{\left(1-\frac{1}{p^s}\right)^2}{1-\frac{1}{p}}
		\frac{p^{-\nu_p(h) (s+w)}}{p^{w+1} - 1}
		\right),
	\end{align}
	\begin{equation} \label{eq:Bp442}
		B_p(s;w)
		=
		\zeta^2(s)
		\left(
		1
		+
		\frac{\left(1-\frac{1}{p^s}\right)^2}{1-\frac{1}{p}} \frac{1}{p^{w+1}-1}
		\right)
	\end{equation}
	and
	\begin{equation} \label{eq:BBp638}
		BB_p(s;w)
		=
		\left( 1- \frac{1}{p^{w+1}} \right)
		\left(
		1
		+
		\frac{\left(1-\frac{1}{p^s}\right)^2}{1-\frac{1}{p}} \frac{1}{p^{w+1}-1}
		\right),
	\end{equation}
	with $f_h(s;w)$ converging in a wider region. Hence, by \eqref{eq:Fh435}, \eqref{eq:426pm} becomes
	\begin{align} \label{eq:427530}
		D_{2,2}(X,h)
		&=
		2\underset{\substack{s=1\\ w=0}}{\mathrm{Res}}
		\left(
		\frac{X^{\frac{1}{2}w + s}}{w s}
		\zeta^2(s) \zeta(w+1) f_h(s;w)
		\right)
		\\&-
		\underset{\substack{s=1\\ w=1}}{\mathrm{Res}}
		\left(
		\frac{X^{ \frac{1}{2}(w +s) }}{w s}
		\zeta^2(s) \zeta(w-s+1) f_h(s;w-s)
		\right)
		+ O\left( X^{5/6 + \epsilon}  \right).
	\end{align}
	The first residue of the above is equal to
	\begin{align} \label{eq:1stresidue}
		\frac{1}{2} X 
		&\left(
		f_h(1,0) \log ^2(X)
		+ \left( 2 f_h^{(0,1)}(1,0)+f_h^{(1,0)}(1,0)+(4 \gamma -1) f_h(1,0) \right) \log (X)
		\right.
		\\& \left.
		+
		2 \left(-f_h^{(0,1)}(1,0)+\gamma  \left(2 f_h^{(0,1)}(1,0)+f_h^{(1,0)}(1,0)-f_h(1,0)\right)+f_h^{(1,1)}(1,0)+2 \gamma ^2 f_h(1,0)\right)
		\right)
	\end{align}
	and the second
	\begin{align} \label{eq:2ndresidue}
		X \left(
		f_h(1,0) \log (X)
		+ f_h^{(1,0)}(1,0)
		+2 (\gamma -1) f_h(1,0)\right).
	\end{align}
	Thus, by \eqref{eq:1stresidue} and \eqref{eq:2ndresidue}, \eqref{eq:427530} becomes
	\begin{align}
		D_{2,2}(X,h)
		&=
		X 
		\left(
		f_h(1,0) \log ^2(X)
		+ \left( 
		(4 \gamma - 2) f_h(1,0)
		+ 2 f_h^{(0,1)}(1,0)+f_h^{(1,0)}(1,0) \right) \log (X)
		\right.
		\\& \left.
		+
		2 \left(-f_h^{(0,1)}(1,0)+\gamma  \left(2 f_h^{(0,1)}(1,0)+f_h^{(1,0)}(1,0)-f_h(1,0)\right)+f_h^{(1,1)}(1,0)+2 \gamma ^2 f_h(1,0)\right)
		\right.
		\\& \left. \quad
		+ 
		f_h^{(1,0)}(1,0)
		+2 (\gamma -1) f_h(1,0)
		\right)
		+ O\left(X^{5/6 +\epsilon}\right).
	\end{align}
	It remains to compute the function $f_h$ and its derivatives at $(1,0)$. We do so in the following two lemmas, which will complete the proof of Theorem \ref{theorem:D22Xh}.
	
	\begin{lem} \label{lemma:fh10}
		We have
		\begin{equation} \label{eq:fh10512}
			f_h(1,0)
			=
			\frac{6}{\pi^2}
			\sum_{d \mid h} \frac{1}{d}.
		\end{equation}
	\end{lem}
	\begin{proof}
		By \eqref{eq:fh550}, \eqref{eq:Ckl} and \eqref{eq:Ckl2}, we have
		\begin{align}
			f_h(1,0)
			=
			\prod_{p \mid h} \frac{A_p(1; 0; h)}{B_p(1;0)}
			\prod_{p} BB_p(1;0)
			=
			\prod_{p\mid h}
			\frac{p^{-\nu_p(h)} \left(p^{\nu_p(h)+1}-1\right)}{p-1}
			\prod_p \left( 1- \frac{1}{p^2} \right).
		\end{align}
		But
		\begin{equation}
			\prod_p \left( 1- \frac{1}{p^2} \right)
			= 
			\frac{1}{\zeta(2)}
			=
			\frac{6}{\pi^2},
		\end{equation}
		and
		\begin{equation}
			\prod_{p\mid h}
			\frac{p^{-\nu_p(h)} \left(p^{\nu_p(h)+1}-1\right)}{p-1}
			=
			\prod_{p\mid h}
			\frac{p^{-(\nu_p(h) + 1)} - 1}{p^{-\nu_p(h)} - 1}
			= 
			\sum_{d \mid h} \frac{1}{d},
		\end{equation}
		where the last equality follows from \cite[Theorem 274, pg. 311]{HardyWright1938}. Hence, \eqref{eq:fh10512} follows.
	\end{proof}

	\begin{lem} \label{lemma:fhDerivatives}
		We have the following three estimates
		\begin{align}
			f_h^{(0,1)}(1,0)
			&=
			\frac{6}{\pi^2}
			\sum_{d \mid h} \frac{1}{d}
			\left(
			\sum_p \frac{\log (p)}{p^2-1}
			+
			\sum_{p \mid h}		
			\frac{\left(\nu_p(h) (p-1)-p \left(p^{\nu_p(h)}-1\right)\right) \log (p)}{(p-1) \left(p^{\nu_p(h)+1}-1\right)}
			\right),
		\end{align}
		\begin{align}
			f_h^{(1,0)}(1,0)
			&=
			\frac{6}{\pi^2}
			\sum_{d \mid h} \frac{1}{d}
			\left(
			\sum_p \frac{2\log (p)}{p^2-1}
			-
			\sum_{p\mid h}
			\frac{2 \left(p \left(p^{\nu_p(h)}-1\right)-\nu_p(h) (p)+\nu_p(h)\right) \log (p)}{(p-1) \left(p^{\nu_p(h)+1}-1\right)}
			\right),
		\end{align}
	and
		\begin{align} \label{eq:fh11}
			&f_h^{(1,1)}(1,0)
			=
			\frac{6}{\pi^2}
			\sum_{d \mid h} \frac{1}{d}
			\left(
			\sum_p \frac{\log (p)}{p^2-1}
			+
			\sum_{p \mid h}		
			\frac{\left(\nu_p(h) (p-1)-p \left(p^{\nu_p(h)}-1\right)\right) \log (p)}{(p-1) \left(p^{\nu_p(h)+1}-1\right)}
			\right)
			\\& \quad \times
			\left(
			\sum_p \frac{2\log (p)}{p^2-1}
			-
			\sum_{p\mid h}
			\frac{2 \left(p \left(p^{\nu_p(h)}-1\right)-\nu_p(h) (p)+\nu_p(h)\right) \log (p)}{(p-1) \left(p^{\nu_p(h)+1}-1\right)}
			\right)
			\\& +
			\frac{6}{\pi^2}
			\sum_{d \mid h} \frac{1}{d}
			\left(
			- \sum_p \frac{2 p^2 \log ^2(p)}{\left(p^2-1\right)^2}
			\right.
			\\& \left.
			+
			\prod_{p \mid h}
			\frac{2 p \left(2 \nu_p(h) (h+2) p^{\nu_p(h)+1}-(\nu_p(h)+1)^2 p^{\nu_p(h)+2}+p^{2 \nu_p(h)+2}-(\nu_p(h)+1)^2 p^{\nu_p(h)}+1\right) \log ^2(p)}{(p-1)^2 \left(p^{\nu_p(h)+1}-1\right)^2}
			\right).
		\end{align}	
	\end{lem}
	\begin{proof}
		By \eqref{eq:BBp638}, we have
		\begin{equation} \label{eq:BBp01}
			\sum_p
			\frac{d}{dw} \log BB_p(1,0)
			=
			\sum_p \frac{\log (p)}{p^2-1},
		\end{equation}
		\begin{equation} \label{eq:BBp10}
			\sum_p
			\frac{d}{ds} \log BB_p(1,0)
			=
			\sum_p \frac{2 \log (p)}{p^2-1},
		\end{equation}
		and
		\begin{equation} \label{eq:BBp11}
			\sum_p
			\frac{d^2}{dsdw} \log BB_p(1,0)
			=
			- \sum_p \frac{2 p^2 \log ^2(p)}{\left(p^2-1\right)^2}.
		\end{equation}
		Thus, by \eqref{eq:fh550}, \eqref{eq:fh10512}, \eqref{eq:BBp01}, \eqref{eq:BBp10}, and \eqref{eq:BBp11}, we get
		\begin{align}
			f_h^{(0,1)}(1,0)
			&= 
			f_h(1,0)
			\frac{d}{dw} \log f_h(s,w) |_{(s,w) = (1,0)}
			\\&=
			f_h(1,0)
			\left(
			\sum_p \frac{d}{dw} \log BB_p(s;w)
			+
			\sum_{p \mid h}
			\frac{d}{dw} \log \frac{A_p(s; w; h)}{B_p(s;w)}
			\right)_{(s,w) = (1,0)}
			\\&
			=
			\frac{6}{\pi^2}
			\sum_{d \mid h} \frac{1}{d}
			\left(
			\sum_p \frac{\log (p)}{p^2-1}
			+
			\sum_{p \mid h}		
			\frac{\left(\nu_p(h) (p-1)-p \left(p^{\nu_p(h)}-1\right)\right) \log (p)}{(p-1) \left(p^{\nu_p(h)+1}-1\right)}
			\right),
		\end{align}
		\begin{align}
			f_h^{(1,0)}(1,0)
			&= 
			f_h(1,0)
			\frac{d}{ds} \log f_h(s,w) |_{(s,w) = (1,0)}
			\\&=
			f_h(1,0)
			\left(
			\sum_p \frac{d}{ds} \log BB_p(s;w)
			+
			\sum_{p \mid h}
			\frac{d}{ds} \log \frac{A_p(s; w; h)}{B_p(s;w)}
			\right)_{(s,w) = (1,0)}
			\\&
			=
			\frac{6}{\pi^2}
			\sum_{d \mid h} \frac{1}{d}
			\left(
			\sum_p \frac{2\log (p)}{p^2-1}
			-
			\sum_{p\mid h}
			\frac{2 \left(p \left(p^{\nu_p(h)}-1\right)-\nu_p(h) (p)+\nu_p(h)\right) \log (p)}{(p-1) \left(p^{\nu_p(h)+1}-1\right)}
			\right),
		\end{align}
		and
		\begin{align}
			&f_h^{(1,1)}(1,0)
			= 
			\frac{d}{dw} f_h^{(1,0)}(s,w)|_{(s,w)=(1,0)}
			=
			\frac{d}{dw}
			\left(
			f_h(s,w)
			\frac{d}{ds} \log f_h(s,w) \right)_{(s,w) = (1,0)}
			\\&
			=
			\left(
			f_h^{(0,1)}(s,w) \frac{d}{ds} \log f_h(s,w) 
			+
			f_h(s,w)
			\frac{d^2}{dsdw} \log f_h(s,w)
			\right)_{(s,w) = (1,0)},
		\end{align}
		which gives the right side of \eqref{eq:fh11}.
	\end{proof}
	This completes the proof of Theorem \ref{theorem:D22Xh}.
\end{proof}

In particular, we have the following consequence to Theorem \ref{theorem:D22Xh} for $h=1$.
\setcounter{cor}{1}
\begin{cor}
	We have, for any $\epsilon>0$, with at least 148 digits accuracy in the coefficients,
	\begin{align}
		&M_{2,2}(X,1)
		=
		X \left(\frac{6}{\pi ^2}\log ^2(X)
		\right.
		\\& \left. 
		+1.5737449203324910789070569280484417010544014980534581993991047787172106559673
		\right.
		\\& \left. \quad
		1173018329789033856157663793482022187619702084359231966550508901828044158 \log (X)
		\right.
		\\& \left. 
		-0.5243838319228249988207213304174247109766097340170991428485246582967458363611
		\right.
		\\& \left. \quad
		4606090215515124475866524185215534024889460792901985996741204565400064583\right)
		+ O(X^\epsilon).
	\end{align}
\end{cor}
\begin{proof}
	We have
	\begin{align}
		\sum_p &\frac{\log (p)}{p^2-1}
		\\& \approx
		0.569960993094532806399864360019730002403482280806930979558125010990350610050
	\end{align}
	and
	\begin{align}
		\sum_p &\frac{p^2 \log^2(p)}{(p^2-1)^2}
		\\&\approx
		0.884481833963523885196536153870651168588667332638711335184294712832630231963.
	\end{align}
	When $h=1$, \eqref{eq:fh550} reduces to
	\begin{equation}
		f_1(s;w)
		=
		\prod_p BB_p(s;w)
	\end{equation}
	and there is no local factor. Hence, the estimates in Lemmas \ref{lemma:fh10} and \ref{lemma:fhDerivatives} simplify to
	\begin{equation}
		f_1(1,0)
		=
		\frac{6}{\pi^2},
	\end{equation}
	\begin{align}
		&f_1^{(0,1)}(0,1)
		=
		\frac{6}{\pi^2}
		\sum_p \frac{\log (p)}{p^2-1}
		\\&
		\approx 
		0.346494734701802213346160816867709151548899264204041698651043406973780662935,
	\end{align}
	\begin{align}
		&f_1^{(1,0)}(0,1)
		= 2 f_1^{(0,1)}(0,1)
		\\&
		\approx
		0.692989469403604426692321633735418303097798528408083397302086813947561325869,
	\end{align}
	and
	\begin{align}
		&f_1^{(1,1)}(0,1)
		=
		\frac{12}{\pi^2} \left(
		\left(\sum_p \frac{\log (p)}{p^2-1}\right)^2
		-
		\sum_p \frac{p^2 \log^2(p)}{(p^2-1)^2}
		\right)
		\\&
		\approx
		-0.68042398974262717192610795266802886217030580133549111824673457509413466415.
	\end{align}
	Hence, with the four estimates above, \eqref{eq:M22Xh} simplifies to give \eqref{eq:D22X1}.
\end{proof}

The error term $E_{2,2}(X,1) = D_{2,2}(X,1) - M_{2,2}(X,1)$ is plotted in Figure \ref{figure:D22X1}, showing a fluctuating behavior, but seems to be bounded by a constant times a fractional power of $X$. In Figure \ref{figure:D22X1LogLogPlot}, a log-log-plot of the error term $E_{2,2}(X,1)$ is graphed to numerically determine the constants $\alpha$ and $C$ such that $|E_{2,2}(X,1)| \le C X^\alpha$.
\begin{figure}
	\includegraphics[scale=1]{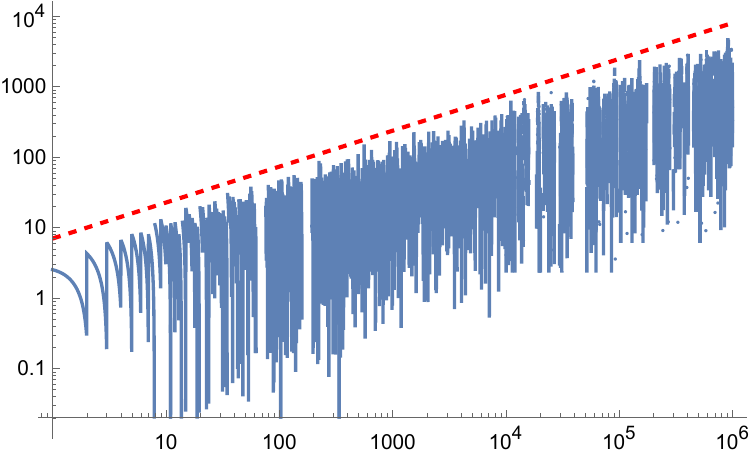}
	\caption{A log-log-plot of the error term $E_{2,2}(X,1)$, for $1\le X\le 10^6$, with slope of dashed line approximately 0.51 and $y$-intercept around 7, which numerically suggests that $|E_{2,2}(X,1)| \le 7 X^{0.51}$.}
	\label{figure:D22X1LogLogPlot}
\end{figure}
This is simply because, if we took log's of both sides of this equation, then the exponent $\alpha$ is equal to the slope and $C$ is given by the $y$-intercept of this straight line. Thus, from Figure \ref{figure:D22X1LogLogPlot}, pick two best points we compute $\alpha \approx 0.51$ and $C\approx 7$.
This suggests that
\begin{equation} \label{eq:E22X1}
	|E_{2,2}(X,1)| \le 7  X^{0.51},
\end{equation}
which, in particular, is much sharper than \eqref{eq:E22unconditional}. Therefore, \eqref{eq:E22X1} shows that the corresponding error term exhibits square-root cancellation, which
provides numerical evidence to support  Conjecture \ref{conj:LevelofDistribution}.

\section*{Acknowledgments}

I am very grateful to Brian Conrey for his suggestion to investigate the shifted convolution $\tau_3(n)\tau_3(n+1)$ and for helpful conversations, in particular, pointing my attention to \cite{BaluyotConrey2022}, and for thorough reading of Section \ref{section:ConreyGonek}. Many thanks to Nathan Ng and Brad Rodgers for useful comments and suggestions. Special thanks to Siegfred Baluyot for sending \cite{BaluyotConrey2022}, from which \eqref{eq:G3} appears. I also benefited from Mathematica code from V. Kotesovec in an OEIS comment (entry \href{https://oeis.org/A256392}{\path{A256392}}), which permits arbitrary precision in computing sums and products over primes. My gratitude also goes to everyone at AIM for great environment.

\section*{Appendix: Proof of Corollary \ref{cor:mypolynomialM33}}
\label{section:appendix}

\includepdf[pages=-]{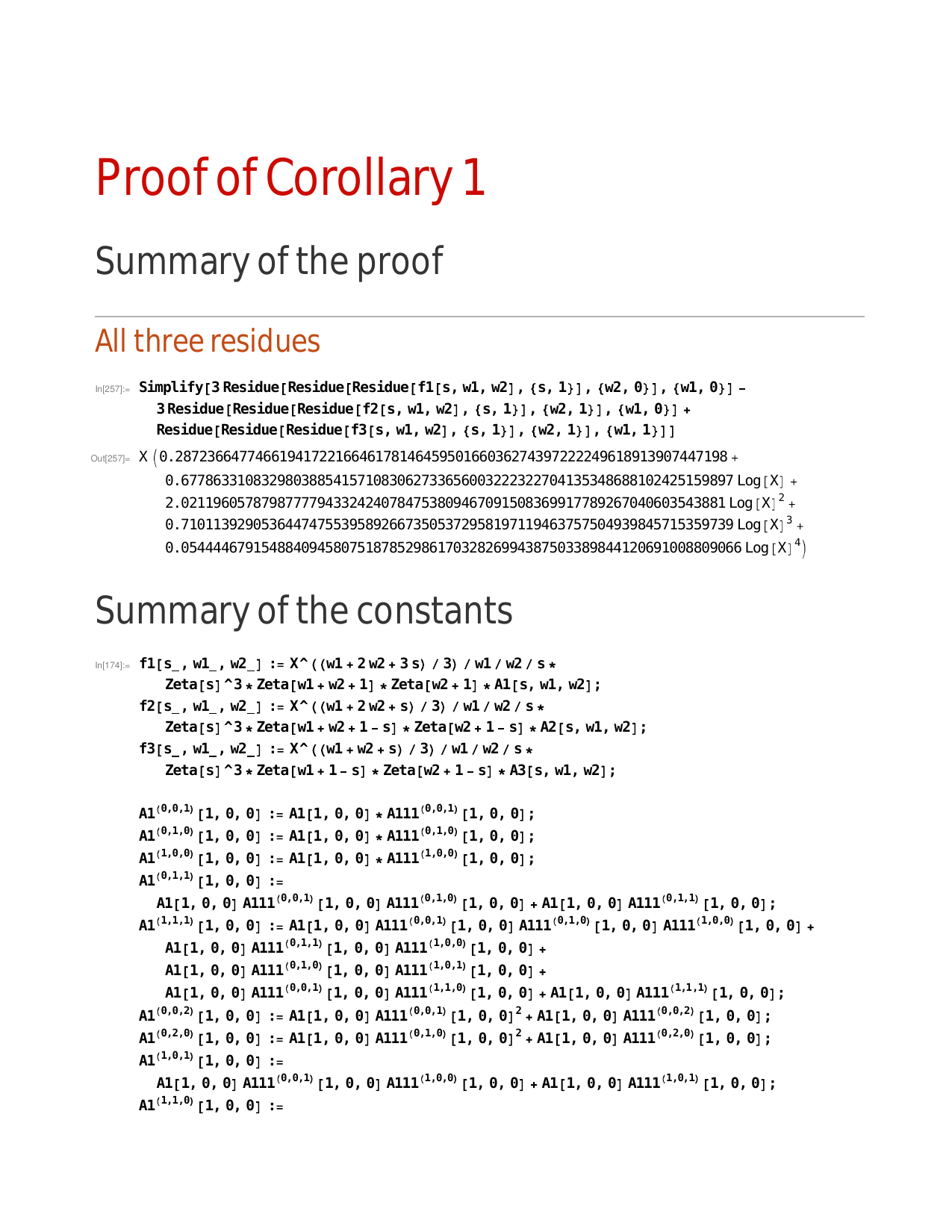}

\end{document}